\newtheorem{thm}{Theorem}[section]
\newtheorem{defi}[thm]{Definition}
\newtheorem{lem}[thm]{Lemma}
\newtheorem{prop}[thm]{Proposition}
\newtheorem{remark}[thm]{Remark}
\newtheorem{rk}[thm]{Remark}
\theoremstyle{definition}
\theoremstyle{remark}
\numberwithin{equation}{section}
\newcommand{\R}{{\mathbb R}}
\newcommand{\N}{{\mathbb N}}
\newcommand{\Z}{{\mathbb Z}}
\newcommand{\C}{{\mathbb C}}
\newcommand{\M}{{\mathcal M}}
\newcommand{\bs}{\begin{split}}
\newcommand{\es}{\end{split}}
\theoremstyle{plain}
\newtheorem*{Cuculescutheo}{Cuculescu's construction \cite{Cuc}}
\newcommand{\be}{\begin{eqnarray*}}
\newcommand{\ee}{\end{eqnarray*}}
\newcommand{\beq}{\begin{align}}
\newcommand{\eeq}{\end{align}}
\def\Q{\mathcal{Q}}
\def\1{\mathbf{1}}
\def\lc{\lesssim}
\def\tet{\theta}
\begin{document}

%
%
%
%
%
%
%
%
\setcounter{page}{1}
\title[Maximal singular integral operators]
{Maximal singular integral operators acting on  noncommutative $L_p$-spaces}



\author[G. Hong]{Guixiang Hong$^{*}$}
\address{
School of Mathematics and Statistics\\
Wuhan University\\
Wuhan 430072\\
China}

\email{guixiang.hong@whu.edu.cn}
\author[X. Lai]{Xudong Lai}
\address{
Institute for Advanced Study in Mathematics, Harbin Institute of Technology \\
Harbin 150001\\
China}

\email{xudonglai@hit.edu.cn\ xudonglai@mail.bnu.edu.cn}
\author[B. Xu]{Bang Xu}
\address{
School of Mathematics and Statistics\\
Wuhan University\\
Wuhan 430072\\
China}

\email{bangxu@whu.edu.cn}

\thanks{G. Hong and B. Xu were supported by National Natural Science Foundation of China (No. 11601396). X. Lai was supported by China Postdoctoral Science Foundation (Nos. 2017M621253, 2018T110279), National Natural Science Foundation of China (No. 11801118) and  Fundamental Research Funds for the Central Universities.}

\subjclass[2010]{Primary 46L52; Secondary 42B20,  46L53}
\keywords{Noncommutative Calder{\'o}n-Zygmund decomposition,  Vector-valued noncommutative $L_{p}$-spaces, Maximal inequalities, Singular integral, Weak $(1,1)$}

\date{\today
\newline \indent $^{*}$Corresponding author}
\begin{abstract}
In this paper, we study the boundedness theory for maximal Calder\'on-Zygmund operators acting on noncommutative $L_p$-spaces. Our first result is a criterion for the weak type $(1,1)$ estimate of noncommutative maximal Calder\'on-Zygmund operators; as an application, we obtain the weak type $(1,1)$ estimates of operator-valued maximal singular integrals of convolution type under proper {regularity} conditions. These are the {\it first} noncommutative maximal inequalities for families of truly non-positive linear operators.
 For homogeneous singular integrals, the strong type $(p,p)$ ($1<p<\infty$) maximal estimates are shown to be true even for {rough} kernels.

 As a byproduct of the criterion, we obtain the noncommutative weak type $(1,1)$ estimate for Calder\'on-Zygmund operators with integral regularity condition that is slightly stronger than the H\"ormander condition; this evidences somewhat an affirmative answer to an open question in the noncommutative Calder\'on-Zygmund theory.

\end{abstract}

\maketitle

\section{Introduction and main results}

Motivated by quantum mechanics, operator algebra, noncommutative geometry and quantum probability, noncommutative harmonic analysis gains rapid development recently and there are many fundamental works appeared (see e.g. \cite{JMX, M, Par07, CXY13, XXY, XXX, JMP1, JMP2, GJP1, GJP2, Junge-Mei1,Junge-Mei2, Junge-Mei-Parcet-Xia, de la salle-Parcet-Ricard, HWW}). Due to the noncommutativity, many real variable tools or methods such as maximal functions, stopping times \textit{etc} are not available, which impose numerous difficulties in developing noncommutative theory. The semi-commutative (or operator-valued) harmonic analysis seems to be the \textit{easiest} noncommutative theory, but it requires revolutionary ideas, insights or techniques. Moreover, together with various transference techniques (see e.g. \cite{Neuwirth-Ricard, CXY13, Junge-Mei-Parcet-Xia, de la salle-Parcet-Ricard, HLW} and references therein), it has many exciting applications in other research fields or plays important roles in the more sophisticated noncommutative setting where the explicit expressions or the estimates of the kernels are absent.
For instance, motivated by the theory of noncommutative martingales \cite{PX1, JX1, JX2}, Mei \cite{M} developed systematically the theory of operator-valued Hardy spaces and BMO spaces which incidentally solved an open question in matrix-valued harmonic analysis arising from prediction theory. Mei's theory is also used to develop harmonic analysis on group von Neumann algebras \cite{JMP1, Junge-Mei-Parcet-Xia, de la salle-Parcet-Ricard} and quantum tori (or quantum Euclidean space) \cite{CXY13, XXY, XXX, GJP2}. Motivated by Cuculescu's maximal weak (1,1) estimate for noncommutative martingales \cite{Cuc}, Parcet \cite{JP1} formulated a kind of noncommutative Calder\'on-Zygmund decomposition and established the weak type (1,1) estimate for the operator-valued Calder\'on-Zygmund singular integrals, which finds its unexpected application in the complete resolution of the Nazarov-Peller conjecture arising from the perturbation theory \cite{CPSZ}. For more related results on weak type $(1,1)$ estimates and noncommutative Calder\'on-Zygmund decomposition we refer the reader to \cite{C,HLMP,HX,Lai,MP}.

\medskip

In the present paper, we study  the semi-commutative Calder\'on-Zygmund theory but focus on the {\it maximal} singular integral operators. In the commutative case, the maximal function or operator $Mf = \sup_n |T_nf|$ for a sequence of linear operators $(T_n)_n$ acting on some function $f$  is an important instrumental tool in the real variable theory, and is the main tool to obtain the pointwise convergence result. But it usually requires much more ideas or tools in estimating maximal inequalities $Mf$ (resp. pointwise convergence) than estimating inequalities of $T_nf$ uniformly (resp. norm convergence). For instance, the norm convergence of the Dirichlet series is equivalent to the boundedness of Hilbert transform; but the corresponding pointwise convergence is guaranteed by the famous Carleson's maximal theorem which was obtained around 40 years later. In the noncommutative setting, since the maximal function is not available any more (see \cite{JX1}), the maximal inequality is much more difficult to get. The first two non-trivial maximal inequalities go back 70's in the last century, that are,  Yeadon's maximal weak type (1,1) estimate for ergodic averages (see \cite{Ye}) and Cuculescu's one for conditional expectations mentioned previously. However, it took around 30 years to obtain the noncommutative $L_p$-maximal inequalities along the line of Cuculescu and Yeadon's theorems; the formulation of $L_p$-maximal inequalities was not possible until the appearance of Pisier's vector-valued noncommutative $L_p$-spaces (see \cite{pisier}) which required the full strength of the operator space theory. Indeed, the $L_p(\Omega)$-norm of the maximal function $Mf$ must be understood as the $L_p(\Omega;\ell_\infty)$-norm of the sequence $(T_nf)_n$ in the noncommutative case. Motivated by Pisier's definition of $\ell_\infty$-valued noncommutative $L_p$-spaces for hyperfinite algebra and the noncommutative martingale inequalities in the seminal paper  \cite{PX1}, Junge  \cite{junge} in 2002 extended Pisier's definition for general algebras and Doob's $L_p$-maximal inequality for noncommutative martingales with argument based on Hilbert module theory. A few years later, Junge and Xu  \cite{JX1} obtained $L_p$-maximal inequalities for ergodic averages; the key tool {was} a noncommutative analogue of Marcinkiewicz interpolation theorem for families of positive maps which allows to deduce results from Cuculescu and Yeadon's inequalities.

Even though the noncommutative maximal inequalities for ergodic averages and conditional expectations have now been established, there appear a lot of difficulties to obtain maximal inequalities for other families of linear operators. For instance, Mei \cite{M} has to invent an ingenious idea to show the noncommutative Hardy-Littlewood maximal inequalities based on Junge's noncommutative Doob's inequalities; in \cite{HLW}, Hong \textit{et al} exploited the group structure or probabilistic method to show the noncommutative maximal ergodic inequalities associated with the action of groups of polynomial growth; since the lack of estimates of Fourier multipliers on group algebras, the first author and his collaborators \cite{HWW} invented some quantum semigroup and analyze carefully its difference with Fourier multiplier to establish the maximal inequalities and show the pointwise convergence of noncommutative Fourier series. We refer the reader to the above mentioned papers and references therein for more results on noncommutative maximal inequalities.

As far as we know,  except some non-sharp results for Bochner-Riesz means in \cite{CXY13, HWW}, there does not exist in the literature any other non-trivial noncommutative maximal inequalities for families of non-positive linear operators, such as the truncated Calder\'on-Zygmund operators and Dirichlet means.
\medskip

In this paper, we will establish the noncommutative maximal inequalities for families of truncated operator-valued singular integrals.

\medskip
Recall that a (standard) Calder\'on-Zygmund operator (abbrieviated as CZO) $T$ is a singular integral operator in $\mathbb R^d$ mapping test functions to distributions associated with a (standard) Calder\'on-Zygmund kernel
$k: \R^d \times \R^d \setminus \{(x,x):\;x\in\mathbb R^d\} \to \C$ in the sense that
\begin{equation}\label{30}
Tf(x) = \int_{\R^d} k(x,y) f(y) \, dy,
\end{equation}
whenever $f\in C^\infty_c(\mathbb R^d)$ a test function and $x \notin \mbox{supp} \hskip1pt f$, which admits a bounded extension on $L_2(\mathbb R^d)$. A Calder\'on-Zygmund kernel $k$ is called {\emph{standard}}
if it satisfies the size condition
\begin{align}\label{1}
|k(x,y)| \ \leq \
\frac{C}{|x-y|^d}
\end{align}
for $x,y \in \R^d$, where and in the sequel, $C$ is a positive numerical constant, and the $\gamma$-Lipchitz regularity condition with $\gamma\in(0,1]$,
\begin{align}\label{232}
\begin{array}{rcl} \big| k(x,y) - k(x,z) \big|+\big| k(y,x) - k(z,x) \big|& \leq &
\displaystyle \frac{C|y-z|^\gamma}{|x-y|^{d+\gamma}}   \hskip1pt
\end{array}
\end{align}
for $x,y,z\in\R^d$ satisfying $|x-y|\geq2|y-z|$. The distance $|x-y|$ between $x$ and $y$ is taken with respect to $\ell_{2}$ metric throughout the paper. The simplest example of CZO is the Hilbert transform (or Riesz transform). It is well-known that a standard Calder\'on-Zygmund operator $T$ admits a bounded extension and thus is well-defined on $L_p$ for all $1\leq p<\infty$; moreover, the same conclusions hold still true if the $\gamma$-Lipchitz condition is weakened to some $L_{q}$-integral regularity condition with $q\in(0,+\infty)$ (see for instance \cite{HH})
\begin{align}\label{6}
\sum_{m=1}^{\infty}\delta_{q}(m)<\infty,\,
\end{align}
where
\begin{align}\label{5}
\delta_{q}(m)=\sup_{\begin{subarray}{c} R>0, y\in\R^d \\
|v|\leq R \end{subarray}}\big((2^{m}R)^{d(q-1)}\int_{2^{m}R\leq|x-y|\leq2^{m+1}R}\big| k(x,y+v) - k(x,y) \big|^{q}dx\big)^{\frac{1}{q}}.
\end{align}

If $k$ satisfies the Lipschitz smoothness condition, then $\delta_{q}(m)\leq C_{d}2^{-m\gamma}$ ($C_{d}$ is a constant depending on dimension $d$) for any $q>0$, which has nice decay property;
in view of (\ref{6}), the H\"ormander condition
\begin{align*}\label{hor}
\int_{|x-y|\geq 2|y-z|} \big| k(x,y) - k(x,z) \big|dx%
<\infty,\;\forall y,z\in\mathbb R^d   \hskip1pt
\end{align*}
can be restated as $L_{1}$-integral regularity condition $\sum_{m=1}^{\infty}\delta_{1}(m)<\infty$.  In this paper, our results will be concerned with the case $q=2$ which is slightly stronger than the H\"ormander condition. Indeed, by the H\"{o}lder inequality, $\delta_{1}(m)\leq C_{d}\delta_{2}(m)$.

\medskip

 Let $\M$ be a von Neumann algebra equipped with a normal semi-finite
faithful trace (abbrieviated as \emph{n.s.f}) $\tau$ and $\mathcal N=L_{\infty}(\R^{d})\overline{\otimes}\M$ be the von Neumann algebra tensor product equipped with tensor trace $\varphi=\int\otimes\tau$. Then we can define the associated noncommutative $L_p$-spaces $L_p(\M)$ and $L_p(\mathcal N)$. The latter can be identified as the space of $L_p(\mathcal M)$-valued $p$-th integrable functions on $\R^d$. From semi-commutative Calder\'on-Zygmund theory \cite{M, JP1}, any Calder\'on-Zygmund operator $T$ with kernel satisfying the size and the H\"ormander conditions admits completely bounded extension and thus is well-defined on $L_p(\mathcal N)$ for all $1<p<\infty$; moreover if the H\"ormander condition is strengthened to the $\gamma$-Lipchitz condition, Parcet \cite{JP1} showed that the extension $T\otimes id_\M$ is of weak type $(1,1)$ and thus well-defined on $L_1(\R^d;L_1(\M))$ (see also \cite{C}) and {\it left the sufficiency of H\"ormander condition as an open question}. For simplifying the notation, the extension $T\otimes id_\M$ will be still denoted as $T$, admitting the following expression
\begin{equation}\label{3}
Tf(x) = \int_{\R^d} k(x,y) f(y) \, dy,
\end{equation}
whenever $f$ is a $L_1(\M)\cap L_\infty(\M)$-valued compactly supported measurable function and $x \notin\overrightarrow{\mathrm{supp}}  \hskip1pt f$ which is the support of $f$ as an
operator-valued function in $\R^d$.

Given a Calder\'on-Zygmund operator $T$ with kernel $k$. For any $\varepsilon>0$, we define the associated truncated singular integrals $T_{\varepsilon}f$ by
\begin{equation}\label{maximal}
T_{\varepsilon}f(x) =\int_{|x-y|>\varepsilon} k(x,y) f(y)dy.
\end{equation}
It is well-known that $T_\varepsilon$'s are Calder\'on-Zygmund operators with kernels satisfying the same conditions as those of $k$.

Our first result is a criterion for the noncommutative weak type $(1,1)$ estimate of  maximal Calder\'on-Zygmund operators; the reader is referred to Section 2 for the notions of $\ell_\infty$-valued noncommutative $L_{p}$ spaces $L_{p}(\mathcal N;\ell_{\infty})$.

\begin{thm}\label{p2}\rm
Let $T$ be a Calder\'on-Zygmund operator defined as (\ref{3}) associated to a kernel satisfying (\ref{1}) and (\ref{6}) with $q=2$ and let $T_{\varepsilon}$ be defined as (\ref{maximal}). Assume that there exists one $p_0\in (1,\infty)$ such that $(T_{\varepsilon})_{\varepsilon>0}$ is of strong type $(p_0,p_0)$, that is,
\begin{align}\label{p condition}
\|(T_{\varepsilon}f)_{\varepsilon>0}\|_{L_{p_{0}}(\mathcal N;\ell_{\infty})}\leq C\|f\|_{p_0},\;\ \ \forall f\in L_{p_{0}}(\mathcal N).
\end{align}
Then $(T_{\varepsilon})_{\varepsilon>0}$ is of weak type $(1,1)$, that is,
 for any $f\in L_{1}(\mathcal N)$ and $\lambda>0$, there exists a projection $e\in\mathcal{N}$ such that
\begin{align}\label{1 conclu}
\sup_{\varepsilon>0}\big\|e (T_{\varepsilon}f)e\big\|_\infty\leq\lambda\quad\mbox{and}\quad \varphi (e^{\perp})\leq C_{d}\lambda^{-1}\|f\|_1.
\end{align}
\end{thm}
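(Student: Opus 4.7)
The strategy is to run the noncommutative Calder\'on--Zygmund decomposition of Parcet \cite{JP1} at level $\lambda$ jointly with the hypothesis (\ref{p condition}); the main new issue is uniformity in the truncation parameter $\varepsilon$. By standard reductions I may assume $f\in L_1(\mathcal N)_+$. Performing Cuculescu's construction with respect to the dyadic filtration $(\mathcal N_k)_k$ on $\mathcal N=L_\infty(\R^d)\overline{\otimes}\M$ produces projections $q_k$ whose infimum $q=\bigwedge_k q_k$ satisfies $\varphi(1-q)\leq C\lambda^{-1}\|f\|_1$, together with a splitting $f=g+b$ in which $\|g\|_\infty\leq C\lambda$, $\|g\|_2^2\leq C\lambda\|f\|_1$, and $b=\sum_Q b_Q$ with each $b_Q$ having operator-valued mean zero along its stopping cube $Q$. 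For the good part, interpolation gives $\|g\|_{p_0}^{p_0}\leq C\lambda^{p_0-1}\|f\|_1$, so applying the hypothesis (\ref{p condition}) and a noncommutative Chebyshev-type argument in $L_{p_0}(\mathcal N;\ell_\infty)$ (in the spirit of \cite{JX1,M}) yields a projection $e_g\in\mathcal N$ with $\varphi(1-e_g)\leq C\lambda^{-1}\|f\|_1$ and $\sup_{\varepsilon>0}\|e_g T_\varepsilon g\,e_g\|_\infty\leq C\lambda$.

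For the bad part I enlarge the stopping cubes: set $\Omega=\bigcup_Q 9Q$ and $e_b=\chi_{\R^d\setminus\Omega}\otimes 1$, so that $\varphi(1-e_b)\leq C\lambda^{-1}\|f\|_1$, and the task is to bound $\sup_{\varepsilon>0} e_b T_\varepsilon b\,e_b$ uniformly by $\lambda$ in the appropriate noncommutative $L_1$-sense. For $x\in\R^d\setminus\Omega$ I split $T_\varepsilon b_Q(x)$ into three regimes according to how the truncation sphere $\{|x-y|=\varepsilon\}$ meets $Q$. When $\varepsilon\leq\mathrm{dist}(x,Q)$ the truncation is transparent on the support of $b_Q$, so I use the mean-zero cancellation against $k(x,\cdot)-k(x,c_Q)$ together with (\ref{6}); when $\varepsilon\geq\mathrm{dist}(x,Q)+\mathrm{diam}(Q)$ the cube $Q$ lies entirely inside the removed ball and $T_\varepsilon b_Q(x)=0$; in the intermediate ``corona'' regime $\varepsilon\sim|x-c_Q|$ I subtract $k_\varepsilon(x,c_Q)$ and realize the difference as a kernel supported in a spherical annulus of width $\lesssim\mathrm{diam}(Q)$ around $|x-y|=\varepsilon$. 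Summing the resulting column/row estimates uniformly in $\varepsilon$ bounds $(T_\varepsilon b)_\varepsilon$ between the corners of $e_b$, and a final Chebyshev step extracts the desired projection.

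\emph{Main obstacle.} The genuinely new step is the corona estimate: the truncated kernel $k_\varepsilon$ has a jump along $|x-y|=\varepsilon$ that is insensitive to the smoothness of $k$, so the standard H\"ormander argument fails there. The strengthening of H\"ormander to the $L_2$-integral regularity (\ref{6})--(\ref{5}) with $q=2$ is exactly what allows a Cauchy--Schwarz estimate on an annulus of thickness $\sim\ell(Q)$ to produce the summable factor $\delta_2(m)$, indexed by the dyadic distance from $Q$ to the truncation sphere. Arranging a single projection $e\leq e_g\wedge e_b$ to work uniformly in $\varepsilon$, while remaining compatible with the $L_1(\mathcal N)+L_\infty(\mathcal N)$ interpolation structure used for the good part, is the technical heart of the argument.
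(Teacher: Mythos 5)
Your plan captures the coarse outline (CZ decomposition, good part via the $L_{p_0}$ hypothesis plus Chebyshev, bad part via cancellation and the $L_2$-H\"ormander condition), and your diagnosis that the ``corona'' near the truncation sphere is the obstacle is a genuine insight. However, there are several places where the plan as written would not go through in the noncommutative setting, and they correspond exactly to the new ideas in the paper.

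First, the decomposition you describe is the \emph{commutative} one. In the operator-valued setting, Parcet's decomposition is not $f=g+b$ with $\|g\|_\infty\lesssim\lambda$ and $b=\sum_Q b_Q$ of mean zero; it has four pieces $g_d+g_{\mathit{off}}+b_d+b_{\mathit{off}}$, and only the diagonal good part $g_d$ satisfies the $L_\infty$ bound. The off-diagonal good part $g_{\mathit{off}}$ enjoys neither an $L_\infty$ bound nor localized cancellation, and in \cite{JP1} its treatment is precisely what required the pseudo-localization principle. For the family $(T_\varepsilon)_\varepsilon$ it is not at all clear that a pseudo-localization principle exists. The paper bypasses this entirely by using Cadilhac's modified decomposition (Theorem \ref{czdecom}), in which replacing $\max(i,j)$ by $\min(i,j)$ makes $g_{\mathit{off}}$ vanish identically. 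Without this replacement, or an alternative way to control $g_{\mathit{off}}$ uniformly in $\varepsilon$, your plan has a hole that the $L_2$-H\"ormander estimate on the bad part does not fill.

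Second, your exceptional projection $e_b=\chi_{\R^d\setminus\bigcup_Q 9Q}\otimes 1$ is not the right noncommutative object. The Cuculescu construction yields projections $p_Q\in\mathcal M$ attached to the stopping cubes, and the exceptional ``set'' is $\zeta^\perp=\bigvee_Q p_Q\chi_{(2s+1)Q}$: its trace is controlled by $\sum_Q|(2s+1)Q|\tau(p_Q)\lesssim\lambda^{-1}\|f\|_1$, whereas $|\bigcup_Q 9Q|$ need not be small because the cubes are not selected by a scalar stopping time. You therefore cannot reduce to ``$x$ away from the enlarged cubes'' and sum scalar estimates.

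Third, the sandwich $-A\leq e\,T_\varepsilon b\,e\leq A$ that feeds the Chebyshev step needs $e\,T_\varepsilon b\,e$ to be selfadjoint, and $T_\varepsilon b$ is not selfadjoint for a complex kernel even when $b$ is. The paper's Lemma \ref{mainlemma2} (decompose $k$ into real and imaginary parts, which preserves the size and $L_2$-regularity conditions as well as the hypothesis \eqref{p condition}) is what licenses this, and you need an analogous reduction before any $\pm A$ domination is meaningful.

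Finally, on the corona estimate itself: the paper avoids a direct fight with the truncation sphere by a second reduction. Writing $k=\sum_i k\phi_i$ with a smooth dyadic partition, only the top annular piece $T^\phi_{\varepsilon,j_\varepsilon}$ sees the hard cut-off, and its maximal function is sandwiched pointwise (in operator order) between $\pm c\,M_{2^{-j_\varepsilon+1}\sqrt d}f$, hence controlled by Mei's noncommutative Hardy--Littlewood maximal inequality. The remaining lacunary family $(T^\phi_j)_j$ has smooth kernels and is where the CZ decomposition is applied. This replaces your three-regime/annulus Cauchy--Schwarz analysis by something that is both uniform in $\varepsilon$ for free and compatible with the noncommutative maximal formalism. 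Your approach might be salvageable, but as stated it does not explain how the corona contribution is dominated by a \emph{single} positive operator uniformly in $\varepsilon$, which is the only way a Chebyshev-type projection can be extracted in $\Lambda_{1,\infty}(\mathcal N;\ell_\infty)$.
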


One is able to formulate the weak type $(1,1)$ estimate \eqref{1 conclu} in a similar way as the strong type \eqref{p condition} if using the $\ell_\infty$-valued weak $L_p$ space $\Lambda_{p,\infty}(\mathcal N;\ell_\infty)$ (for its definition see \eqref{weakm}).

In the process of showing Theorem \ref{p2}, we observe that the argument essentially work also for Calder\'on-Zygmund operator itself with $L_{2}$-integral regularity condition. As mentioned previously, this solves partially a conjecture (see e.g. \cite[Page 575]{JP1}, that is, the pointwise $\gamma$-Lipchitz condition can be weakened to some integral regularity condition. We will leave the details of the proof of the following result to the interested readers and we refer the reader to Section 2 for the definition of weak $L_{p}$-space $L_{p,\infty}(\mathcal N)$.
\begin{thm}\label{t2}\rm
Let $T$ be defined as (\ref{3}) associated to a kernel satisfying (\ref{1}) and (\ref{6}) with $q=2$.
Assume
that $T$ is bounded on $L_{p_{0}}(\mathcal N)$ for some $p_{0}$ with $1<p_{0}<\infty$. Then  for any $f\in L_1(\mathcal N)$,
$$\|Tf\|_{L_{1,\infty}(\mathcal N)}\leq C_{d}\|f\|_{1}.$$
\end{thm}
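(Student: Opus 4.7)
The strategy is to adapt the proof of Theorem \ref{p2} by dropping the truncation family $(T_\varepsilon)_\varepsilon$ and running the same machinery for the single operator $T$. Given $f\in L_1(\mathcal N)$ with $\|f\|_1=1$ and $\lambda>0$, I would apply Parcet's noncommutative Calder\'on-Zygmund decomposition \cite{JP1} at level $\lambda$ to write $f=g+b$, where $g\in L_{p_0}(\mathcal N)$ with $\|g\|_{p_0}^{p_0}\leq C\lambda^{p_0-1}$, and $b=\sum_Q b_Q$ is a sum of bad atoms indexed by dyadic cubes $Q$ having operator-valued mean zero and localized near $Q$ up to conjugation by a projection $q\in\mathcal N$ of trace $\leq C\lambda^{-1}$. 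The goal is to produce a projection $e\in\mathcal N$ with $\|e(Tf)e\|_\infty\leq\lambda$ and $\varphi(e^\perp)\leq C_d\lambda^{-1}$.

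For the good part, the hypothesized $L_{p_0}$-boundedness of $T$ gives $\|Tg\|_{p_0}^{p_0}\leq C\lambda^{p_0-1}$, so Chebyshev's inequality produces a projection $\zeta$ (built from spectral projections of both $|Tg|$ and $|(Tg)^*|$ at height $\lambda$) with $\varphi(\zeta^\perp)\leq C\lambda^{-1}$ and $\|\zeta(Tg)\zeta\|_\infty\leq\lambda$. For the bad part, one enlarges $q$ to a projection $\widehat q$ dominating the dilates $2Q$ of all bad cubes; maximality of the CZ decomposition forces $\varphi(\widehat q^\perp)\leq C_d\lambda^{-1}$. It then suffices to estimate $\|\widehat q^\perp(Tb)\widehat q^\perp\|_2$ and invoke Chebyshev once more to obtain a third projection $\eta$ with $\varphi(\eta^\perp)\lesssim\lambda^{-1}$ on which $Tb$ is bounded by $\lambda$.

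The crux is to show
\begin{align*}
\bigl\|\widehat q^\perp(Tb)\widehat q^\perp\bigr\|_2^2\leq C_d\lambda.
\end{align*}
For each atom $b_Q$, the mean zero property gives $Tb_Q(x)=\int_Q\bigl(k(x,y)-k(x,c_Q)\bigr)b_Q(y)\,dy$ for $x\notin 2Q$. Cauchy-Schwarz in $y$ together with the $L_2$-integral regularity (\ref{6}) then produce
\begin{align*}
\int_{A_{Q,m}}\|Tb_Q(x)\|_{L_2(\mathcal M)}^2\,dx\leq 2^{-md}\,\delta_2(m)^2\,\|b_Q\|_2^2,
\end{align*}
where $A_{Q,m}=\{x:2^mr_Q\leq|x-c_Q|<2^{m+1}r_Q\}$; summing in $m$ (and using $\delta_2(m)\leq C$, which follows from $\sum_m\delta_2(m)<\infty$) together with the CZ bound $\|b_Q\|_2^2\leq C\lambda\|b_Q\|_1$ yields $\|\widehat q^\perp Tb_Q\|_2^2\leq C_d\lambda\|b_Q\|_1$.

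The main obstacle, shared with the proof of Theorem \ref{p2}, is to combine these atomic estimates into a single $L_2$-control of $\widehat q^\perp Tb$. A naive triangle inequality over $Q$ fails because $\sum_Q\|b_Q\|_1^{1/2}$ is not of order $1$. Instead one expands $\bigl\|\sum_Q \widehat q^\perp Tb_Q\bigr\|_2^2$, groups the cross terms $\bigl\langle\widehat q^\perp Tb_Q,\widehat q^\perp Tb_{Q'}\bigr\rangle$ by the relative scale of $Q$ and $Q'$, and exploits the almost-orthogonality provided by the annular decay in $(\delta_2(m))_m$ together with the bounded overlap of the dilated CZ cubes. Once this almost-orthogonal double sum is tamed, the total bound $\sum_Q\|b_Q\|_1\leq C$ closes the estimate, and $e:=\zeta\wedge\widehat q\wedge\eta$ is the sought-after projection, yielding $\|Tf\|_{L_{1,\infty}(\mathcal N)}\leq C_d\|f\|_1$ as claimed.
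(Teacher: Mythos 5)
Your proposal runs into two genuine obstacles.

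First, the pivotal estimate $\|b_Q\|_2^2 \leq C\lambda\|b_Q\|_1$ is false. The bad atoms of a Calder\'on--Zygmund decomposition of an $L_1$ function are not $L_\infty$-bounded (and hence not in $L_2$): in the commutative case $b_Q=(f-f_Q)\chi_Q$ inherits only $L_1$-control from $f$, and the same is true of $b_{d,n}=p_n(f-f_n)p_n$ in Cuculescu's construction --- the $L_\infty$ bound $q_k f_k q_k\le\lambda q_k$ holds for the \emph{good} part only (conjugation by $q$, not by $p$). So the crux of your argument, $\|\widehat q^\perp(Tb)\widehat q^\perp\|_2^2\le C_d\lambda$, cannot be obtained from the annular $L_2$ kernel estimate applied atomwise, since the quantity $\|b_Q\|_2$ is infinite in general. (The $L_2$-route for the bad part can be made to work in the commutative \emph{rough}-kernel setting via Plancherel and Fourier decay of $\widehat{b_Q}$, but that is a very different mechanism which you do not invoke, and whose noncommutative analogue is not available here.) The paper's actual proof sidesteps this entirely: for the diagonal bad part it dominates $\zeta T^\phi_j b_d\zeta$ by $\zeta F_1\zeta$ and bounds $\|F_1\|_1\lesssim\|f\|_1$ by Minkowski and the $L_1$-kernel estimate of Lemma \ref{HLX6}; for the off-diagonal bad part it uses the column-Hilbert-space Cauchy--Schwarz inequality (Lemma \ref{mainlemma}), crucially exploiting that one factor of the off-diagonal atom $p_Q f q_Q$ carries the projection $q_Q$ for which $\|\,|Q|\,q_Q f_Q q_Q\|_\infty\le|Q|\lambda$ --- the $L_\infty$-bound enters only through that one side, never through the atom $b_Q$ itself.

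Second, you invoke Parcet's decomposition from \cite{JP1}, whose good function has a nonzero off-diagonal part $g_{\mathit{off}}$. In \cite{JP1}, handling $g_{\mathit{off}}$ requires the pseudo-localization principle, which is proved there under the pointwise Lipschitz condition and is not known under the $L_2$-integral regularity hypothesis of this theorem. The paper instead uses Cadilhac's modified decomposition (Theorem \ref{czdecom}; see Remark \ref{parcet}), for which $g_{\mathit{off}}$ vanishes, so that the good function $g$ genuinely satisfies $\|g\|_1\le\|f\|_1$ and $\|g\|_\infty\le 2^d\lambda$ and the $L_{p_0}$-Chebyshev step you describe goes through. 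With Parcet's decomposition your bound $\|g\|_{p_0}^{p_0}\le C\lambda^{p_0-1}$ for the full good function is not justified. The fix is to replace the decomposition, use the projection $\zeta$ of \eqref{czd9}, and replace the $L_2$-of-$Tb$ estimate by the two $L_1$-of-$F_i$ arguments from the proofs of \eqref{bd} and \eqref{bd11} (with the sum over $i<j$, $i<n-1$ simply replaced by the full sum $i<n-1$, since only one operator $T$ is involved).
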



In the commutative case, that is $\M=\mathbb C$, under the assumption that the kernel satisfies the $\gamma$-Lipchitz regularity condition, conclusion \eqref{1 conclu}, as well as assumption \eqref{p condition}, can be deduced from the following Cotlar inequality: for all $0<s<1$,
\begin{align}\label{weak cotlar}
\sup_{\varepsilon>0}|T_\varepsilon f(x)|\leq C_{d,\gamma}\big[\sup_{\varepsilon>0}M_\varepsilon (|f|)(x)+\sup_{\varepsilon>0}(M_\varepsilon(|Tf|^s)(x))^{\frac1s}\big],
\end{align}
where $M_\varepsilon$ is the Hardy-Littlewood averaging operator
\begin{equation}\label{e:20Mr}
M_\varepsilon g(x)=\frac{1}{\varepsilon^{d}}\int_{|x-y|\leq \varepsilon}g(y)dy,
\end{equation}
and $C_{d,\gamma}$ is a constant depending on $d$ and $\gamma$. The Cotlar inequality \eqref{weak cotlar} in turn follows from some pointwise localized argument (see e.g. \cite[Theorem 4.2.4]{Gra12008}). But the above pointwise estimate \eqref{weak cotlar} and its proof seem impossible to admit noncommutative analogues. On the other hand, in the commutative case, the $L_{2}$-integral regularity condition seems not sufficient for a weak Cotlar inequality, that is, \eqref{weak cotlar} with $s=1$, which would be enough for strong type $(p,p)$ estimate for maximal CZOs for all $1<p<\infty$. In conclusion, under the conditions (\ref{1}) and (\ref{6}) with $q=2$,  it is not clear at all whether \eqref{p condition} holds for any $p_0\in (1,\infty)$.



However, when the CZO is of convolution type, that is, $k(x,y)=k(x-y)$, a noncommutative variant of the weak Cotlar inequality---\eqref{weak cotlar} with $s=1$---in terms of norms can be  verified under the $\gamma$-Lipchitz regularity condition (see \eqref{cotlar norm}). This might be known to experts, but we will formulate it rigorously.

 It is well-known (see e.g. \cite{Gra2008}) that under the size condition \eqref{1} and the $\gamma$-Lipschitz condtion \eqref{232}, $T$ is a standard CZO of convolution type if and only if its associated kernel $k$ satisfies additionally the cancellation condition
\begin{align}\label{con4}
\sup_{0<r<R<\infty}\Big|\int_{r<|x|<R}k(x)dx\Big|<\infty.
\end{align}
It is also known from classical Calder\'on-Zygmund theory \cite{Gra2008} that \eqref{con4} implies that there exists a subsequence $(\varepsilon_j)_{j\in\mathbb N}\subset(0,+\infty)$  with $\varepsilon_j\to 0$ as $j\to+\infty$ such that
\begin{align}\label{can}
\lim_{j\rightarrow+\infty}\int_{\varepsilon_j<|x|\leq1}k(x)dx \;\mathrm{exists}
\end{align}
and for any $f\in L_{p}(\mathbb R^d)$ with $1\leq p<\infty$,
\begin{align}\label{a.e.}
T_{\varepsilon_j}f(x)\rightarrow Tf(x) \;\mathrm{a.e.},\;\mathrm{as}\;j\to+\infty.
\end{align}


We state our second result of this paper.

\begin{thm}\label{t1}\rm
Let $T$ be a CZO of convolution type with  associated kernel satisfying (\ref{1}), (\ref{232}) and (\ref{con4}). Then we have the following conclusions.
\begin{itemize}
\item[(i)] For $1<p<\infty$, $(T_{\varepsilon})_{\varepsilon>0}$ is of strong type $(p,p)$.

\item[(ii)]$(T_{\varepsilon})_{\varepsilon>0}$ is of weak type $(1,1)$.

\item[(iii)] For any $f\in L_{p}(\mathcal N)$ with $1\leq p<\infty$,
$$T_{\varepsilon_j} f\xrightarrow{\rm b.a.u} Tf \ \ \text{as} \ \ j\to\ +\infty,$$
where $(\varepsilon_j)_{j\in\N}$ is the subsequence appeared in (\ref{can}).
In other words, the principle value $Tf$ exists in the sense of b.a.u. for any $f\in L_{p}(\mathcal N)$ with $1\leq p<\infty$.
\end{itemize}
\end{thm}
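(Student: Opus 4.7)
The plan is to prove the three items in order, with (i) carrying the main technical content via a noncommutative norm version of Cotlar's inequality, (ii) being an immediate application of Theorem \ref{p2}, and (iii) being a standard density/Banach-principle transfer from a suitably chosen dense subclass.

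For (i), the key step is to establish the norm Cotlar inequality
\begin{equation*}
\big\|(T_\varepsilon f)_{\varepsilon>0}\big\|_{L_p(\mathcal N;\ell_\infty)} \;\lesssim\; \|Tf\|_{L_p(\mathcal N)} + \|f\|_{L_p(\mathcal N)}, \qquad 1<p<\infty.
\end{equation*}
Fix a smooth radial bump $\phi\geq 0$ supported in the unit ball with $\int\phi=1$, and set $\phi_\varepsilon(x)=\varepsilon^{-d}\phi(x/\varepsilon)$. Writing $k_\varepsilon(y)=k(y)\mathbf{1}_{\{|y|>\varepsilon\}}$ and using that $T$ is of convolution type, I decompose
\begin{equation*}
T_\varepsilon f \;=\; \phi_\varepsilon\ast Tf \;+\; h_\varepsilon\ast f, \qquad h_\varepsilon := k_\varepsilon - \phi_\varepsilon\ast k.
\end{equation*}
A direct computation using the size bound (\ref{1}) and the $\gamma$-Lipschitz condition (\ref{232}) yields the pointwise scalar estimate
\begin{equation*}
|h_\varepsilon(y)| \;\lesssim\; \varepsilon^{-d}\mathbf{1}_{\{|y|\leq 2\varepsilon\}} \;+\; \frac{\varepsilon^\gamma}{|y|^{d+\gamma}}\mathbf{1}_{\{|y|>2\varepsilon\}}.
\end{equation*}
The first summand $\phi_\varepsilon\ast Tf$ and the convolution $h_\varepsilon\ast f$ are both dominated in the $L_p(\mathcal N;\ell_\infty)$-sense by the Hardy-Littlewood averaging family applied to $Tf$ and $f$ respectively (for $h_\varepsilon$ one splits into annuli $2^m\varepsilon<|y|\leq 2^{m+1}\varepsilon$ and uses $\sum_m 2^{-m\gamma}<\infty$). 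Invoking Mei's noncommutative Hardy-Littlewood maximal inequality together with the semi-commutative $L_p(\mathcal N)$-boundedness of $T$ (which holds since (\ref{232}) implies the H\"ormander condition, so \cite{M,JP1} apply) finishes (i).

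For (ii), I note that the $\gamma$-Lipschitz condition on $k$ gives $\delta_2(m)\lesssim 2^{-m\gamma}$ and hence (\ref{6}) with $q=2$. Thus the hypotheses of Theorem \ref{p2} are met once we supply the strong $(p_0,p_0)$ estimate (\ref{p condition}) for some $p_0\in(1,\infty)$, which is precisely what (i) provides. Theorem \ref{p2} then yields the weak type $(1,1)$ bound for $(T_\varepsilon)_{\varepsilon>0}$.

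For (iii), I apply a density argument. Take $\mathcal D = C_c^\infty(\mathbb R^d)\otimes(\mathcal M\cap L_1(\mathcal M))$, which is dense in $L_p(\mathcal N)$ for $1\leq p<\infty$. For $g\in\mathcal D$, the classical scalar argument based on (\ref{can}) (applied slotwise in the operator variable) shows that $T_{\varepsilon_j}g\to Tg$ in operator norm uniformly on $\mathbb R^d$, hence b.a.u. For general $f\in L_p(\mathcal N)$ and any $\eta>0$, pick $g\in\mathcal D$ with $\|f-g\|_p$ small; the maximal inequality from (i) (when $1<p<\infty$) or (ii) (when $p=1$) furnishes a projection $e\in\mathcal N$ with $\varphi(e^\perp)<\eta$ and $\sup_j\|e(T_{\varepsilon_j}(f-g)-T(f-g))e\|_\infty<\eta$. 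Combined with the uniform convergence for $g$, this yields the b.a.u. convergence $T_{\varepsilon_j}f\to Tf$. The main obstacle throughout is the verification of the norm Cotlar inequality in (i): while the kernel-level bound on $h_\varepsilon$ is classical, the passage from scalar pointwise domination to an inequality in the $L_p(\mathcal N;\ell_\infty)$ quasi-norm must be handled via the operator-valued Hardy-Littlewood maximal inequality rather than via a pointwise argument, and the interaction with the $\sup$-over-$\varepsilon$ structure requires care.
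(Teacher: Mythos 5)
Your proposal is correct and mirrors the paper's proof in all three parts: the norm Cotlar inequality for (i) via the decomposition $T_\varepsilon f=\phi_\varepsilon\ast Tf+(k\chi_{|\cdot|>\varepsilon}-\phi_\varepsilon\ast k)\ast f$, the classical pointwise bound on the error kernel by a dilated $(1+|x|)^{-(d+\gamma)}$ profile, and Mei's noncommutative Hardy--Littlewood maximal inequality together with the $L_p(\mathcal N)$-boundedness of $T$; the application of Theorem \ref{p2} for (ii); and the density/transfer argument for (iii). The only small difference is in (iii), where the paper establishes b.a.u.\ convergence through the Cauchy criterion $\|e(T_{\varepsilon_k}f-T_{\varepsilon_\ell}f)e\|_\infty\to0$, whereas you control $\sup_j\|e(T_{\varepsilon_j}(f-g)-T(f-g))e\|_\infty$ directly, which implicitly also requires a weak-type bound on the single operator $T$ applied to $f-g$ (available from Parcet's result, so harmless, but the Cauchy formulation sidesteps it).
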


Here b.a.u. denotes the noncommutative analogue of the notion of almost everywhere convergence and we refer the reader to Definition \ref{3456} in the last section for information.

\medskip

As mentioned previously, even in the commutative case, it is unclear whether Theorem \ref{t1}~(i) holds if the $\gamma$-Lipschitz condition \eqref{232} is weakened to some integral regularity condition \eqref{6}. However, if the kernel has further homogeneous property, that is, has the form ${\Omega({x})}/{|x|^{d}}$ where $\Omega$ is a homogeneous function defined on $\R^{d}\backslash \{0\}$ with degree zero
\begin{align}\label{44899}
\Omega(\lambda x')=\Omega(x'),\;\forall \lambda>0\; \mathrm{and} \;x'\in S^{d-1}
\end{align}
and integrable on the sphere $S^{d-1}$ with mean value zero
$$\int_{S^{d-1}}\Omega(x')d\sigma(x')=0,$$
then any regularity condition is  not necessary for all the strong type $(p,p)$ estimates of $(T_{\Omega,\varepsilon})_{\varepsilon>0}$, where
 \begin{align}\label{max32}
T_{\Omega,\varepsilon}f(x)=\int_{|x-y|>\varepsilon}
\frac{\Omega({x-y})}{|x-y|^{d}}f(y)dy.
\end{align}
Instead, we only need some integrability condition.
For $s\geq0$, we say that $\Omega\in L(\log^{+}L)^{s}(S^{d-1})$ if and only if
$$\int_{S^{d-1}}|\Omega(\theta)|[\log(2+|\Omega(\theta)|)]^{s}
d\theta<\infty,$$
where $d\theta=d\sigma(\theta)$ denotes the sphere measure of $S^{d-1}$. When $s=1$, we use the convention $L\log^{+}L(S^{d-1})\triangleq L(\log^{+}L)^{1}(S^{d-1})$.  
It is well-known that the following inclusion relations hold:
$$L_{2}(S^{d-1})\subset L(\log^{+}L)^{2}(S^{d-1})\subset L\log^{+}L(S^{d-1})\subset L_{1}(S^{d-1}).$$

For the endpoint case $p=1$, even in the commutative setting, it is still a conjecture that the maximal homogeneous singular integral operator with rough kernel is of weak type $(1,1)$. Thus at the moment, we are content with getting results by imposing
the so-called $L_{2}$-Dini assumption on $\Omega$
\begin{align}\label{max33}
\int_{0}^{1}\frac{\omega_{2}(s)}{s}ds<\infty,
\end{align}
where
$$\omega_{2}(\delta)=\sup_{0<|\alpha|<\delta}\Big(\int_{S^{d-1}}|\Omega(\theta)
-\Omega(\theta+\alpha)
|^{2}d\theta\Big)^{\frac{1}{2}}.$$
Now we state the third result of this paper.
\begin{thm}\label{t3}\rm
Let $T_{\Omega}$ be a homogeneous singular integral operator with $\Omega$ integrable on $S^{d-1}$ with mean value zero. Then we have the following conclusions.
\begin{itemize}
\item[(i)] Let $\Omega\in L\log^{+}L(S^{d-1})$. Then $(T_{\Omega,\varepsilon})_{\varepsilon>0}$ is of strong type $(p,p)$ with $1<p<\infty$.

\item[(ii)] Let $\Omega\in L_{2}(S^{d-1})$ satisfy (\ref{max33}). Then $(T_{\Omega,\varepsilon})_{\varepsilon>0}$ is of weak type $(1,1)$.

\item[(iii)] Let $\Omega\in L\log^{+}L(S^{d-1})$. Then for any
 $f\in L_{p}(\mathcal N)$ with $1<p<\infty$
$$T_{\Omega,\varepsilon} f\xrightarrow{\rm b.a.u} T_\Omega f \ \ \text{as} \ \ \varepsilon\to\ 0.$$
Moreover if  $\Omega\in L_{2}(S^{d-1})$ satisfies (\ref{max33}), then the above b.a.u. convergence holds for $f\in L_1(\mathcal N)$.
\end{itemize}
\end{thm}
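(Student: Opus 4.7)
The plan is to treat the three parts in order, with part (i) supplying the machinery that feeds parts (ii) and (iii).

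For part (i), I would establish a norm-valued Cotlar-type decomposition adapted to the noncommutative setting. Fix a smooth nonnegative radial bump $\phi$ with $\int\phi=1$ and set $\phi_\varepsilon(x)=\varepsilon^{-d}\phi(x/\varepsilon)$. Writing $K(x)=\Omega(x)/|x|^d$, I split
$$T_{\Omega,\varepsilon}f \;=\; \phi_\varepsilon*(T_\Omega f) \;+\; \bigl(K\chi_{|\cdot|>\varepsilon}-K*\phi_\varepsilon\bigr)*f.$$
The first term is controlled in $L_p(\mathcal N;\ell_\infty)$ by the noncommutative Hardy--Littlewood maximal inequality of Mei applied to $T_\Omega f$, and $T_\Omega$ itself is bounded on $L_p(\mathcal N)$ by the semi-commutative Calder\'on--Zygmund theory of \cite{M,JP1} (the Fourier multiplier of $T_\Omega$ is bounded when $\Omega\in L\log^+L$, and the kernel satisfies the H\"ormander condition). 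The second term has kernel essentially supported on $|x|\sim\varepsilon$, so it is dominated, in the sense of noncommutative averaging, by the rough maximal operator $M_{|\Omega|}f(x)=\sup_\varepsilon\varepsilon^{-d}\int_{|x-y|\le C\varepsilon}|\Omega(x-y)||f(y)|\,dy$. To bound $M_{|\Omega|}$ on $L_p(\mathcal N;\ell_\infty)$, I would use a layer-cake decomposition $|\Omega|=\sum_{k\ge0}a_k\Omega_k$ with $\|\Omega_k\|_\infty\le 1$ and weights satisfying $\sum_k a_k\|\Omega_k\|_1\lesssim \|\Omega\|_{L\log^+L}$; for each $k$, $M_{\Omega_k}$ is pointwise dominated by the usual Hardy--Littlewood maximal operator so Mei's $\ell_\infty$-valued estimate applies, and the $L\log^+L$ condition ensures summability.

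For part (ii), the strategy is to invoke Theorem \ref{p2} with $p_0=2$. The $L_2$-Dini assumption together with homogeneity yields the $L_2$-integral regularity condition \eqref{6} with $q=2$: for $|v|\le R$ and $|x-y|\sim 2^mR$, one writes
$$K(x{-}y{-}v)-K(x{-}y)=\bigl[\Omega((x{-}y{-}v)/|x{-}y{-}v|)-\Omega((x{-}y)/|x{-}y|)\bigr]|x{-}y{-}v|^{-d}+\Omega((x{-}y)/|x{-}y|)\bigl[|x{-}y{-}v|^{-d}-|x{-}y|^{-d}\bigr],$$
and both pieces are handled by integrating in angle using $\omega_2$ and Minkowski; the resulting bound on $\delta_2(m)$ is summable by the $L_2$-Dini condition. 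The technical wrinkle is that the pointwise size condition \eqref{1} demands $\Omega\in L_\infty$ while $L_2$-Dini does not; I would therefore split $\Omega=\Omega_1+\Omega_2$ with $\Omega_1$ a smooth truncation in $L_\infty$ still satisfying the Dini estimate, apply Theorem \ref{p2} to $T_{\Omega_1,\varepsilon}$ via part (i), and dispose of $T_{\Omega_2,\varepsilon}$ by an $L_2$ argument before letting the truncation exhaust $\Omega$.

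For part (iii), I would run the standard Banach principle for b.a.u.\ convergence. On a suitable dense subclass (for instance compactly supported elements of $L_1(\mathcal M)\cap L_\infty(\mathcal M)$ tensored with Schwartz functions, or Fej\'er-type polynomials where pointwise convergence of $T_{\Omega,\varepsilon_j}f\to T_\Omega f$ can be verified by hand), the b.a.u.\ convergence is straightforward. The maximal estimates in (i) and (ii)---strong $(p,p)$ for $1<p<\infty$ when $\Omega\in L\log^+L$, and weak $(1,1)$ when $\Omega\in L_2(S^{d-1})$ satisfies \eqref{max33}---then extend the convergence to arbitrary $f\in L_p(\mathcal N)$ in the respective ranges, in the same way that the corresponding statement in Theorem \ref{t1}(iii) follows from parts (i) and (ii) there.

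The main obstacle I anticipate is part (i): the Cotlar splitting above is a pointwise commutative identity, and carrying it through the $L_p(\mathcal N;\ell_\infty)$ framework demands that each error term be written as an average of $|f|$ against a nonnegative kernel so that Mei's noncommutative Hardy--Littlewood maximal inequality applies (noncommutative domination is sensitive to signs). Controlling $M_{|\Omega|}$ sharply in terms of $\|\Omega\|_{L\log^+L}$---rather than a cruder $L^r$ norm---also requires careful bookkeeping in the layer-cake decomposition, and is what makes $L\log^+L$ the correct hypothesis in (i) and (iii).
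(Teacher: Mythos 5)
Your plan for part (iii) matches the paper, and part (ii) has the right skeleton (verify the $L_2$-integral regularity and invoke Theorem \ref{p2}), but part (i) has a genuine gap, and it is the load-bearing part.

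The Cotlar splitting $T_{\Omega,\varepsilon}f=\phi_\varepsilon*(T_\Omega f)+(K\chi_{|\cdot|>\varepsilon}-K*\phi_\varepsilon)*f$ is exactly what the paper uses for Theorem \ref{t1}, but there it relies crucially on the $\gamma$-Lipschitz smoothness of the kernel: the pointwise bound $|K\chi_{|\cdot|>\varepsilon}-K*\phi_\varepsilon|(x)\lesssim\psi_\varepsilon(x)$ with $\psi$ integrable with decay $\langle x\rangle^{-d-\gamma}$ needs $|K(x)-K(x-y)|\lesssim|y|^\gamma/|x|^{d+\gamma}$ for $|x|\gg|y|$. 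For $\Omega\in L\log^+L(S^{d-1})$ with no modulus-of-continuity assumption, the angular increment $\Omega(x/|x|)-\Omega((x-y)/|x-y|)$ has no size or decay, so the error kernel is \emph{not} concentrated at $|x|\sim\varepsilon$; its far tail is as singular as $K$ itself. Dominating the error term by $M_{|\Omega|}$ therefore does not go through pointwise, and the layer-cake refinement doesn't repair this because the difficulty is in the tails, not in the size of $\Omega$. (Your statement that the kernel satisfies the H\"ormander condition under $\Omega\in L\log^+L$ is also false without a Dini-type hypothesis; the $L_p$ boundedness of $T_\Omega$ in this roughness class comes precisely from the method of rotations, not from a smooth Calder\'on--Zygmund argument.) The paper instead proves (i) by the classical method of rotations: split $\Omega=\Omega_e+\Omega_o$; handle the odd part by writing $T_{\Omega_o,\varepsilon}$ as an $L^1(S^{d-1})$ average of truncated directional Hilbert transforms $H_{\theta,\varepsilon}$, whose maximal $L_p$ boundedness is inherited from Theorem \ref{t1} by Fubini/rotation; handle the even part via the Riesz transform identity $-\sum_jR_j^2=I$, which converts the smoothly truncated even kernel into a sum of terms involving odd homogeneous kernels $k_j$ (with $\|\Omega_j\|_1$ controlled by $\|\Omega\|_{L\log^+L}$, per Grafakos 5.2.10) plus nicely decaying tails; the remainder between rough and smooth truncations is bounded by directional Hardy--Littlewood averages. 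This is a genuinely different route; it buys the rough case $\Omega\in L\log^+L$ at the cost of needing the Riesz transform bookkeeping.

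There is a second, smaller gap in part (ii). When you feed the problem into Theorem \ref{p2}, the reduction from $(T_\varepsilon)_{\varepsilon>0}$ to the lacunary family $(T^\phi_j)_{j\in\Z}$ in Proposition \ref{HLX1} bounds the boundary piece $T^\phi_{\varepsilon,j_\varepsilon}f$ by a Hardy--Littlewood average. For a rough kernel $\Omega/|x|^d$ the size condition \eqref{1} fails, so Mei's maximal inequality does not apply; the paper's fix is not a truncation of $\Omega$ but a direct substitution of the noncommutative \emph{rough} maximal inequalities of \cite{Lai} for the operator $M_{\Omega,r}f(x)=r^{-d}\int_{|x-y|\le r}\Omega(x-y)f(y)\,dy$. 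Your proposed smooth-truncation workaround is left vague (what happens to $T_{\Omega_2,\varepsilon}$ at $\lambda$-level for weak $(1,1)$ after the $L_2$ argument, uniformly in the truncation parameter?) and would need to be carried out carefully; invoking \cite{Lai} is cleaner and is what the paper does.

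Part (iii) is fine as stated, and matches the paper's Banach-principle argument once (i) and (ii) are in place.
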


\medskip

Theorem \ref{t1} and \ref{t3}, as Mei's and Parcet's aforementioned results in the first paragraph, together with the transference techniques (see e.g. \cite{HLW}), have applications in other related topics such as ergodic theory. Indeed, given a trace-preserving automorphic action $\alpha$ of $\mathbb R^d$ on $\mathcal M$, then $\alpha$ extends to an isometric automorphism on $L_p(\M)$ for all $0<p<\infty$. Let $f\in L_{p}(\M)$ with $1\leq p<\infty$. Let $k$ be a complex-valued measurable function defined on $\mathbb R^d\setminus\{0\}$ satisfying the assumptions in Theorem \ref{t3} (depending on the case $p=1$ or $p>1$), the result in \cite{HLW} implies that the element of the operator $T_k$ induced by $k$ acting on $f\in L_p(\M)$ exists as a principle value in the sense of b.a.u. convergence, that is,
$$T_kf=\lim_{\varepsilon\to0}\int_{|x|>\varepsilon}k(x)\alpha_xfdx,\;b.a.u.$$
If $k$ satisfies the assumptions in Theorem \ref{t1}, then the above limit has to be taken along $(\varepsilon_j)_j$.

\medskip

Let us briefly describe the strategy of the proof of Theorem \ref{p2}.  As mentioned after Theorem \ref{t2}, the commutative argument based on the Cotlar inequality \eqref{weak cotlar} do not work in the noncommutative setting.
Thus we have to provide a different approach. We start with two reductions. The first one is to reduce a general CZO to two selfadjoint ones (see Lemma \ref{mainlemma2}); this reduction is not difficult but essential in our argument in dealing with noncommutative maximal estimates. The second one is to reduce the desired maximal estimate of $(T_\varepsilon)_{\varepsilon>0}$ to that of the lacunary subsequence $(T^{\phi}_{2^j})_{j\in\mathbb Z}$ (see (\ref{0090})) via the noncommutative Hardy-Littlewood maximal inequalities; this reduction plays a key role in the present noncommutative setting, see Section 3.

With these two reductions, we give our main efforts to the weak type $(1,1)$ maximal estimate of lacunary sequence of truncated Calder\'on-Zygmund operators. The main ingredient is a new noncommutative Calder\'on-Zygmund decomposition communicated to us by Cadilhac \cite{C2}, see Theorem \ref{czdecom}. This decomposition, compared to Parcet's one \cite{JP1}, admits a great advantage that
the off-diagonal part of the good function vanishes (see Remark \ref{parcet});
recall that in his long paper \cite{JP1}, in order to deal with this part, Parcet had to exploit a pseudo-localization principle which constitutes a major part of that paper (see \cite{C} for a simplified proof of this principle). In our present case for noncommutative maximal estimate, it is not clear at all that whether there exists a pseudo-localization principle. To make this new decomposition work, we are also partially inspired by Cadilhac's note \cite{C2} where he deals with Lipchitz kernels, see Remark \ref{clever}.


The rest of the paper is organized as follows. In Section 2, we review some facts on the $\ell_\infty$-valued noncommutative $L_{p}$ spaces as well as the noncommutative Calder\'on-Zygmund decomposition found by Cadilhac. Section 3 and 4 are devoted to the proof of Theorem \ref{p2}. In Section 5 (resp. Section 6), we give the proof of the maximal inequalities stated in Theorem \ref{t1} (resp. Theorem \ref{t3}). In the last section, we show the noncommutative pointwise convergence results stated in
Theorem \ref{t1} and Theorem \ref{t3}.

\textbf{Notation:} Throughout the paper we write
$X\lesssim Y$ for nonnegative quantities $X$ and $Y$ to mean that there is  some inessential constant $C>0$ such that $X\le CY$ and we write $X\thickapprox Y$ to imply that $X\lesssim Y$ and $Y \lesssim X$.

\medskip

NOTE: After the preliminary version of the paper was completed, the first author was kindly told by Javier Parcet that Theorem \ref{t2} has been discovered independently by him and Jos\'{e} M Conde-Alonso.
\section{Preliminaries}
\subsection{Noncommutative $L_{p}$-spaces}
Let $\M$ be a semifinite von Neumann algebra equipped with a \emph{n.s.f.} trace $\tau$.
Denote by $\M_{+}$ the positive part of $\M$ and let $\mathcal{S_{\M+}}$  be the set of all $x\in\M_{+}$ whose support projection has a finite trace. Let $\mathcal{S}_{\M}$ be the linear span of $\mathcal{S_{\M+}}$, then $\mathcal{S}_{\M}$ is a $w^{*}$-dense $\ast$-subalgebra of $\M$. Let $0< p<\infty$. For any $x\in\mathcal{S}_{\M}$, $|x|^{p}\in\mathcal{S}_{\M}$ and we set
$$\|x\|_{p}=\big(\tau(|x|^p)\big)^{1/p},\ \ x\in\mathcal{S}_{\M}.$$
Here $|x|=(x^{\ast}x)^{\frac{1}{2}}$ is the modulus of $x$. By definition, the noncommutative $L_{p}$-space associated with $(\M,\tau)$ is the completion of $(\mathcal{S}_{\M},\|\cdot \|_{p})$ and it is denoted by $L_{p}(\M)$. For convenience, we set $L_{\infty}(\M) = \M$ equipped with the operator norm $\|\cdot \|_{\M}$. Let $L_{p}(\M)_{+}$ denote the positive part of $L_{p}(\M)$.

Suppose that $\M\subset B(\mathcal{H})$ acts on a separable Hilbert space $\mathcal{H}$. Let $\M'$ be the commmutant of $\M$. A closed densely defined operator on $\mathcal{H}$ is said to be affiliated with $\M$ when it commutes with every unitary operator $u$ in $\M'$. If $x$ is a densely defined selfadjoint operator on $\mathcal{H}$
and $x = \int_{\R} \lambda \hskip1pt d \gamma_x(\lambda)$ is its corresponding spectral
decomposition, then the spectral projection $\int_{\mathcal{I}} d
\gamma_x(\lambda)$ will be simply denoted by $\chi_{\mathcal{I}}(x)$, where $\mathcal{I}$ is a measurable subset of $\R$. A closed and densely defined operator
$x$ affiliated with $\mathcal{M}$ is called \emph{$\tau$-measurable} if
there exists $\lambda > 0$ such that $$\tau \big( \chi_{(\lambda,\infty)}
(|x|) \big) < \infty.$$
We denote the set of the $\ast$-algebra of \emph{$\tau$-measurable} operators by $L_{0}(\M)$.
For $1\leq p<\infty$, the weak $L_{p}$-space $L_{p,\infty}(\M)$ is defined as the set of all $x$ in $L_0(\M)$ with finite quasi-norm
$$\|x\|_{p,\infty}=\sup_{\lambda > 0}\lambda\tau \big( \chi_{(\lambda,\infty)}
(|x|) \big)^{\frac{1}{p}}<\infty.$$
We refer the reader to \cite{P2,FK} for a detailed exposition of noncommutative $L_{p}$-spaces.

\medskip

\subsection{Vector-valued noncommutative $L_{p}$-spaces}
We first recall the column space. Let $(\Sigma,\mu)$ be a measure space. The column space $L_p(\M;L^c_2(\Sigma))$ consists of the operator-valued functions $f$ with finite norm for $p\geq1$ (quasi-norm for $0<p<1$)
$$\|f\|_{L_p(\M;L^c_2(\Sigma))}=\Big\|\Big(\int_{\Sigma}f^*(\omega)f(\omega)d\mu(\omega)\Big)^{\frac12}\Big\|_{p}<\infty.$$
We refer the reader to \cite{P2} for precise definition and related properties of the Hilbert valued operator spaces.
The most important property for our purpose is the following H\"older type inequality (see e.g. \cite[Proposition 1.1]{M}).
\begin{lem}\label{mainlemma}\rm
Let $0<p,q,r\leq\infty$ be such that $1/r=1/p+1/q$. Then for any $f\in L_p(\M;L^c_2(\Sigma))$ and $g\in L_q(\M;L^c_{2}(\Sigma))$
$$\Big\|\int_{\Sigma}f^*(\omega)g(\omega)d\mu(\omega)\Big\|_{r}\leq \Big\|\Big(\int_{\Sigma}|f(\omega)|^2d\mu(\omega)\Big)^{\frac12}\Big\|_{p}
\Big\|\Big(\int_{\Sigma}|g(\omega)|^2d\mu(\omega)\Big)^{\frac12}\|_{q}.$$
\end{lem}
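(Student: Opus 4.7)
The statement is a H\"older-type inequality for operator-valued column-square-integrable functions, and my plan is to realize $f$ and $g$ as elements of an enlarged semifinite von Neumann algebra in such a way that each column norm becomes an ordinary noncommutative $L_p$-norm and $\int f^{*}g\,d\mu$ becomes an ordinary product. The inequality will then reduce to the standard noncommutative H\"older inequality applied in that larger algebra.

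First I would form the ambient algebra $\widetilde{\M} = \M\,\overline{\otimes}\,B(L_2(\Sigma,\mu))$ equipped with the tensor trace $\widetilde{\tau} = \tau \otimes \mathrm{tr}$. Fix a unit vector $\xi_0 \in L_2(\Sigma,\mu)$ and let $e$ denote the rank-one projection onto $\mathbb{C}\xi_0$, so that $1\otimes e$ plays the role of a ``column'' projection in $\widetilde{\M}$. To each $f\in L_p(\M; L_2^c(\Sigma))$ I associate the element $F\in L_p(\widetilde{\M})(1\otimes e)$ informally given by $F = \int_\Sigma f(\omega)\otimes |\delta_\omega\rangle\langle\xi_0|\,d\mu(\omega)$, to be made rigorous either by simple-function approximation or by directly invoking Pisier's identification between $L_p(\M; L_2^c(\Sigma))$ and the right ideal $L_p(\widetilde{\M})(1\otimes e)$. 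Defining $G$ analogously from $g$, the key calculations are
$$F^{*}F = \Big(\int f^{*}f\,d\mu\Big)\otimes e, \qquad G^{*}G = \Big(\int g^{*}g\,d\mu\Big)\otimes e, \qquad F^{*}G = \Big(\int f^{*}g\,d\mu\Big)\otimes e,$$
which together with $\widetilde{\tau} = \tau\otimes\mathrm{tr}$ and $\mathrm{tr}(e)=1$ yield $\|F\|_{L_p(\widetilde{\M})} = \|(\int f^{*}f\,d\mu)^{1/2}\|_p$, $\|G\|_{L_q(\widetilde{\M})} = \|(\int g^{*}g\,d\mu)^{1/2}\|_q$ and $\|F^{*}G\|_{L_r(\widetilde{\M})} = \|\int f^{*}g\,d\mu\|_r$. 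The desired inequality then becomes precisely $\|F^{*}G\|_r \leq \|F\|_p \|G\|_q$, which is the classical noncommutative H\"older inequality in $L_r(\widetilde{\M})$, valid for all $0<p,q,r\leq\infty$ with $1/r = 1/p + 1/q$ (in the quasi-norm sense when $r<1$).

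The only delicate point is justifying the embedding $f\mapsto F$ rigorously for a general $\sigma$-finite $(\Sigma,\mu)$: in the discrete case it is merely the column-vector realization $F = (f(\omega))_\omega \in M_{|\Sigma|,1}(\M)\hookrightarrow M_{|\Sigma|}(\M)$, and every identity above is immediate; in the continuous case one extends from simple functions by density. I do not expect any essential obstacle here, since the argument ultimately amounts to unwinding Pisier's definition of the column-valued noncommutative $L_p$-space and invoking the standard noncommutative H\"older inequality once.
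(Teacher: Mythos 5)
The paper does not give its own proof of this lemma; it is stated with a citation to Mei's Proposition~1.1 in \cite{M}, so there is no internal argument to compare against. Your proof is correct and is the standard route to this inequality: identify $L_p(\M;L_2^c(\Sigma))$ with the column corner $L_p(\widetilde{\M})(1\otimes e)$ inside the amplified algebra $\widetilde{\M}=\M\,\overline{\otimes}\,B(L_2(\Sigma,\mu))$, observe that since $\mathrm{tr}(e)=1$ one has $\|a\otimes e\|_{L_s(\widetilde{\M})}=\|a\|_{L_s(\M)}$ for every $s$ so that the three $L_\bullet$-norms transfer exactly, and then invoke noncommutative H\"older in $\widetilde{\M}$ together with $\|F^*\|_p=\|F\|_p$. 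The ``delta-column'' heuristic needs the rigorous reading you already flag: either view $F$ directly as the operator $\mathcal{H}\otimes\C\xi_0\to L_2(\Sigma;\mathcal{H})$, $\xi\otimes\xi_0\mapsto f(\cdot)\xi$ (extended by $0$ on $(\C\xi_0)^\perp$), under which $F^*F=\bigl(\int f^*f\,d\mu\bigr)\otimes e$, $G^*G=\bigl(\int g^*g\,d\mu\bigr)\otimes e$ and $F^*G=\bigl(\int f^*g\,d\mu\bigr)\otimes e$ are genuine operator identities, or approximate by simple functions and pass to the limit. With either justification in place the reduction $\|F^*G\|_r\le\|F\|_p\|G\|_q$ yields the claim for the full range $0<p,q,r\le\infty$ with $1/r=1/p+1/q$ (with quasi-norms when $r<1$), exactly as you indicate.
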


\medskip

As we mentioned in the introduction, a fundamental objective of this paper is the $\ell_\infty$-valued noncommutative $L_{p}$ spaces $L_p(\mathcal {M};\ell_\infty)$ introduced by Pisier \cite{pisier} and Junge \cite{junge}. Given $1\le p\leq\infty$,
we define $L_p(\mathcal {M};\ell_\infty)$ as the space of all sequences $x=(x_k)_{k\geq1}$
in $L_p(\mathcal {M})$ which admits a factorization of the following form:
there exist $a,b\in L_{2p}( \mathcal {M})$  and a bounded sequence $y=(y_k)_{k\geq1}\subset L_\infty(\mathcal {M})$ such that
$$x_k=ay_kb,\ \ \ k\ge 1.$$
The norm of $x$ in $L_p(\mathcal {M};\ell_\infty)$ is defined as
$$
\|x\|_{L_p(\mathcal {M};\ell_\infty)}=\inf\left\{\big\|a\big\|_{{2p}}\sup_k
\big\|y_k\big\|_\infty\big\|b\big\|_{{2p}}\right\},$$
where the infimum is taken over all factorizations of $x$ as above. It is easy to verify that $L_p(\mathcal {M};\ell_\infty)$ is a Banach space equipped with the norm $\|\cdot \|_{L_p(\mathcal {M};\ell_\infty)}$. As usual, the norm of $x$ in $L_p(\mathcal {M};\ell_\infty)$ is conventionally denoted by $\|{\sup_{k\geq1}}^+x_k\|_p$. However, we should point out that
${\sup_{k\geq1}}^+x_k$ is
just a notation since ${\sup_{k\geq1}}x_k$ does not make any sense
in the noncommutative setting. We just use this notation for convenience.
More generally, for any index set $I$, the space $L_p(\mathcal {M}; \ell_\infty(I))$ can be defined similarly.
The following property can be found in \cite[Remark 4.1]{CXY13}.
\begin{rk}\label{rk:MaxFunct}\rm
Let $x=(x_k)_{k\in I}$ be a sequence of selfadjoint operators in $L_p(\M)$.
Then $x\in L_p(\M;\ell_\infty)$ iff there is a positive operator $a\in
L_p(\M)$ such that $-a\leq x_k \le a$ for all $k\in I$, and moreover,
$$\big \|{\sup_{k\in I}}^{+} x_k \big \|_p=\inf\big\{\|a\|_p\;:\; a\in L_p(\M)_{+},\; -a\leq x_k\le a,\;\forall\; k\in I\big\}.$$
\end{rk}

Besides the strong maximal norm which corresponds to the $L_{p}$-norm of a maximal function, we now turn to the weak maximal norm (see e.g. \cite{Hong20}) which corresponds to the weak
$L_{p}$ norm of a maximal function. Let $I$ be an index set. Given a family $(x_k)_{k\in I}$
in $L_p(\mathcal {M})$ with $1\leq p< \infty$, we define
\begin{align}\label{weakm}
\|(x_k)_{k\in I}\|_{\Lambda_{p,\infty}(\M;\ell_{\infty}(I))}
=\sup_{\lambda>0}\lambda\inf_{e\in\mathcal{P}(\M)}
\Big\{\big(\tau(e^{\perp})\big)^{\frac{1}{p}}:\|ex_ke\|_{\infty}\leq\lambda\ \mbox{for\ all}\ k\in I\Big\},
\end{align}where $\mathcal{P}(\M)$ is the set of all projections in $\M$. Finally, we set the quasi-Banach space $\Lambda_{p,\infty}(\M;\ell_{\infty}(I))$ to be the set of
all sequences $x=(x_k)_{k\in I}$ in $L_{p,\infty}(\M)$ such that its $\Lambda_{p,\infty}(\M;\ell_{\infty}(I))$ quasi-norm is finite.
We will omit the index set $I$ when it will not cause confusions.

\begin{rk}\label{weakm4}\rm
Let $x=(x_k)_{k}$ be a sequence of selfadjoint operators in $L_{p}(\M)$. As in Remark \ref{rk:MaxFunct}, there is a similar characterization of $\Lambda_{p,\infty}(\M;\ell_{\infty})$ quasi-norm for a sequence of selfadjoint operators $x=(x_k)_{k}$,
\begin{align}\label{weakm1}
\|(x_k)_{k}\|_{\Lambda_{p,\infty}(\M;\ell_{\infty})}=\sup_{\lambda>0}\lambda\inf_{e\in\mathcal{P}(\M)}
\Big\{\big(\tau(e^{\perp})\big)^{\frac{1}{p}}:-\lambda\leq ex_ke\leq\lambda\ \mbox{for\ all}\ k\Big\}.
\end{align}
Indeed, this follows from
\begin{align}\label{weakm7}
-\lambda\leq ex_ke\leq\lambda \Leftrightarrow |ex_ke|\leq\lambda \Leftrightarrow \|ex_ke\|_{\infty}\leq\lambda
\end{align}
for any $k$.
\end{rk}
\begin{remark}\rm
The equivalence relationships in (\ref{weakm7}) would not be true in general if $\lambda$ is replaced by a positive operator. More precisely, considering a positive operator $g$ and a selfadjoint operator $f$ such that $-g\leq f\leq g$, it is not true that $|f|\leq g$ in general. For instance, consider
$$f=\left( \begin{array}{ll}
   8 & \ 0 \\
   0 &-8
 \end{array}\right) ,\quad
 g=\left( \begin{array}{ll}
   10 & \ 6 \\
    \ 6 &10
   \end{array}\right);
 $$
then it is easy to see that  $-g\leq f\leq g$, while $g-|f|$ is not positive.
\end{remark}

\subsection{Noncommutative Calder{\'o}n-Zygmund decomposition}\quad


\medskip

The noncommutative Calder{\'o}n-Zygmund decomposition is based on Cuculescu's construction for the standard dyadic martingales. Let us recall briefly the related notions.
Given an integer $k \in \Z$, $\Q_k$ stands for the set of dyadic cubes of side length $2^{-k}$, $\sigma_k$ be the $\sigma$-algebra generated by $\Q_k$ and $\mathcal N_k=L_\infty(\mathbb R^d,\sigma_k,dx)\overline{\otimes}\M$ be the associated von Neumann subalgebra of $\mathcal N$, where $\mathcal N=L_\infty(\mathbb R^d)\overline{\otimes}\M$ was given in the introduction. Then it is well-known that $(\mathcal{N}_k)_{k \in \Z}$ is a sequence of increasing von Neumann subalgebras such that the union is weak$^*$ dense in $\mathcal N$, and thus forms a filtration with the associated conditional expectations $(\mathsf{E}_k)_{k\in\Z}$ defined as
$$\mathsf{E}_k(f) = \sum_{Q \in \Q_k}^{\null} f_Q \chi_Q,\;\forall f\in L_1(\mathcal N)$$
where $\chi_Q$ is the characteristic function of $Q$ and $f_Q$ denotes the mean of $f$ over $Q$
$$f_Q = \frac{1}{|Q|} \int_Q f(y) \, dy.$$
Here $|Q|$ denotes the volume of $Q$.

Let $1\leq p\leq\infty$. A sequence $(f_k)_{k\in\mathbb Z}\subset L_p(\mathcal N)$ will be called a $L_p$-martingale if it satisfies $\mathsf{E}_{k-1}(f_k)=f_{k-1}$; in this case, the martingale difference is defined as $df_{k}=f_{k}-f_{k-1}$.

For further convenience, we make some conventions or definitions. Denote by  $\Q$ the set of all standard dyadic cubes in $\R^d$. The notation $dist(x,Q)$ means the distance between $x$ and $Q$.
For all $x\in\R^{d}$, denote by $Q_{x,k}$ the unique cube in $\Q_k$ containing $x$ and $c_{x,k}$ denotes its centre. Let $Q\in\Q$ and $i$ be any odd positive integer, $iQ$ stand for the cube with the same center as $Q$ such that $\ell(iQ)=i\ell(Q)$, where $\ell(Q)$ denotes the side length of $Q$.

\medskip

As in \cite{JP1}, the noncommutative Calder\'on-Zygmund decomposition will be constructed for functions in the class
$$\mathcal N_{c,+} =\Big\{
f: \R^d \to \M\cap L_1(\M) \, \big| \ f \geq0, \
\overrightarrow{\mathrm{supp}} \hskip1pt f \ \ \mathrm{is \
compact} \Big\},$$
which is dense in $L_1(\mathcal N)_{+}$. Recall that
$\overrightarrow{\mathrm{supp}}$ means the support of $f$ as an
operator-valued function in $\R^d$, which is different from its support projection as an element
of a von Neumann algebra.
\begin{Cuculescutheo}\rm
Let $f \in \mathcal{N}_{c,+}$ and $\lambda>0$. Denote $f_k=\mathsf{E}_k(f)$. It was shown in \cite[Lemma 3.1]{JP1} that there exists $m_{\lambda}(f)\in\Z$ such that $f_{k}\leq\lambda\1_{\mathcal{N}}$ for all $k\leq m_{\lambda}(f)$. Now by adopting Cuculescu's construction \cite{Cuc} to $(f_k)_{k\in\mathbb Z}$ relative to the dyadic filtration $(\mathcal{N}_k)_{k \in\Z}$, one can find a
sequence of decreasing projections $(q_k)_{k\in\Z}$ defined recursively by $q_k = \1_\mathcal{N}$ for $k\leq m_{\lambda}(f)$ and for $k>m_\lambda(f)$
$$q_k=q_k(f,\lambda)=\chi_{(0,\lambda]}(q_{k-1} f_k q_{k-1})$$
such that
\begin{itemize}
\item $q_k$ commutes with $q_{k-1} f_k
q_{k-1}$;

\item $q_k$ belongs to $\mathcal{N}_k$ and $q_k f_k q_k \le \lambda \hskip1pt
q_k$;

\item the following estimate holds $$\varphi \Big(
\mathbf{1}_\mathcal{N} - \bigwedge_{k \in\Z} q_k \Big) \le
\frac{\|f\|_1}{\lambda}.$$
\end{itemize}
In the present semi-commutative setting, $q_k$ admits the following expression
$$q_{k}=\sum_{Q\in\Q_{k}}q_{Q}\chi_{Q},$$
where $q_{Q}$ is a projection in $\M$ with
$$q_{Q}=\begin{cases} \1_\M & \mbox{if} \ k \leq
m_{\lambda}(f),
\\ \chi_{(0,\lambda]} \big( q_{\widehat{Q}} f_Q q_{\widehat{Q}}\big) & \mbox{if} \ k > m_{\lambda}(f),
\end{cases}$$
where $\widehat{Q}$ is the dyadic father of $Q$. Accordingly these projections satisfy
\begin{equation}\label{czd5}
\  q_Q\leq q_{\widehat{Q}},\ \
\  q_Q\  \mbox{commutes\ with}\  q_{\widehat{Q}} f_Q q_{\widehat{Q}},\ \
\   q_Q f_Q q_Q \le \lambda q_Q.
\end{equation}

Then one can define the sequence $(p_k)_{k \in \Z}$ of disjoint projections by $p_k =
q_{k-1}-q_k$ such that $$\sum_{k \in \Z} p_k = \1_\mathcal{N} - q =q^\perp\quad \mbox{with} \quad q = \bigwedge_{k \in
\Z} q_k.$$
One can then express the projections $p_k$ as
\begin{equation}\label{czd1}
p_k=\sum_{Q\in\Q_{k}}(q_{\widehat{Q}}-q_{Q})\chi_{Q}\triangleq\sum_{Q\in\Q_{k}}p_Q\chi_{Q}.
\end{equation}
\end{Cuculescutheo}
The following version of noncommutative Calder{\'o}n-Zygmund decomposition was communicated to us by Cadilhac \cite{C2}. 
\begin{thm}\label{czdecom}\rm
Fix $f\in\mathcal N_{c,+}$, $\lambda>0$ and $s\in\N$. Let $(q_k)_{k\in\Z}$ and $(p_k)_{k\in\Z}$ be the two sequences of projections appeared in the above Cuculescu's construction. Then there exist a projection $\zeta\in \mathcal N$ defined by
\begin{equation}\label{czd9}
\zeta = \big(\bigvee_{Q\in \Q} p_Q\chi_{(2s+1)Q}\big)^{\bot},
\end{equation}
and a decomposition of $f$,
\begin{equation}\label{czd76789}
f = g + b_d +
b_\mathit{off}
\end{equation}
such that the following assertions hold.

\begin{enumerate}[(1)]
\item [(i)]$\varphi(\mathbf 1_\mathcal{N}-\zeta) \leq (2s+1)^{d}\dfrac{\|f\|_1}{\lambda}$, where $\1_{\mathcal{N}}$ stands for the unit
elements in $\mathcal{N}$.
\item[(ii)] $g$ is positive in $\mathcal{N}$; moreover,
$\| g\|_1 \le\|f\|_1 \quad \mbox{and} \quad \|
g\|_\infty \le 2^{d} \lambda$.


\item[(iii)]$b_d=\sum_{n\in\Z}b_{d,n}$, where
\begin{equation}\label{czd6}
b_{d,n}=p_n(f-f_n) p_n.
\end{equation}
Each $b_{d,n}$ satisfies the cancellation conditions: for $Q\in \Q_{n}$,
     $\int_Q b_{d,n} = 0;$
    and for all $x,y\in\R^{d}$ such that $y \in (2s+1)Q_{x,n}$, $\zeta(x)b_{d,n}(y)\zeta(x) = 0$. Furthermore, $\sum_{n\in\Z}\|
b_{d,n} \|_1 \le 2 \, \|f\|_1$.


\item[(iv)]$b_\mathit{off}=\sum_{n\in\Z}b_{n}$, where
\begin{equation}\label{czd7}
b_{n}=p_n (f-f_{n}) q_n+q_n(f-f_{n})p_n.
\end{equation}
Each $b_{n}$ satisfies: for $Q\in \Q_{n}$,
     $\int_Q b_{n} = 0$; and for all $x,y\in\R^{d}$ such that $y \in (2s+1)Q_{x,n}$, $\zeta(x)b_{n}(y)\zeta(x) = 0$.
\end{enumerate}
\end{thm}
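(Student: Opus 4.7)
The plan is to define the good part as
$g := qfq + \sum_{n\in\Z} p_n f_n p_n$,
where $q = \bigwedge_k q_k$ and $f_n = \mathsf{E}_n(f)$, and then verify the four assertions. The algebraic heart of the construction is Cuculescu's commutation $q_n\cdot q_{n-1}f_nq_{n-1} = q_{n-1}f_nq_{n-1}\cdot q_n$: together with $q_{n-1} = p_n+q_n$, it forces the off-diagonal cancellation
\begin{equation*}
q_{n-1} f_n q_{n-1} = q_n f_n q_n + p_n f_n p_n .
\end{equation*}
Substituting into the tautology $b_{d,n}+b_n = q_{n-1}(f-f_n)q_{n-1} - q_n(f-f_n)q_n$ rewrites $b_{d,n}+b_n = (q_{n-1}fq_{n-1}-q_nfq_n) - p_nf_np_n$; the first piece telescopes in $L_1$ to $f-qfq$, using $q_{-N}=\mathbf 1_{\mathcal N}$ for $-N\leq m_\lambda(f)$, the strong convergence $q_N\downarrow q$, and Doob's $L_1$-convergence $f_N\to f$. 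Summing over $n$ therefore yields $f = g + b_d + b_\mathit{off}$.

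Assertion (i) is a projection union bound:
\begin{equation*}
\varphi(\mathbf 1_{\mathcal N}-\zeta) \leq \sum_{Q\in\Q}|(2s+1)Q|\,\tau(p_Q) = (2s+1)^d \varphi(\mathbf 1_{\mathcal N}-q) \leq (2s+1)^d\|f\|_1/\lambda .
\end{equation*}
For (ii), positivity of $g$ is immediate and trace cyclicity together with $p_n\in\mathcal N_n$ gives $\|g\|_1 = \varphi(qf) + \sum_n\varphi(p_nf_n) = \varphi\bigl((q+\sum_n p_n)f\bigr) = \|f\|_1$. Since $q$ and $\sum_n p_n$ are orthogonal projections summing to $\mathbf 1_{\mathcal N}$, one has $\|g\|_\infty = \max\{\|qfq\|_\infty, \|\sum_n p_nf_np_n\|_\infty\}$. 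Dyadic doubling $f_Q\leq 2^d f_{\widehat Q}$ combined with the Cuculescu bound $q_{\widehat Q}f_{\widehat Q}q_{\widehat Q}\leq\lambda q_{\widehat Q}$ yields $p_Qf_Qp_Q = p_Q(q_{\widehat Q}f_Qq_{\widehat Q})p_Q \leq 2^d\lambda p_Q$, so the second sup-norm is at most $2^d\lambda$. For $qfq$, since $q\leq q_n$ one has $qf_nq\leq \lambda q$ for every $n$; passing to the $L_1$-limit $f_n\to f$ and invoking closedness of the positive cone gives $qfq\leq\lambda q$. Hence $\|g\|_\infty\leq 2^d\lambda$.

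For (iii) and (iv), the mean-zero conditions $\int_Q b_{d,n} = \int_Q b_n = 0$ for $Q\in\Q_n$ follow because $p_Q,q_Q$ are constant on $Q$ and $\int_Q(f-f_Q)=0$; the $L_1$ bound $\sum_n\|b_{d,n}\|_1\leq 2\|f\|_1$ follows from $\|b_{d,n}\|_1\leq 2\varphi(p_nf)$ and summation. The main obstacle, and the only point where the parameter $s$ enters substantively, is the orthogonality $\zeta(x)b_{d,n}(y)\zeta(x) = 0$ (and similarly for $b_n$) whenever $y\in(2s+1)Q_{x,n}$. Since both $b_{d,n}(y)$ and $b_n(y)$ carry $p_{Q_{y,n}}$ as a one-sided factor, it suffices to show $\zeta(x)p_{Q_{y,n}}=0$, which by the definition $\mathbf 1_{\mathcal N}-\zeta = \bigvee_{Q\in\Q}p_Q\chi_{(2s+1)Q}$ is equivalent to $x\in(2s+1)Q_{y,n}$. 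The crux is therefore the symmetric containment $y\in(2s+1)Q_{x,n}\Rightarrow x\in(2s+1)Q_{y,n}$ for $y$ interior to its dyadic cube: since $c_{x,n}-c_{y,n}\in 2^{-n}\Z^d$, the bound $|y-c_{x,n}|_\infty<(s+\frac12)2^{-n}$ combined with $|y-c_{y,n}|_\infty\leq \frac12 2^{-n}$ forces $|c_{x,n}-c_{y,n}|_\infty\leq s\cdot 2^{-n}$ by integer quantization, and the triangle inequality then gives $|x-c_{y,n}|_\infty\leq(s+\frac12)2^{-n}$. This integer quantization of dyadic centers is precisely why the enlargement factor is taken as the odd integer $2s+1$.
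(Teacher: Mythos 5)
The paper does not actually prove Theorem~2.7; Remark~2.8 defers the details to a forthcoming paper by Cadilhac, Conde-Alonso and Parcet and only hints at the key idea (replace Parcet's $i\vee j$ by $i\wedge j$, which kills the off-diagonal part of the good function). Your proposal reconstructs that missing proof, and the reconstruction is correct. The explicit formula $g = qfq + \sum_n p_n f_n p_n$ is exactly the good part one obtains from the $i\wedge j$ choice, and you have correctly isolated the mechanism: Cuculescu's commutation of $q_n$ with $q_{n-1}f_nq_{n-1}$, together with $q_{n-1}=p_n+q_n$ and $q_n p_n=0$, gives $q_{n-1}f_nq_{n-1}=q_nf_nq_n+p_nf_np_n$, which is precisely what makes $g_{\mathit{off}}$ vanish and makes the telescoping work. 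The $L_1$-telescoping, the trace/sup-norm bounds on $g$ (via dyadic doubling $f_Q\le 2^d f_{\widehat Q}$ and closedness of the positive cone for $qf_nq\to qfq$), the union bound for $\varphi(\1-\zeta)$, the mean-zero and $L_1$ bounds on $b_{d,n}$, and the reduction of the cancellation $\zeta(x)b_{d,n}(y)\zeta(x)=0$ to the purely geometric containment $y\in(2s+1)Q_{x,n}\Rightarrow x\in(2s+1)Q_{y,n}$ are all sound; the latter is genuinely where $s$ matters and your integer-quantization argument on $c_{x,n}-c_{y,n}\in 2^{-n}\Z^d$ is the right way to exploit the odd enlargement factor $2s+1$.

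Two small remarks. First, the geometric step only produces $|x-c_{y,n}|_\infty\le(s+\tfrac12)2^{-n}$, i.e.\ membership in the \emph{closed} enlarged cube, and your argument requires a strict inequality $|y-c_{x,n}|_\infty<(s+\tfrac12)2^{-n}$ to trigger the quantization $|c_{x,n}-c_{y,n}|_\infty\le s\,2^{-n}$; so the implication really holds off a Lebesgue-null set of boundary pairs $(x,y)$. Since $b_{d,n},b_n\in L_1$ and $\zeta$ is defined modulo null sets, this is harmless, but it is worth stating the cancellation as holding for a.e.\ $(x,y)$ (or fixing a half-open cube convention) rather than literally for all $x,y$. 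Second, the invocation of ``Doob's $L_1$-convergence $f_N\to f$'' in the telescoping step is not needed there; what is used is only $q_N\downarrow q$ strongly together with the absolute convergence $\sum_n\|p_nf_np_n\|_1=\varphi((\1-q)f)<\infty$, and then $\sum_n b_n$ converges in $L_1$ as the difference of the convergent telescoping series and $\sum_n b_{d,n}$. These are cosmetic; the argument is correct and fills a gap that the paper itself leaves open.
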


\begin{remark}\label{parcet}\rm
The details of the proof of Theorem \ref{czdecom} will appear in a forthcoming joint paper by Cadilhac, Conde-Alonso and Parcet, and Theorem \ref{czdecom} is one of the main results there. The new input in Cadilhac's decomposition (\ref{czd76789}) is to replace the maximum $i \vee j=\max(i,j)$ in
Parcet's decomposition (see \cite[(3.3)]{JP1}) by the minimum $i\wedge j=\min(i,j)$; one can then check easily that $g_\mathit{off}$ vanishes.
\end{remark}
\begin{remark}\label{comp1}\rm
For the purpose in this paper (see, e.g. (\ref{bad34789})), we will fix $s=4[\sqrt{d}]$ in the rest of the paper, where $[l]$ denotes the integer part of $l$.
\end{remark}

\section{Proof of Theorem \ref{p2}: two reductions}
To prove Theorem \ref{p2}, we give the two reductions in the present section.
\subsection{Reduction to real Calder\'on-Zygmund kernel}
\quad
\vskip0.24cm

Given a CZO $T$ and its truncated ones $T_\varepsilon$ with kernel $k$, the real and imaginary parts of the kernel are denoted by $\mathrm{Re}(k)$ and $\mathrm{Im}(k)$ respectively; accordingly, the corresponding operators are denoted by
 $$\mathrm{Re}(T_{\varepsilon})f(x)=\int_{|x-y|>\varepsilon}\mathrm{Re}(k)(x,y)f(y)dy$$\ \ and$$ \mathrm{Im}(T_{\varepsilon})f(x)=\int_{|x-y|>\varepsilon}\mathrm{Im}(k)(x,y)f(y)dy.$$

Due to the following observation,  we are able to assume that all the kernels are real, which will play an essential role in establishing the noncommutative maximal estimates.

\begin{lem}\label{mainlemma2}\rm
Let $T$ be a CZO with associated kernel $k$ satisfying (\ref{1}) and (\ref{6}) with $q=2$ and let $T_{\varepsilon}$ be defined as (\ref{maximal}). Then we have following basic facts.
\begin{itemize}
\item[(i)] Both $\mathrm{Re}(k)$ and $\mathrm{Im}(k)$ satisfy (\ref{1}) and (\ref{6}) with $q=2$.

\item[(ii)]  If $(T_{\varepsilon})_{\varepsilon>0}$ is of strong type $(p_0,p_0)$ for some $p_{0}\in(1,\infty)$, then both $(\mathrm{Re}(T_{\varepsilon}))_{\varepsilon>0}$ and $(\mathrm{Im}(T_{\varepsilon}))_{\varepsilon>0}$ are of strong type $(p_{0},p_{0})$.

\item[(iii)]  If $(\mathrm{Re}(T_{\varepsilon})f)_{\varepsilon>0}$ and $(\mathrm{Im}(T_{\varepsilon})f)_{\varepsilon>0}$ are of weak type $(1,1)$, so is $(T_{\varepsilon}f)_{\varepsilon>0}$.
\end{itemize}
\end{lem}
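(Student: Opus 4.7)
The three parts are all short and essentially routine; my plan is to treat them in order.

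For (i), I would simply observe that the conditions (\ref{1}) and (\ref{6}) with $q=2$ are pointwise integral estimates on $|k(x,y)|$ and on $|k(x,y+v)-k(x,y)|$ respectively. Since $|\mathrm{Re} z|,|\mathrm{Im} z|\le|z|$ for every $z\in\C$, both $\mathrm{Re}(k)$ and $\mathrm{Im}(k)$ satisfy the same pointwise size bound, and the corresponding quantities from (\ref{5}) satisfy $\delta_{2}^{\mathrm{Re}}(m),\delta_{2}^{\mathrm{Im}}(m)\le\delta_{2}(m)$, so (\ref{6}) is inherited.

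For (ii), the key idea is to introduce the auxiliary family $\widetilde T_{\varepsilon}$ defined by the conjugate kernel $\overline{k}(x,y)$. Since $k(x,y)$ is a scalar and $f$ is operator valued, a direct calculation shows
\[
\widetilde T_{\varepsilon} f \;=\; \bigl(T_{\varepsilon}(f^{*})\bigr)^{*}.
\]
The adjoint operation is an isometry on $L_{p_{0}}(\mathcal N)$; it is also an isometry on $L_{p_{0}}(\mathcal N;\ell_{\infty})$, because the factorization $x_{\varepsilon}=ay_{\varepsilon}b$ defining the $\ell_\infty$-valued norm turns into $x_{\varepsilon}^{*}=b^{*}y_{\varepsilon}^{*}a^{*}$ with the same $L_{2p_{0}}$-norms. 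Hence the strong $(p_{0},p_{0})$ bound of $(T_{\varepsilon})$ transfers verbatim to $(\widetilde T_{\varepsilon})$. Writing
\[
\mathrm{Re}(T_{\varepsilon})=\tfrac{1}{2}(T_{\varepsilon}+\widetilde T_{\varepsilon}),\qquad \mathrm{Im}(T_{\varepsilon})=\tfrac{1}{2i}(T_{\varepsilon}-\widetilde T_{\varepsilon}),
\]
and invoking the triangle inequality in the Banach space $L_{p_{0}}(\mathcal N;\ell_{\infty})$ ($p_{0}\ge 1$) finishes this part.

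For (iii), given $\lambda>0$ and $f\in L_{1}(\mathcal N)$, I would apply the weak-$(1,1)$ hypotheses to $\mathrm{Re}(T_{\varepsilon})f$ and $\mathrm{Im}(T_{\varepsilon})f$ at the level $\lambda/2$, producing projections $e_{1},e_{2}\in\mathcal N$ with
\[
\sup_{\varepsilon>0}\|e_{1}\mathrm{Re}(T_{\varepsilon})f\,e_{1}\|_{\infty}\le \tfrac{\lambda}{2},\quad \sup_{\varepsilon>0}\|e_{2}\mathrm{Im}(T_{\varepsilon})f\,e_{2}\|_{\infty}\le \tfrac{\lambda}{2},
\]
and $\varphi(e_{j}^{\perp})\le 2C\lambda^{-1}\|f\|_{1}$ for $j=1,2$. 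Set $e=e_{1}\wedge e_{2}$; then $e^{\perp}\le e_{1}^{\perp}+e_{2}^{\perp}$ gives $\varphi(e^{\perp})\le 4C\lambda^{-1}\|f\|_{1}$, and from $e\le e_{j}$ one has $e\,e_{j}=e_{j}\,e=e$, so that
\[
e\,T_{\varepsilon}f\,e \;=\; e\bigl(e_{1}\mathrm{Re}(T_{\varepsilon})f\,e_{1}\bigr)e \;+\; i\,e\bigl(e_{2}\mathrm{Im}(T_{\varepsilon})f\,e_{2}\bigr)e,
\]
and $\|e\,T_{\varepsilon}f\,e\|_{\infty}\le \lambda$ for every $\varepsilon>0$ follows by submultiplicativity. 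No real obstacle is anticipated; the only point that requires a word of justification is the invariance of the $L_{p}(\mathcal N;\ell_{\infty})$ norm under the adjoint, and this is immediate from the defining factorization.
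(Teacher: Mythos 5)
Your proposal is correct and follows essentially the same route as the paper: part (i) is the pointwise domination $\max\{|\mathrm{Re}(k)|,|\mathrm{Im}(k)|\}\le|k|$, part (ii) rests on the adjoint being an isometric isomorphism of $L_{p_0}(\mathcal N;\ell_\infty)$, and part (iii) is the quasi-triangle inequality in $\Lambda_{1,\infty}(\mathcal N;\ell_\infty)$ unpacked via $e=e_1\wedge e_2$. The only cosmetic difference is in (ii): the paper first reduces to $f=f^*$ and uses $\mathrm{Re}(T_\varepsilon)f=\tfrac12\bigl(T_\varepsilon f+(T_\varepsilon f)^*\bigr)$, whereas you work with the conjugate-kernel operator and the identity $\widetilde T_\varepsilon f=(T_\varepsilon(f^*))^*$ directly, which is equivalent.
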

\begin{proof}
(i) Note that $\max\{|\mathrm{Re}(k)|,|\mathrm{Im}(k)|\}\leq|k|$, it is easy to verify that $\mathrm{Re}(k)$ and $\mathrm{Im}(k)$ satisfy the conditions (\ref{1}) and (\ref{6}) with $q=2$.

(ii) We only consider $(\mathrm{Re}(T_{\varepsilon}))_{\varepsilon>0}$ since the argument for $(\mathrm{Im}(T_{\varepsilon}))_{\varepsilon>0}$ is similar. Let $f\in L_{p_0}(\mathcal N)$.
To estimate $L_{p_0}(\mathcal N;\ell_\infty)$-norm of $(\mathrm{Re}(T_{\varepsilon})f)_{\varepsilon>0}$, by the triangle inequalities and the facts that $L_{p_0}$-norm of $\mathrm{Re}(f)$ and $\mathrm{Im}(f)$ are dominated by $\|f\|_{p_0}$, it suffices to assume $f=f^*$. Then the claim follows from $\mathrm{Re}(T_{\varepsilon})f=\frac{1}{2}\big(T_{\varepsilon}f+(T_{\varepsilon}f)^{\ast}\big)$  for any $\varepsilon>0$ and the fact that the adjoint map is an isometric isomorphism on $L_{p_0}(\mathcal N;\ell_\infty)$.

(iii) This assertion follows from $T_{\varepsilon}f=\mathrm{Re}(T_{\varepsilon})f+i\mathrm{Im}(T_{\varepsilon})f$ for any $\varepsilon$ and the quasi-norm of $\Lambda_{1,\infty}(\mathcal N; \ell_\infty)$.
\end{proof}


\subsection{Reduction to the maximal estimates of lacunary subsequence}
\quad
\vskip0.24cm

In this subsection, we reduce the study of $(T_{\varepsilon})_{\varepsilon>0}$ to its lacunary subsequence.

Let
$\phi$ be  a smooth radial nonnegative function  on $\R^{d}$ such that $\mathrm{supp}\;\phi\subset\{x\in \R^{d}:1/2\leq|x|\leq2\}$ and $\sum_{i\in\Z}\phi_{i}(x)=1$ for all $x\in\R^{d}\setminus \{0\}$, where $\phi_{i}(x)=\phi(2^{i}x/\sqrt{d})$.
This $\phi$ will be fixed in the whole paper.
Consequently, for a reasonable $f$,
$$T_{\varepsilon}f(x)=\sum_{i\in\Z}\int_{|x-y|>\varepsilon}k(x,y)\phi_{i}(x-y)f(y)dy.$$
For the sake of convenience, let $k^{\phi}_{i}(x,y)$ denote the kernel $k(x,y)\phi_{i}(x-y)$ and set $\Delta_{i}=\{x\in \R^{d}:2^{-i-1}\sqrt{d}\leq|x|\leq2^{-i+1}\sqrt{d}\}$. Then $\mathrm{supp}\;\phi_{i}\subset\Delta_{i}$. Note that
$$\int_{|x-y|>\varepsilon}k^{\phi}_{i}(x,y)f(y)dy$$
vanishes if the intersection of $x+\Delta_{i}$ and $\{y\in \R^{d}:|x-y|>\varepsilon\}$ is empty. This implies $2^{-i+1}\sqrt{d}>\varepsilon$ and thus $i<\log_{2}(\frac{2\sqrt{d}}{\varepsilon})$ by a simple calculation. Set $j_\varepsilon\triangleq[\log_{2}(\frac{2\sqrt d}{\varepsilon})]$. With these conventions and observations, we may write $T_{\varepsilon}f(x)$ as
\begin{equation}\label{max191}
T_{\varepsilon}f(x)=\sum_{i:i\leq j_\varepsilon-1}
\int_{\R^{d}}k^{\phi}_{i}(x,y)f(y)dy
+\int_{|x-y|>\varepsilon}k^{\phi}_{j_\varepsilon}(x,y)f(y)dy.
\end{equation}
For convenience, let us write the second term above as $T^{\phi}_{\varepsilon,j_\varepsilon}f(x)$.
\begin{prop}\label{HLX1}\rm
Let $T$ be a CZO with kernel $k$ satisfying the assumptions in Theorem \ref{p2}. Then
\begin{itemize}
\item[(i)] $(T^{\phi}_{\varepsilon,j_\varepsilon})_{\varepsilon>0}$ is of weak type $(1,1)$. More precisely,
for any $\lambda>0$ and $f\in L_{1}(\mathcal{N})$, there exists a projection $\eta\in \mathcal N$ such that
$$\sup_{\varepsilon>0}\big\|\eta (T^{\phi}_{\varepsilon,j_\varepsilon}f)\eta\big\|_\infty\lesssim\lambda\quad\mbox{and}\quad \varphi (\eta^{\perp})\lesssim \frac{\|f\|_1}{\lambda};$$

\item[(ii)] $(T^{\phi}_{\varepsilon,j_\varepsilon})_{\varepsilon>0}$ is of strong type $(p,p)$ for all $1<p\leq\infty$.
\end{itemize}
\end{prop}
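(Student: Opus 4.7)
The plan is to dominate $T^{\phi}_{\varepsilon,j_{\varepsilon}}f$ in a two-sided operator sense by a constant multiple of the Hardy--Littlewood averaging operator $M_{2\varepsilon}$ defined in \eqref{e:20Mr}, and then to invoke Mei's noncommutative Hardy--Littlewood maximal inequalities \cite{M} to obtain both (i) and (ii) at once. The crucial observation is a purely elementary kernel estimate. By the choice $j_{\varepsilon}=[\log_{2}(2\sqrt{d}/\varepsilon)]$ one has $\varepsilon/(2\sqrt{d})< 2^{-j_{\varepsilon}}\leq \varepsilon/\sqrt{d}$, and hence $\mathrm{supp}\,\phi_{j_{\varepsilon}}\subset\Delta_{j_{\varepsilon}}\subset\{u\in\R^{d}:|u|\leq 2\varepsilon\}$; combined with the size condition \eqref{1} this yields
\[
\bigl|k^{\phi}_{j_{\varepsilon}}(x,y)\bigr|\;\lesssim\; \varepsilon^{-d}\,\mathbf{1}_{\{|x-y|\leq 2\varepsilon\}},
\]
which is, up to a dimensional constant, precisely the kernel of $M_{2\varepsilon}$.

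First I would perform the standard reductions. By Lemma \ref{mainlemma2} and the decomposition $T_{\varepsilon}=\mathrm{Re}(T_{\varepsilon})+i\,\mathrm{Im}(T_{\varepsilon})$ one may assume $k$ is real-valued. Writing an arbitrary $f\in L_{p}(\mathcal N)$ as $(a-b)+i(c-d)$ via real/imaginary parts followed by Jordan decompositions, with $a,b,c,d\in L_{p}(\mathcal N)_{+}$ and each of $L_{p}$-norm at most $\|f\|_{p}$, and using the (quasi-)triangle inequality for the strong and weak maximal norms, it suffices to treat an arbitrary $f\in L_{p}(\mathcal N)_{+}$. For such $f$, split the scalar kernel as $k=k_{+}-k_{-}$ with $k_{\pm}\geq 0$ and $k_{\pm}\leq|k|$, whence $T^{\phi}_{\varepsilon,j_{\varepsilon}}f=T^{+}_{\varepsilon}f-T^{-}_{\varepsilon}f$ where
\[
T^{\pm}_{\varepsilon}f(x)\;=\;\int k_{\pm}(x,y)\,\phi_{j_{\varepsilon}}(x-y)\,\chi_{\{|x-y|>\varepsilon\}}\,f(y)\,dy.
\]
Each $T^{\pm}_{\varepsilon}f$ is the integral of a positive operator-valued function against a nonnegative scalar weight and is therefore a positive self-adjoint element of $\mathcal N$; moreover the kernel estimate above forces $T^{\pm}_{\varepsilon}f\leq C_{d}\, M_{2\varepsilon}f$. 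Consequently $T^{\phi}_{\varepsilon,j_{\varepsilon}}f$ is self-adjoint and
\[
-C_{d}\, M_{2\varepsilon}f\;\leq\;T^{\phi}_{\varepsilon,j_{\varepsilon}}f\;\leq\;C_{d}\, M_{2\varepsilon}f\qquad\text{for every }\varepsilon>0.
\]

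Finally, invoking Mei's noncommutative Hardy--Littlewood maximal inequalities \cite{M} for the uncountable family $(M_{r})_{r>0}$---strong type $(p,p)$ for $1<p\leq\infty$, and weak type $(1,1)$ producing, for each $\lambda>0$, a single projection $\eta\in\mathcal N$ with $\varphi(\eta^{\perp})\lesssim \|f\|_{1}/\lambda$ and $\|\eta M_{2\varepsilon}f\eta\|_{\infty}\leq\lambda$ for every $\varepsilon>0$---combined with the operator-theoretic characterisations of the strong and weak maximal norms for self-adjoint families in Remarks \ref{rk:MaxFunct} and \ref{weakm4}, the two-sided operator bound above immediately transfers these estimates from $(M_{2\varepsilon})_{\varepsilon>0}$ to $(T^{\phi}_{\varepsilon,j_{\varepsilon}})_{\varepsilon>0}$ (with a loss of a factor $C_{d}$), which yields both (i) and (ii). The only conceptual obstacle in this argument is the non-positivity of $T^{\phi}_{\varepsilon,j_{\varepsilon}}$, which is precisely what the kernel splitting $k=k_{+}-k_{-}$ addresses; once this splitting and the reduction to $f\geq 0$ are in place, the rest is bookkeeping.
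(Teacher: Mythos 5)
Your proof is correct and takes essentially the same route as the paper's: both arguments establish the two-sided operator bound $-C_d M_r f \le T^{\phi}_{\varepsilon,j_\varepsilon}f \le C_d M_r f$ (for a positive $f$ and real kernel, after the same reductions via Lemma \ref{mainlemma2}) and then transfer Mei's weak $(1,1)$ and strong $(p,p)$ maximal estimates for the Hardy--Littlewood averaging family through Remarks \ref{rk:MaxFunct} and \ref{weakm4}. The only cosmetic difference is that you make the self-adjointness of $T^{\phi}_{\varepsilon,j_\varepsilon}f$ explicit via the decomposition $k = k_{+}-k_{-}$, whereas the paper directly compares $k^\phi_{j_\varepsilon}(x,y)f(y)$ with $|k^\phi_{j_\varepsilon}(x,y)|f(y)$ under the integral sign.
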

\begin{proof}
(i) By the quasi-triangle inequality, we can assume that $f$ is positive; by Lemma \ref{mainlemma2}, $k$ can be assumed to be real.

Applying the size condition (\ref{1}) of the kernel $k$ as well as the support of $\phi_{j_{\varepsilon}}$, we find that for any $\varepsilon>0$
\begin{align*}
  T^{\phi}_{\varepsilon,j_\varepsilon}f(x)& \leq\int_{|x-y|>\varepsilon}|k^{\phi}_{j_\varepsilon}(x,y)|f(y)dy\leq\int_{|x-y|\leq2^{-j_\varepsilon+1}\sqrt d}|k^{\phi}_{j_\varepsilon}(x,y)|f(y)dy\\
  & \lesssim\int_{|x-y|\leq2^{-j_\varepsilon+1}\sqrt d}\frac{\phi_{j_\varepsilon}(x-y)}{|x-y|^{d}}f(y)dy
  \lesssim M_{2^{-j_\varepsilon+1}\sqrt d} f(x),
\end{align*}
where $M_r$ is the Hardy-Littlewood averaging operator.  On the other hand, note that
$$T^{\phi}_{\varepsilon,j_\varepsilon}f(x) \geq-\int_{|x-y|>\varepsilon}|k^{\phi}_{j_\varepsilon}(x,y)|f(y)dy,$$
thus one gets
$-M_{2^{-j_\varepsilon+1}\sqrt d}f(x)\lesssim T^{\phi}_{\varepsilon,j_\varepsilon}f(x).$ Hence,
\begin{align}\label{exaq1}
-M_{2^{-j_\varepsilon+1}\sqrt d}f(x)\lesssim T^{\phi}_{\varepsilon,j_\varepsilon}f(x)\lesssim M_{2^{-j_\varepsilon+1}\sqrt d}f(x).
\end{align}
We now appeal to Mei's noncommutative Hardy-Littlewood maximal weak type $(1,1)$ inequality \cite{M}: there exists a projection $\eta\in \mathcal{N}$ such that for any $\varepsilon>0$
$$\varphi(\eta^{\perp})\lesssim \frac{\|f\|_{1}}{\lambda}\ \ \mbox{and}\ \ \eta M_{2^{-j_{\varepsilon}+1}\sqrt d}f\eta\leq\lambda.$$
We then deduce that for any $\varepsilon>0$,
$$-\lambda\leq-\eta M_{2^{-j_{\varepsilon}+1}\sqrt d}f\eta\lesssim \eta T^{\phi}_{\varepsilon,j_\varepsilon}f\eta\lesssim \eta M_{2^{-j_{\varepsilon}+1}\sqrt d}f\eta\leq\lambda.$$
This, by Remark \ref{weakm4}, gives the desired weak type $(1,1)$ maximal estimate of $(T^{\phi}_{\varepsilon,j_\varepsilon})_{\varepsilon>0}$.

(ii) By noting Remark \ref{rk:MaxFunct}, the strong type $(p,p)$ estimate of $(T^{\phi}_{\varepsilon,j_\varepsilon})_{\varepsilon>0}$ is a consequence of (\ref{exaq1}) and Mei's noncommutative Hardy-Littlewood maximal strong type $(p,p)$ ($1<p\leq\infty$) inequality \cite{M}.
\end{proof}

By the quasi-triangle inequality in $\Lambda_{1,\infty}(\mathcal{N},\ell_{\infty})$ and Proposition \ref{HLX1}, to establish Theorem \ref{p2}, it suffices to prove the desired maximal weak type $(1,1)$ result of the first term on the right-hand side of (\ref{max191}). For convenience, denote the operator $T^\phi_{j}$ by
\begin{equation}\label{0090}
T^\phi_{j}f(x)=\sum_{i:i< j}\int_{\R^{d}}k^{\phi}_{i}(x,y)f(y)dy.
\end{equation}
\begin{thm}\label{HLX2}\rm
Let $T$ be a CZO with kernel $k$ satisfying the assumptions in Theorem \ref{p2} and let $T^\phi_{j}$ be defined as (\ref{0090}).  Then the sequence of linear operators $(T^\phi_{j})_{j\in\mathbb Z}$ is of weak type $(1,1)$. More precisely, for any $f\in L_1(\mathcal N)$ and $\lambda>0$, there exists a projection $e\in\mathcal{N}$ such that
\begin{align*}
\sup_{j\in\Z}\|e(T^{\phi}_{j}f) e\|_\infty\lesssim\lambda\ \ \text{and}
\ \ \varphi(e^{\perp})\lesssim\frac{\|f\|_1}{\lambda}.
\end{align*}
\end{thm}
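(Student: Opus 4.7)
My plan is to apply Cadilhac's noncommutative Calder\'on-Zygmund decomposition (Theorem \ref{czdecom}) to $f$ at level $\lambda$ and estimate the three pieces of the resulting splitting separately. By Lemma \ref{mainlemma2} combined with the quasi-triangle inequality in $\Lambda_{1,\infty}(\mathcal N;\ell_\infty)$, I may assume $k$ is real-valued, and by density and a standard positivity reduction, that $f\in\mathcal{N}_{c,+}$. Write $f=g+b_d+b_\mathit{off}$ with the projections $q_k,p_k,\zeta$ of Theorem \ref{czdecom}; in particular $\varphi(\1_{\mathcal N}-\zeta)\lesssim\|f\|_1/\lambda$, and because $k$ is real and $f$ selfadjoint, each of $T^\phi_j g,\,T^\phi_j b_d,\,T^\phi_j b_\mathit{off}$ is selfadjoint, which will let me use Remark \ref{weakm4}.

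For the good part I would first upgrade hypothesis \eqref{p condition} to a strong-type $(p_0,p_0)$ maximal estimate for the lacunary sequence $(T^\phi_j)_{j\in\mathbb Z}$: restricting $\varepsilon$ to the values $\varepsilon=2\sqrt d\cdot 2^{-j}$ in \eqref{max191} (so that $j_\varepsilon=j$) and using the triangle inequality in $L_{p_0}(\mathcal N;\ell_\infty)$ together with Proposition \ref{HLX1}(ii), I get $\|(T^\phi_j h)_j\|_{L_{p_0}(\mathcal N;\ell_\infty)}\lesssim\|h\|_{p_0}$. Applying this to $g$, which satisfies $\|g\|_{p_0}^{p_0}\leq\|g\|_\infty^{p_0-1}\|g\|_1\lesssim\lambda^{p_0-1}\|f\|_1$, and invoking Remark \ref{rk:MaxFunct} yields a positive $a_g\in L_{p_0}(\mathcal N)_+$ with $-a_g\leq T^\phi_j g\leq a_g$ for all $j$ and $\|a_g\|_{p_0}^{p_0}\lesssim\lambda^{p_0-1}\|f\|_1$. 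Setting $e_g:=\chi_{[0,\lambda]}(a_g)$, a Chebyshev estimate gives $\varphi(e_g^\perp)\lesssim\|f\|_1/\lambda$ and $\|e_g\,T^\phi_j g\,e_g\|_\infty\leq\lambda$ for every $j$.

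The heart of the proof is the bad-part estimate. For $b\in\{b_d,b_\mathit{off}\}$ I would aim to prove the maximal $L_2$-bound
\begin{equation*}
\big\|(\zeta\,T^\phi_j b\,\zeta)_{j\in\mathbb Z}\big\|_{L_2(\mathcal N;\ell_\infty)}^{2}\;\lesssim\;\lambda\,\|f\|_1,
\end{equation*}
and then extract, again via Remark \ref{rk:MaxFunct}, a dominating positive $a_b\in L_2(\mathcal N)_+$; Chebyshev on $a_b$ produces a projection $e_b:=\chi_{[0,\lambda]}(a_b)$ with $\varphi(e_b^\perp)\lesssim\|f\|_1/\lambda$ and $\|e_b\,\zeta T^\phi_j b\,\zeta\,e_b\|_\infty\leq\lambda$ for all $j$. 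To prove the display I would decompose $T^\phi_j=\sum_{i<j}K^\phi_i$ (with $K^\phi_i$ the operator of kernel $k^\phi_i$) and $b=\sum_n b_n$, and combine three ingredients: (a) the cancellation $\int_Q b_n=0$ on every $Q\in\mathcal Q_n$, allowing the substitution $k^\phi_{<j}(x,y)\mapsto k^\phi_{<j}(x,y)-k^\phi_{<j}(x,c_Q)$; (b) the crucial support condition $\zeta(x)b_n(y)\zeta(x)=0$ for $y\in(2s+1)Q_{x,n}$ supplied by Theorem \ref{czdecom}, which confines the effective integration to $|x-y|\gtrsim 2^{-n}$; (c) the $L_2$-integral regularity \eqref{6} with $q=2$, summing the oscillation quantities $\delta_2(m)$ over the dyadic annuli $|x-y|\sim 2^{m-n}$. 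The $\sup_j$ is absorbed by majorising it with the column $\ell^2$-sum over the dyadic scales, exploiting the almost-orthogonality of the frequency pieces $K^\phi_i$ and the telescoping $T^\phi_j-T^\phi_{j-1}=K^\phi_{j-1}$.

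Finally I would take $e:=\zeta\wedge e_g\wedge e_{b_d}\wedge e_{b_\mathit{off}}$; standard projection subadditivity yields $\varphi(e^\perp)\lesssim\|f\|_1/\lambda$, and since $e\leq\zeta$ and $e\leq e_g,e_{b_d},e_{b_\mathit{off}}$, the linearity of $T^\phi_j$ produces $\|e\,T^\phi_j f\,e\|_\infty\lesssim\lambda$ for every $j$. The principal obstacle is clearly the maximal $L_2$-bound of the third paragraph: upgrading a single-operator $L_2$-estimate to the $L_2(\mathcal N;\ell_\infty)$ version under the weaker $L_2$-integral regularity (rather than the pointwise Lipschitz condition used in \cite{JP1}), and handling the off-diagonal piece $b_\mathit{off}=\sum_n\bigl(p_n(f-f_n)q_n+q_n(f-f_n)p_n\bigr)$, for which the interplay between the Cuculescu projections $p_n,q_n$ and the kernel oscillations must be tracked carefully. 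The decisive payoff of Cadilhac's new decomposition (Remark \ref{parcet}) is that the off-diagonal good part $g_\mathit{off}$ vanishes, so Parcet's pseudo-localization principle is not needed here\textemdash a feature without which it is unclear whether the maximal estimate could be attained under only \eqref{6}.
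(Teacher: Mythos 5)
The reductions (to real $k$, to $f\in\mathcal N_{c,+}$), the use of Cadilhac's decomposition, and the good-part estimate via $\|(T^\phi_j)_j\|_{L_{p_0}\to L_{p_0}(\ell_\infty)}$ plus Chebyshev all match the paper. The gap is in your treatment of the bad parts. You propose to prove a maximal $L_2$-bound $\|(\zeta T^\phi_j b\,\zeta)_j\|_{L_2(\mathcal N;\ell_\infty)}^2\lesssim\lambda\|f\|_1$ and then Chebyshev. That is not the right target, and the route you sketch for it (``almost-orthogonality'' of the $K^\phi_i$ and a telescoping identity) does not close: a square-function or almost-orthogonality argument controls $\ell^2$-sums, not $\ell^\infty$-maxima, and the bad functions are not in $L_2$ in any quantitative way that would make $\lambda\|f\|_1$ appear without extra information. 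Under only the $L_2$-integral regularity hypothesis there is no Cotlar-type pointwise control available, so upgrading a single-operator bound to an $L_2(\ell_\infty)$ maximal bound for $b$ is precisely the kind of step the rest of the paper is designed to avoid.

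What the paper does instead is considerably simpler and does not require any maximal estimate for the bad parts. Because $k$ is real and each $b_{d,n}$ (resp.\ $b_n$) is selfadjoint, after using the cancellation to replace $k^\phi_i(x,y)$ by $k^\phi_{i,n}(x,y)=k^\phi_i(x,y)-k^\phi_i(x,c_{y,n})$, every integral $\int_Q k^\phi_{i,n}(x,y)b_{d,n}(y)\,dy$ is a selfadjoint operator, hence bounded above and below by its modulus. Since the support condition $\zeta(x)b_{d,n}(y)\zeta(x)=0$ for $y\in(2s+1)Q_{x,n}$ forces $\zeta T_{\phi,i}b_{d,n}\zeta=0$ unless $i<n-1$, the partial sums $\sum_{i<j,\,i<n-1}$ defining $\zeta T^\phi_j b_d\zeta$ are, for every $j$, squeezed between $\pm\zeta F_1\zeta$ where $F_1=\sum_n\sum_{i<n-1}\big|\int k^\phi_{i,n}(\cdot,y)b_{d,n}(y)dy\big|$ is a single $j$-independent positive operator. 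Lemma~\ref{HLX6} (with $q=1$) and condition \eqref{6} then give $\|F_1\|_1\lesssim\|f\|_1$, and one takes $e_{2,2}=\chi_{(0,\lambda]}(\zeta F_1\zeta)$ and Chebyshev's inequality. The off-diagonal part is handled the same way with a dominating operator $F_2$, except that there the $L_2$-versions of the kernel estimates (Lemma~\ref{HLX13}) and the Cauchy--Schwarz/H\"older inequalities from Lemma~\ref{mainlemma} are needed to bound $\|F_2\|_1\lesssim\|f\|_1$, using the Cuculescu bound $q_Q f_Q q_Q\le\lambda q_Q$ and the inequality $\sum_n\varphi(p_n)\cdot\lambda\le\|f\|_1$. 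In short: the paper never proves (and does not need) a maximal $L_2$ estimate for $b$; it proves an $L_1$-norm estimate on a single dominating operator, with the sup over $j$ absorbed for free because all the partial sums lie under the same full sum of moduli. Your sketch of ingredients (a)--(c) is correct, but you should replace the proposed $L_2(\mathcal N;\ell_\infty)$ maximal bound by this domination-plus-$L_1$ argument, and make the selfadjointness observation explicit since it is what makes the domination legitimate.
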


\section{Proof of Theorem \ref{p2}: maximal estimate of lacunary subsequence $(T^\phi_j)_j$}

In this section, we will complete the proof of Theorem \ref{p2} by showing Theorem \ref{HLX2}.

According to Lemma \ref{mainlemma2}, we suppose that the kernel $k$ is real. By decomposing any element into linear combination of four  positive elements and recalling that $\mathcal N_{c,+}$ is dense in $L_1(\mathcal N)_{+}$ and by  the standard density argument, it suffices to show the maximal weak type $(1,1)$ estimate for $f\in \mathcal N_{c,+}$. Now fix one $f\in \mathcal N_{c,+}$ and a $\lambda\in(0,+\infty)$. Without loss of generality, we may assume $m_{\lambda}(f)=0$. By the noncommutative Calder\'on-Zygmund decomposition in Theorem \ref{czdecom}, we can decompose $f$ as $f=g+b_d+b_\mathit{off}$. Thus it suffices to find a projection $e\in \mathcal N$ such that
\begin{align*}
\forall j\in\mathbb Z,\;\max\Big\{\|e(T^{\phi}_{j}g) e\|_\infty,\;\|e(T^{\phi}_{j}b_d) e\|_\infty,\;\|e(T^{\phi}_{j}b_\mathit{off}) e\|_\infty\Big\}\leq\lambda\ \ \text{and}
\ \ \varphi(e^{\perp})\lesssim\frac{\|f\|_1}{\lambda},
\end{align*}
which will follow from, by setting $e=e_1\wedge e_2\wedge e_3$, the existences of three projections $e_1,e_2$ and $e_3$ such that
\begin{align}\label{HLX15}
\sup_{j\in\mathbb Z}\|e_{1} T^{\phi}_{j}g e_{1}\|_{\infty}\leq\lambda\ \ \mbox{and}\ \ \varphi(e_{1}^{\perp})\lesssim\frac{\| f\|_{1}}{\lambda},
\end{align}

\begin{align}\label{HLX7}
\sup_{j\in\mathbb Z}\|e_{2}T^{\phi}_{j}b_{d}e_{2}\|_{\infty}\leq\lambda\ \ \mbox{and}\ \ \varphi(e_{2}^{\perp})\lesssim\frac{\| f\|_{1}}{\lambda},
\end{align}
and
\begin{align}\label{HLX9}
\sup_{j\in\mathbb Z}\|e_{3}T^{\phi}_{j}b_\mathit{off}e_{3}\|_{\infty}\leq\lambda\ \ \mbox{and}\ \ \varphi(e_{3}^{\perp})\lesssim\frac{\| f\|_{1}}{\lambda}.
\end{align}


\subsection{Estimate for the good function $g$: \eqref{HLX15}}\quad

\medskip

The following lemma will be used in estimating $(T^{\phi}_{j}g)_{j\in\mathbb Z}$.
\begin{lem}\label{HLX31}\rm
The sequence of operators $(T^\phi_{j})_{j\in\mathbb Z}$ is of strong type $(p_0,p_0)$.
\end{lem}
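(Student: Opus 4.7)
The plan is to express the lacunary operators $T^\phi_j$ as differences of operators whose $L_{p_0}(\mathcal N;\ell_\infty)$-maximal norms are already under control, and then invoke the triangle inequality in $L_{p_0}(\mathcal N;\ell_\infty(\mathbb Z))$.

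First I would exploit identity (\ref{max191}), which provides, for every $\varepsilon>0$,
\[
T_\varepsilon f \;=\; T^\phi_{j_\varepsilon} f + T^\phi_{\varepsilon, j_\varepsilon} f,
\qquad\text{equivalently,}\qquad
T^\phi_{j_\varepsilon} f \;=\; T_\varepsilon f - T^\phi_{\varepsilon, j_\varepsilon} f.
\]
Next I would observe that the assignment $\varepsilon\mapsto j_\varepsilon=[\log_2(2\sqrt d/\varepsilon)]$ is surjective onto $\mathbb Z$: the particular choice $\varepsilon_j:=2\sqrt d\cdot 2^{-j}$ yields $j_{\varepsilon_j}=j$. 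Consequently, for every $j\in\mathbb Z$ one obtains
\[
T^\phi_j f \;=\; T_{\varepsilon_j} f - T^\phi_{\varepsilon_j, j} f.
\]

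To conclude, I would apply the triangle inequality in the Banach space $L_{p_0}(\mathcal N;\ell_\infty(\mathbb Z))$ together with the monotonicity of $\|\cdot\|_{L_{p_0}(\mathcal N;\ell_\infty(I))}$ under shrinking of the index set $I$ (which follows immediately from the factorization definition of these norms). Since $(T_{\varepsilon_j})_{j\in\mathbb Z}$ and $(T^\phi_{\varepsilon_j, j})_{j\in\mathbb Z}$ are subfamilies of $(T_\varepsilon)_{\varepsilon>0}$ and $(T^\phi_{\varepsilon, j_\varepsilon})_{\varepsilon>0}$ respectively, this produces
\[
\bigl\|(T^\phi_j f)_{j\in\mathbb Z}\bigr\|_{L_{p_0}(\mathcal N;\ell_\infty)} \;\leq\; \bigl\|(T_\varepsilon f)_{\varepsilon>0}\bigr\|_{L_{p_0}(\mathcal N;\ell_\infty)} + \bigl\|(T^\phi_{\varepsilon, j_\varepsilon} f)_{\varepsilon>0}\bigr\|_{L_{p_0}(\mathcal N;\ell_\infty)}.
\]
The first summand is controlled by $\|f\|_{p_0}$ via the standing hypothesis \eqref{p condition}, and the second by $\|f\|_{p_0}$ thanks to Proposition \ref{HLX1}(ii); summing these yields the desired strong type $(p_0,p_0)$ maximal estimate.

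The argument is essentially a reorganization of already-available inequalities, so no genuine analytic obstacle arises here. The only mild subtlety is the surjectivity of $\varepsilon\mapsto j_\varepsilon$, which guarantees that every $j\in\mathbb Z$ can be realized as $j_{\varepsilon_j}$ for some $\varepsilon_j>0$, making the above decomposition legitimate for each term of the lacunary family.
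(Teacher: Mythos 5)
Your proof is correct and follows essentially the same route as the paper's: decompose $T^\phi_j = T_{\varepsilon_j} - T^\phi_{\varepsilon_j,j}$ via identity \eqref{max191}, then apply the triangle inequality in $L_{p_0}(\mathcal N;\ell_\infty)$ using the hypothesis \eqref{p condition} and Proposition \ref{HLX1}(ii). The only difference is that you spell out the surjectivity of $\varepsilon\mapsto j_\varepsilon$ and the monotonicity of maximal norms under index-set restriction, details the paper leaves implicit.
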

\begin{proof}
From (\ref{max191}), for any $j\in\mathbb Z$, there exists one $\varepsilon=\varepsilon_j$ such that $j=j_\varepsilon$ and
$$T^{\phi}_{j} =
T_{\varepsilon_j}-T^{\phi}_{\varepsilon_{j},j_\varepsilon}.
$$
Note that the strong type $(p_0,p_0)$ of $(T_{\varepsilon_j})_{j\in\mathbb Z}$ (resp. $(T^{\phi}_{\varepsilon_{j},j_\varepsilon})_{j\in\mathbb Z}$) follows from the corresponding assumption (resp. conclusion (ii)) in Theorem \ref{p2} (resp. Proposition \ref{HLX1}). Thus by the triangle inequality, we finish the proof.
\end{proof}

Now we are ready to prove estimate \eqref{HLX15}.
\begin{proof}[Proof of estimate \eqref{HLX15}.]
The desired maximal weak type $(1,1)$ estimate for the diagonal part of the good function can be deduced from conclusion (ii) in Theorem \ref{czdecom} and Lemma \ref{HLX31}. Indeed, since $g$ is positive stated in conclusion (ii) in Theorem \ref{czdecom} and $k$ is real-valued, $T^{\phi}_{j}g$ is selfadjoint. Applying Lemma \ref{HLX31} and Remark \ref{rk:MaxFunct}, we can find a positive operator $a\in L_{p_{0}}(\mathcal{N})$ such that for any $j\in\Z$,
$$-a\leq T^{\phi}_{j}g\leq a\ \ \ \mbox{and}\ \ \ \|a\|_{p_0}\lesssim\|g\|_{p_0}.$$
Set $e_{1}=\chi_{(0,\lambda]}(a)$. Then for any $j\in\Z$,
$$-\lambda\leq-e_{1}ae_{1}\leq e_{1}T^{\phi}_{j}ge_{1}\leq e_{1}ae_{1}\leq\lambda.$$
On the other hand, by the Chebyshev and H\"{o}lder inequalities,
$$\varphi(e_{1}^{\perp})=\varphi(\chi_{(\lambda,\infty)}(a))\leq
\frac{\|a\|^{p_{0}}_{p_{0}}}{\lambda^{p_{0}}}\lesssim\frac{\|g\|^{p_{0}}_{p_{0}}}{\lambda^{p_{0}}}\leq \frac{\|g\|_1\|g\|^{p_{0}-1}_{\infty}}{\lambda^{p_{0}}}\lesssim\frac{\|f\|_{1}}{\lambda},$$
where in the last inequality we used conclusion (ii) stated in Theorem \ref{czdecom}. This, by Remark \ref{weakm4}, provides the desired estimate for $(T^{\phi}_{j}g)_{j\in\mathbb Z}$.
\end{proof}


\subsection{Estimate for the diagonal part of the bad function $b_d$: \eqref{HLX7}}\quad

\medskip

Using the projection $\zeta$ constructed in (\ref{czd9}), we decompose $T^{\phi}_{j}b_d$ in the following way $$T^{\phi}_{j}b_d= \zeta^\perp T^{\phi}_{j}b_d \zeta^\perp + \zeta \hskip1pt T^{\phi}_{j}b_d\zeta^\perp
+ \zeta^\perp T^{\phi}_{j}b_d\zeta + \zeta \hskip1pt T^{\phi}_{j}b_d
\zeta.$$
Then taking $e_{2,1}=\zeta$, we are reduced to finding a projection $e_{2,2}$ such that
\begin{align}\label{bd}
\sup_{j\in\mathbb Z}\|e_{2,2}\zeta T^{\phi}_{j}b_d\zeta e_{2,2}\|_{\infty}\leq\lambda\ \ \mbox{and}\ \ \varphi(e_{2,2}^{\perp})\lesssim\frac{\| f\|_{1}}{\lambda}.
\end{align}
Indeed, taking $e_2=e_{1,2}\wedge e_{2,2}$, then we obtain for any $j\in\Z$
$$\|e_{2}T^{\phi}_{j}b_de_{2}\|_\infty\leq \|e_{2,2}\zeta T^{\phi}_{j}b_d\zeta e_{2,2}\|_{\infty}\leq\lambda.$$
Together with (\ref{bd}), conclusion (i) in Theorem \ref{czdecom} and recalling that $s=4[\sqrt{d}]$ announecd in Remark \ref{comp1}, we get
$$\varphi(e_{2}^{\perp})\leq\varphi(e^{\perp}_{2,1})+\varphi(e^{\perp}_{2,2})\lesssim\frac{\| f\|_{1}}{\lambda}.$$
Thus we get estimate \eqref{HLX7}.





Define the operator $T_{\phi,i}$ as
\begin{align}\label{4}
T_{\phi,i}f(x)=\int_{\R^{d}}k^{\phi}_{i}(x,y)f(y)dy,
\end{align}
then $T^{\phi}_{j}=\sum_{i:i< j}T_{\phi,i}$. For $i,n\in\mathbb Z$ and $x,y\in\mathbb R^d$, we define
\begin{align}\label{4789}
k^{\phi}_{i,n}(x,y)=\big(k^{\phi}_{i}(x,y) - k^{\phi}_{i}(x,c_{y,n})\big).
\end{align}

\begin{lem}\label{HLX6}\rm
For $i<n-1$ and $1\leq q<\infty$, the following estimate holds
\begin{align}\label{bad34}
\sup_{y\in\R^{d}}\int_{\R^{d}}|k^{\phi}_{i,n}(x,y)|^{q}dx\lesssim2^{id(q-1)}\big(\delta^{q}_{q}(n-i)+
\delta^{q}_{q}(n-i+1)+2^{q(i-n)}\big),
\end{align}
where $\delta_q$ was defined in \eqref{5}.
\end{lem}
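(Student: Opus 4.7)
The natural decomposition is to split $k^{\phi}_{i,n}$ according to which of the two factors varies:
$$k^{\phi}_{i,n}(x,y) = \underbrace{[k(x,y)-k(x,c_{y,n})]\,\phi_i(x-y)}_{=: I_1(x,y)} + \underbrace{k(x,c_{y,n})\,[\phi_i(x-y)-\phi_i(x-c_{y,n})]}_{=: I_2(x,y)}.$$
The two $\delta_q^q$-terms in the target should come from $\int|I_1|^q\,dx$, which uses the $L_q$-integral regularity of $k$; the term $2^{q(i-n)}$ should come from $\int|I_2|^q\,dx$, which uses only the size condition and the smoothness of $\phi$. So the plan is to treat these two pieces separately and add.

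For $I_1$ the key is to choose the parameters in the definition \eqref{5} of $\delta_q(m)$ so that the support in $x$ of $\phi_i(\cdot-y)$, namely $\{x:\sqrt d\cdot 2^{-i-1}\le |x-y|\le \sqrt d\cdot 2^{-i+1}\}$, is the disjoint union of exactly two consecutive annuli from that definition. I would take $R=\sqrt d\cdot 2^{-n-1}$ and the perturbation $v=c_{y,n}-y$; since $|v|\le \sqrt d\cdot 2^{-n-1}=R$, the definition applies, and with this $R$ the annuli $\{2^m R\le |x-y|\le 2^{m+1}R\}$ for $m=n-i$ and $m=n-i+1$ cover precisely $\mathrm{supp}\,\phi_i(\cdot-y)$. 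Since $|k(x,y)-k(x,c_{y,n})|=|k(x,y)-k(x,y+v)|$, applying \eqref{5} on each annulus yields
$$\int_{\R^d}|I_1(x,y)|^q\,dx \;\le\;\sum_{m\in\{n-i,\,n-i+1\}}(2^m R)^{-d(q-1)}\,\delta_q^q(m)\;\lesssim\; 2^{id(q-1)}\bigl(\delta_q^q(n-i)+\delta_q^q(n-i+1)\bigr),$$
uniformly in $y$, which is the desired bound.

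For $I_2$ everything is elementary: by the mean value theorem and the scaling $\phi_i(\cdot)=\phi(2^i\cdot/\sqrt d)$ one has $\|\nabla\phi_i\|_\infty\lesssim 2^i$, so $|\phi_i(x-y)-\phi_i(x-c_{y,n})|\lesssim 2^i|y-c_{y,n}|\lesssim 2^{i-n}$. The support of $I_2(\cdot,y)$ is contained in the union of the two annular shells around $y$ and $c_{y,n}$ of inner/outer radii $\approx 2^{-i}$, which has Lebesgue measure $\lesssim 2^{-id}$; moreover the hypothesis $i<n-1$ forces $|y-c_{y,n}|\ll 2^{-i}$, so on this support $|x-c_{y,n}|\approx 2^{-i}$, and \eqref{1} gives $|k(x,c_{y,n})|\lesssim 2^{id}$. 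Combining,
$$\int_{\R^d}|I_2(x,y)|^q\,dx\;\lesssim\;2^{-id}\bigl(2^{id}\cdot 2^{i-n}\bigr)^q\;=\;2^{id(q-1)}\cdot 2^{q(i-n)}.$$

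Adding the two estimates and using the elementary inequality $|a+b|^q\lesssim |a|^q+|b|^q$ finishes the proof. The only nontrivial step is the first: the right choice of $R$ and $v$ so that the support of $\phi_i(\cdot-y)$ is \emph{exactly} the union of the two annuli indexed by $m=n-i$ and $m=n-i+1$; every other ingredient is a direct size estimate.
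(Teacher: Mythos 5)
Your proof is correct and follows essentially the same route as the paper: split $k^{\phi}_{i,n}$ into the piece where $k$ varies (handled by two applications of the $\delta_q$ condition with the choice $R=\sqrt d\,2^{-n-1}$, $v=c_{y,n}-y$, $m\in\{n-i,n-i+1\}$) and the piece where $\phi_i$ varies (handled by the mean value theorem, the support/measure bound, and the size condition \eqref{1}), then add. The paper's Lemma \ref{HLX6} uses exactly this decomposition and the same parameter choices, so there is nothing substantively different to report.
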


\begin{proof}
To see this, note that
\begin{align*}
\begin{split}
\int_{\R^{d}}|k^{\phi}_{i,n}(x,y)|^{q}dx&\lesssim
\int_{\R^{d}}|k(x,y)-k(x,c_{y,n})|^{q}|\phi_i(x-y)|^{q}dx\\
&\quad+\int_{\R^{d}}|k(x,c_{y,n})|^{q}|\phi_i(x-y)-\phi_i(x-c_{y,n})|^{q}dx.
\end{split}
\end{align*}
We first claim
\begin{align}\label{cz12345}
\begin{split}\int_{\R^{d}}|k(x,c_{y,n})|^{q}&|\phi_i(x-y)-\phi_i(x-c_{y,n})|^{q}dx\\
& \lesssim\int_{|x-c_{y,n}|\approx 2^{-i}}|k(x,c_{y,n})|^{q}2^{iq}|y-c_{y,n}|^{q}dx.
\end{split}
\end{align}
Indeed, fixing $y\in\mathbb R^d$, it suffices to consider, in the integral above of  \eqref{cz12345}, those $x$ such that $\phi_i(x-y)-\phi_i(x-c_{y,n})\neq0$. By the support of $\phi_{i}$, at least one of the following conditions hold
\begin{enumerate}
\item $2^{-i-1}\sqrt{d}\leq|x-y|\leq2^{-i+1}\sqrt{d}$;
\item $2^{-i-1}\sqrt{d}\leq|x-c_{y,n}|\leq2^{-i+1}\sqrt{d}.$
\end{enumerate}
If (2) holds, then we get $|x-c_{y,n}|\approx 2^{-i}$. If (1) holds, then by the definition of $c_{y,n}$, we see that $|y-c_{y,n}|\leq2^{-n-1}\sqrt{d}$. Moreover, since $i<n-1$,
$$|y-c_{y,n}|\leq2^{-i-2}\sqrt{d}<\frac{1}{2}|x-y|.$$
Thus we deduce that
$$|x-c_{y,n}|\leq|x-y|+|y-c_{y,n}|<\frac{3}{2}|x-y|<2^{-i}\cdot3\sqrt{d}.$$
On the other hand,
$$|x-c_{y,n}|\geq|x-y|-|y-c_{y,n}|>\frac{1}{2}|x-y|>2^{-i-2}\sqrt{d}.$$
Therefore, in this case we still have $|x-c_{y,n}|\approx 2^{-i}$, where the constant only depends on the dimension $d$.
Now we use the relation obtained above and the mean value theorem to get (\ref{cz12345}). This is precisely the claim.

Finally, by the smoothness condition (\ref{5}) and size condition (\ref{1}) of the kernel $k$, the support of $\phi_{i}$ and (\ref{cz12345}), we find
\begin{align*}
\begin{split}
\int_{\R^{d}}|k^{\phi}_{i,n}(x,y)|^{q}dx&\lesssim\int_{2^{-i+n}2^{-n-1}\sqrt d\leq|x-y|\leq2^{-i+n+1}2^{-n-1}\sqrt d}|k(x,y)-k(x,c_{y,n})|^{q}dx\\
&\quad+\int_{2^{-i+n+1}2^{-n-1}\sqrt d\leq|x-y|\leq2^{-i+n+2}2^{-n-1}\sqrt d}|k(x,y)-k(x,c_{y,n})|^{q}dx\\
&\quad+\int_{|x-c_{y,n}|\approx 2^{-i}}|k(x,c_{y,n})|^{q}2^{iq}|y-c_{y,n}|^{q}dx\\
&\lesssim2^{id(q-1)}\big(\delta^{q}_{q}(n-i)+
\delta^{q}_{q}(n-i+1)+2^{q(i-n)}\big),
\end{split}
\end{align*}
since $|y-c_{y,n}|\leq2^{-n-1}\sqrt{d}$. This gives the desired estimate.
\end{proof}

Now we are at a position to show estimate \eqref{bd}.
\begin{proof}[Proof of estimate \eqref{bd}.]
Since $m_{\lambda}(f)=0$, we have $p_n=0$ for all $n\leq 0$. By conclusion (iii) in Theorem \ref{czdecom},  $b_{d}=\sum_{n=1}^{\infty}b_{d,n}$ where $b_{d,n}=p_n(f-f_n)p_n$ as in (\ref{czd6}).

We claim that $\zeta T_{\phi,i}b_{d,n}\zeta=0$ unless $i<n-1$. Fix one $x\in\mathbb R^d$. Recalling that $s=4[\sqrt d]$ in Remark \ref{comp1} and by the second cancellation property of $b_{d,n}$-conclusion (iii) in Theorem \ref{czdecom}, one has
\begin{align}\label{bad34789}
\zeta(x)T_{\phi,i}b_{d,n}(x)\zeta(x)=\int_{\R^{d}}k^{\phi}_{i}(x,y)\chi_{y\notin (8[\sqrt d]+1)Q_{x,n}}\zeta(x)b_{d,n}(y)\zeta(x)dy.
\end{align}
Indeed, recalling the definition of $k^{\phi}_{i}$, it suffices to consider those $y$ in the integral above such that $\phi_i(x-y)\neq0$ and ${y\notin (8[\sqrt d]+1)Q_{x,n}}$.
Note that the support of $\phi_{i}$ implies $2^{-i-1}\sqrt{d}\leq|x-y|\leq2^{-i+1}\sqrt{d}$; while ${y\notin (8[\sqrt d]+1)Q_{x,n}}$ implies that $|x-y|>4\sqrt{d}\cdot2^{-n}$. Thus $\zeta(x)T_{\phi,i}b_{d,n}(x)\zeta(x)$ may be equal to 0 unless $2^{-i+1}\sqrt{d}>4\sqrt{d}\cdot2^{-n}$, that is, $i<n-1$.
This is precisely the claim.

Taking these observations into consideration, we deduce that for any $x\in\mathbb R^d$,
\begin{align*}
  \zeta(x)T^{\phi}_{j}b_{d}(x)\zeta(x)
  =\zeta(x)\sum_{n=1}^{\infty}\sum_{i:i< j;i<n-1}\int_{\R^{d}}k^{\phi}_{i}(x,y)b_{d,n}(y)dy\zeta(x).
\end{align*}
Furthermore, by applying the first cancellation property of $b_{d,n}$-conclusion (iii) stated in Theorem \ref{czdecom}, we get
\begin{eqnarray*}
\zeta(x)T^{\phi}_{j}b_{d}(x)\zeta(x) =\zeta(x) \sum_{n=1}^{\infty}\sum_{i:i< j;i<n-1}\Big( \int_{\R^{d}} k^{\phi}_{i,n}(x,y)
b_{d,n}(y) \, dy \Big) \zeta(x),
\end{eqnarray*}
where $k^{\phi}_{i,n}(x,y)$ was given in (\ref{4789}). Note that
$$\int_{\R^{d}} k^{\phi}_{i,n}(x,y)
b_{d,n}(y) \, dy$$
is a selfadjoint operator. Consequently,
\begin{align*}
\zeta(x)T^{\phi}_{j}b_{d}(x)\zeta(x) & \leq\zeta(x) \sum_{n=1}^{\infty}\sum_{i:i< j;i<n-1} \Big| \int_{\R^{d}} k^{\phi}_{i,n}(x,y)
b_{d,n}(y) \, dy \Big|\zeta(x)\\
       & \leq\zeta(x) \sum_{n=1}^{\infty}\sum_{i:i<n-1} \Big| \int_{\R^{d}} k^{\phi}_{i,n}(x,y)
b_{d,n}(y) \, dy \Big|\zeta(x)\\
       &\triangleq \zeta(x)F_{1}(x)\zeta(x).
\end{align*}
Thus for any $j\in\Z$
\begin{align}\label{F1}
-\zeta(x)F_{1}(x)\zeta(x)\leq \zeta(x)T^{\phi}_{j}b_{d}(x)\zeta(x)\leq \zeta(x)F_{1}(x)\zeta(x).
\end{align}
In the following, we show that
\begin{align}\label{F121}
\|F_{1}\|_{1}\lesssim\|f\|_{1}.
\end{align}
To see this, by using the Fubini theorem and the Minkowski inequality, we arrive at
\begin{align*}
  \|F_{1}\|_{1}
  & =\tau \int_{\R^{d}}\sum_{n=1}^{\infty}\sum_{i:i<n-1}\Big|\int_{\R^{d}} k^{\phi}_{i,n}(x,y)
b_{d,n}(y)dy \Big|dx\\
& = \sum_{n=1}^{\infty}\sum_{i:i<n-1}\int_{\R^{d}}\Big\|\int_{\R^{d}} k^{\phi}_{i,n}(x,y)
b_{d,n}(y)dy\Big\|_{L_{1}(\M)}dx\\
& \leq \sum_{n=1}^{\infty}\sum_{i:i<n-1}\int_{\R^{d}}\int_{\R^{d}} |k^{\phi}_{i,n}(x,y)|
\|b_{d,n}(y)\|_{L_{1}(\M)}dydx.
\end{align*}
Now exploiting (\ref{bad34}) in Lemma \ref{HLX6} and the Fubini theorem, we find
\begin{align*}
  \|F_{1}\|_{1}  & \leq\sum_{n=1}^{\infty}\sum_{i:i<n-1}\int_{\R^{d}}\|b_{d,n}(y)\|_{L_{1}(\M)}\int_{\R^{d}} |k^{\phi}_{i,n}(x,y)|
dxdy\\
  & \lesssim\sum_{n=1}^{\infty}
  \sum_{i:i<n-1}[\delta_{1}(n-i)+\delta_{1}(n-i+1)+2^{-n+i}]\int_{\R^{d}}\|b_{d,n}(y)\|_{L_{1}(\M)}dy\\
& \leq\sum_{n=1}^{\infty}\sum_{m=1}^{\infty}[2\delta_{1}(m)+2^{-m}]\int_{\R^{d}}\|b_{d,n}(y)\|_{L_{1}(\M)}dy\\
& \lesssim\sum_{n=1}^{\infty}\sum_{m=1}^{\infty}[2\delta_{2}(m)+2^{-m}]\|b_{d,n}\|_{L_{1}(\mathcal{N})},
\end{align*}
where in the last inequality we used the H\"{o}lder inequality.
Now we use (\ref{6}) with $q=2$ and conclusion (iii) announced in Theorem \ref{czdecom} to get
$$\|F_{1}\|_{1}\lesssim\sum_{n=1}^{\infty}\|b_{d,n}\|_{1}\lesssim\|f\|_{1},$$
which establishes (\ref{F121}).
As a consequence, set $e_{2,2}=\chi_{(0,\lambda]}(\zeta F_{1}\zeta)$. Then estimate \eqref{F1} implies for any $j\in\Z$
\begin{align*}
-\lambda\leq-e_{2,2}\zeta F_{1}\zeta e_{2,2}\leq e_{2,2}\zeta T^{\phi}_{j}b_{d}\zeta e_{2,2}\leq e_{2,2}\zeta F_{1}\zeta e_{2,2}\leq\lambda;
\end{align*}
moreover, by the Chebyshev inequality and (\ref{F121}), we find
$$ \varphi(e_{2,2}^{\perp})\leq\frac{\|\zeta F_{1}\zeta\|_{1}}{\lambda}\leq\frac{\| F_{1}\|_{1}}{\lambda}\lesssim\frac{\| f\|_{1}}{\lambda}.$$
This gives the desired estimate (\ref{bd}) thanks to Remark \ref{weakm4}.
\end{proof}

\subsection{Estimate for the off-diagonal part of the bad function $b_\mathit{off}$: \eqref{HLX9}}\quad

\medskip

Let us now turn to the off-diagonal term $b_\mathit{off}$.
Using the same argument as for $T^{\phi}_{j}b_{d}$ and taking $e_{3,1}=\zeta$, we are reduced to finding a  projection $e_{3,2}\in\mathcal{N}$ such that
\begin{align}\label{bd11}
\sup_{j\in\mathbb Z}\|e_{3,2}\zeta T^{\phi}_{j}b_\mathit{off}\zeta e_{3,2}\|_{\infty}\leq\lambda\ \ \mbox{and}\ \ \varphi(e_{3,2}^{\perp})\lesssim\frac{\| f\|_{1}}{\lambda}.
\end{align}
\begin{lem}\label{HLX13}\rm
For $i<n-1$, the following estimates hold for any $Q\in\mathcal Q_n$,
\begin{equation}\label{cz1009}
\begin{split}
\int_{\R^{d}}\Big\|\int_{Q}&|k^{\phi}_{i,n}(x,y)
|^{2}p_{Q} f(y) p_{Q} dy\Big\|^{\frac{1}{2}}_{L_{\frac{1}{2}}(\M)}dx\\
&\lesssim\Big((\delta^{2}_{2}(n-i)+\delta^{2}_{2}(n-i+1)+2^{2(i-n)})\tau(p_{Q})
\varphi(fp_{Q} \chi_{Q})\Big)^{\frac{1}{2}},
\end{split}
\end{equation}
\begin{equation}\label{cz10092}
\begin{split}
\int_{\R^{d}}\Big\|\int_{Q}&|k^{\phi}_{i,n}(x,y)
|^{2}p_{Q} f_Q p_{Q} dy\Big\|^{\frac{1}{2}}_{L_{\frac{1}{2}}(\M)}dx\\
&\lesssim\Big((\delta^{2}_{2}(n-i)+\delta^{2}_{2}(n-i+1)+2^{2(i-n)})\tau(p_{Q})
\varphi(f_Qp_{Q} \chi_{Q})\Big)^{\frac{1}{2}}.
\end{split}
\end{equation}
\end{lem}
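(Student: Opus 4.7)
The plan is to establish both \eqref{cz1009} and \eqref{cz10092} by a unified tracial Cauchy--Schwarz argument, combined with Lemma \ref{HLX6} and the observation that, for $y \in Q \in \Q_n$, the map $x \mapsto k^{\phi}_{i,n}(x,y)$ has very small support.

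First, fix $x$ and denote the inner integral by $A(x)$. Since $|k^{\phi}_{i,n}(x,y)|^{2}$ is a nonnegative scalar while $f(y)$ (resp.\ $f_Q$) is positive, $A(x)$ is a positive operator of the form $p_Q B(x) p_Q$ with $B(x) \ge 0$, so $\|A(x)\|_{L_{1/2}(\M)}^{1/2} = \tau(A(x)^{1/2})$ and the support of $A(x)^{1/2}$ lies under $p_Q$. The tracial Cauchy--Schwarz inequality then gives
$$\tau(A(x)^{1/2}) = \tau\bigl(A(x)^{1/2} p_Q\bigr) \le \tau(A(x))^{1/2}\,\tau(p_Q)^{1/2},$$
and by cyclicity ($\tau(p_Q X p_Q) = \tau(X p_Q)$) this yields the pointwise bound
$$\tau(A(x)^{1/2}) \le \tau(p_Q)^{1/2}\,\Big(\int_Q |k^{\phi}_{i,n}(x,y)|^{2}\,\tau(f(y) p_Q)\,dy\Big)^{1/2}$$
for \eqref{cz1009}, and the analogous bound with $\tau(f(y) p_Q)$ replaced by $\tau(f_Q p_Q)$ for \eqref{cz10092}.

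Next, I would exploit the support of $k^{\phi}_{i,n}(\cdot, y)$. Since $\operatorname{supp} \phi_i \subset \{|\cdot| \approx 2^{-i}\}$ and $|y - c_{y,n}| \le 2^{-n-1}\sqrt d$, the assumption $i < n-1$ (exactly as in the proof of Lemma \ref{HLX6}) forces $k^{\phi}_{i,n}(\cdot, y) = 0$ outside $\{x : |x - y| \lesssim 2^{-i}\}$. Because $\ell(Q) = 2^{-n} < 2^{-i}$, the union of these balls over $y \in Q$ lies in a set $E_Q \subset \R^d$ with $|E_Q| \lesssim 2^{-id}$. Applying the scalar Cauchy--Schwarz inequality on $E_Q$, then Fubini and Lemma \ref{HLX6} with $q = 2$, I obtain
\begin{align*}
\int_{\R^d}\!\tau(A(x)^{1/2})\,dx
&\le \tau(p_Q)^{1/2}\,|E_Q|^{1/2}\Big(\int_Q \tau(f(y) p_Q)\int_{\R^d}|k^{\phi}_{i,n}(x,y)|^{2}\,dx\,dy\Big)^{1/2} \\
&\lesssim \tau(p_Q)^{1/2}\,2^{-id/2}\bigl(2^{id}(\delta_2^2(n{-}i)+\delta_2^2(n{-}i{+}1)+2^{2(i-n)})\bigr)^{1/2}\varphi(f p_Q \chi_Q)^{1/2},
\end{align*}
using $\int_Q \tau(f(y) p_Q)\,dy = \varphi(f p_Q \chi_Q)$. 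The factors $2^{\pm id/2}$ cancel, yielding \eqref{cz1009}; the proof of \eqref{cz10092} is identical after noting $\int_Q \tau(f_Q p_Q)\,dy = |Q|\tau(f_Q p_Q) = \varphi(f_Q p_Q \chi_Q)$.

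The delicate point is identifying the correct order of the two Cauchy--Schwarz applications: one must first use the tracial version against the sandwiching projection $p_Q$ in order to extract the factor $\tau(p_Q)^{1/2}$, and only then invoke the geometric support restriction on $k^{\phi}_{i,n}(\cdot, y)$ so that the $2^{-id/2}$ gain from $|E_Q|$ exactly absorbs the $2^{id/2}$ growth coming from the $L^2$-kernel estimate of Lemma \ref{HLX6}. Reversing this order, or estimating the kernel in $L^1_x$ rather than $L^2_x$, introduces spurious factors of $\tau(p_Q)^{1/2}$ or $2^{id/2}$ incompatible with the stated inequalities.
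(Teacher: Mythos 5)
Your proposal is correct and follows essentially the same route as the paper: you identify the support restriction $\operatorname{dist}(x,Q)\thickapprox 2^{-i}$ (the paper's claim at the start of its proof), apply the tracial Cauchy--Schwarz/H\"older against $p_Q$ to pull out $\tau(p_Q)^{1/2}$, then scalar Cauchy--Schwarz over the $x$-support together with Fubini and Lemma \ref{HLX6} with $q=2$. The paper phrases the first step as an $L_{1/2}$--$L_1$--$L_1$ H\"older inequality with the factor $\|p_Q\chi_{\operatorname{dist}(x,Q)\thickapprox 2^{-i}}\|^{1/2}_{L_1(\M)}$, but this is identical to your $\tau(A(x)^{1/2})\le \chi_{E_Q}(x)\,\tau(p_Q)^{1/2}\tau(A(x))^{1/2}$.
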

\begin{proof}
Fix $i<n-1$ and $Q\in\mathcal Q_n$. We first consider \eqref{cz1009}. Let $x\in\R^{d}$. We claim that
$$\int_{Q}|k^{\phi}_{i,n}(x,y)
|^{2}p_{Q} f(y) p_{Q} dy=\int_{Q}|k^{\phi}_{i,n}(x,y)
|^{2}p_{Q} f(y) p_{Q} dy\chi_{dist(x,Q)\thickapprox2^{-i}}.$$
Indeed, the left integral may not be 0 only if there exists $y_0\in Q$ such that $k^\phi_{i,n}(x,y_0)\neq0$. By the definition of $k^\phi_{i,n}$, the support of $\phi_{i}$ implies that at least one of the following cases holds
\begin{enumerate}
\item $2^{-i-1}\sqrt{d}\leq|x-y_0|\leq2^{-i+1}\sqrt{d}$;
\item $2^{-i-1}\sqrt{d}\leq|x-c_{y_0,n}|\leq2^{-i+1}\sqrt{d}$.
\end{enumerate}
If (1) holds, then we use $i<n-1$ to get
$$dist(x,Q)\geq|x-y_0|-\sqrt{d}\ell(Q)\geq2^{-i-1}\sqrt{d}-2^{-i-2}\sqrt{d}=2^{-i-2}\sqrt{d}.$$
On the other hand, it is easy to see
$$dist(x,Q)\leq|x-y_0|\leq2^{-i+1}\sqrt{d}.$$
Thus we obtain $dist(x,Q)\thickapprox2^{-i}$. The same reason applies to case (2).
This is precisely the claim.

Now we apply the H\"{o}lder and Cauchy-Schwarz inequalities to get
\begin{align*}
&\ \int_{\R^{d}}\Big\|\int_{Q}|k^{\phi}_{i,n}(x,y)|^{2}p_{Q} f(y) p_{Q} dy\Big\|^{\frac{1}{2}}_{L_{\frac{1}{2}}(\M)}dx\\ &
\leq \int_{\R^{d}}\big\|p_{Q}\chi_{dist(x,Q)\thickapprox2^{-i}}\big\|^{\frac{1}{2}}_{L_{1}(\M)}
\Big\|\int_{Q}|k^{\phi}_{i,n}(x,y)|^{2}p_{Q} f(y) p_{Q} dy\Big\|^{\frac{1}{2}}_{L_{1}(\M)}dx \\&
\leq\Big(\int_{\R^{d}}\big\|p_{Q}\chi_{dist(x,Q)\thickapprox2^{-i}}\big\|_{L_{1}(\M)}dx\Big)
^{\frac{1}{2}}\Big(\int_{\R^{d}}\Big\|\int_{Q}|k^{\phi}_{i,n}(x,y)|^{2}p_{Q} f(y) p_{Q} dy\Big\|_{L_{1}(\M)}
dx\Big)^{\frac{1}{2}}.
\end{align*}
Since $i<n-1$ and $\ell(Q)=2^{-n}$, it is easy to verify that
\begin{align}\label{cz210}
\Big(\int_{\R^{d}}\big\|p_{Q}\chi_{dist(x,Q)\thickapprox2^{-i}}\big\|_{L_{1}(\M)}dx\Big)
^{\frac{1}{2}}\lesssim2^{-\frac{id}{2}}\big(\tau (p_{Q})\big)^{\frac{1}{2}}.
\end{align}
We then consider another term.  The Fubini theorem implies
$$\int_{\R^{d}}\Big\|\int_{Q}|k^{\phi}_{i,n}(x,y)|^{2}p_{Q} f(y) p_{Q} dy\Big\|_{L_{1}(\M)}
dx=\tau\int_{Q}\Big(\int_{\R^{d}}|k^{\phi}_{i,n}(x,y)|^{2}dx\Big)p_{Q} f(y) p_{Q} dy.$$
By Lemma \ref{HLX6}, this gives rise to
\begin{align}\label{cz214}
\begin{split}
&\quad\int_{\R^{d}}\Big\|\int_{Q}|k^{\phi}_{i,n}(x,y)|^{2}p_{Q} f(y) p_{Q} dy\Big\|_{L_{1}(\M)}
dx\\
 & \lesssim2^{id}(\delta^{2}_{2}(n-i)+\delta^{2}_{2}(n-i+1)+2^{2(i-n)})\varphi(fp_{Q} \chi_{Q}).
\end{split}
\end{align}
Finally, exploiting (\ref{cz210}) and (\ref{cz214}), we find
\begin{align*}
&\quad\int_{\R^{d}}\Big\|\int_{Q}|k^{\phi}_{i,n}(x,y)|^{2}p_{Q} f(y) p_{Q} dy\Big\|^{\frac{1}{2}}_{L_{\frac{1}{2}}(\M)}dx\\
 & \lesssim\big(\tau (p_{Q})\big)^{\frac{1}{2}}\Big((\delta^{2}_{2}(n-i)+\delta^{2}_{2}(n-i+1)+2^{2(i-n)})
\varphi(fp_{Q} \chi_{Q})\Big)^{\frac{1}{2}}\\& =\Big((\delta^{2}_{2}(n-i)+\delta^{2}_{2}(n-i+1)+2^{2(i-n)})\tau(p_{Q})
\varphi(fp_{Q} \chi_{Q})\Big)^{\frac{1}{2}}.
\end{align*}
This is the desired estimate (\ref{cz1009}). It is easy to see that the same argument works for estimate  \eqref{cz10092}.
\end{proof}

Now we are ready to show estimate \eqref{bd11}.
\begin{proof}[Proof of estimate \eqref{bd11}.]
By the assumption $m_{\lambda}(f)=0$, conclusion (iv) in Theorem \ref{czdecom} yields
$b_\mathit{off}=\sum_{n=1}^{\infty}b_{n}$, where $b_{n}=p_n(f-f_n)q_n+q_n(f-f_n)p_n$ as in (\ref{czd7}).
On the other hand, applying the similar argument in dealing with the diagonal term $b_{d}$ (see \eqref{bad34789}), we can deduce that $\zeta T_{\phi,i}b_{n}\zeta=0$ unless $i<n-1$.
Consequently, for $x\in\mathbb R^d$,
\begin{align*}
  \zeta(x)T^{\phi}_{j}b_{\mathit{off}}(x)\zeta(x)
  & =\zeta(x)\sum_{n=1}^{\infty}\sum_{i:i< j;i<n-1}\int_{\R^{d}}k^{\phi}_{i}(x,y)b_{n}(y)dy\zeta(x).
\end{align*}
Using the cancellation property of $b_{n}$ announced in Theorem \ref{czdecom} and the fact that $b_{n}$ is selfadjoint, we obtain
\begin{align*}
 \zeta(x)T^{\phi}_{j}b_{\mathit{off}}(x)\zeta(x)
  & =\zeta(x) \sum_{n=1}^{\infty}\sum_{i:i< j;i<n-1}\sum_{Q\in\Q_{n}}\int_{Q} k^{\phi}_{i,n}(x,y)
b_{n}(y) \, dy \zeta(x)\\
  & \leq\zeta(x) \sum_{n=1}^{\infty}\sum_{i:i<j;i<n-1}\sum_{Q\in\Q_{n}}\Big|\int_{Q} k^{\phi}_{i,n}(x,y)
b_{n}(y) \, dy \Big|\zeta(x)\\
&\leq\zeta(x) \sum_{n=1}^{\infty}\sum_{i:i<n-1}\sum_{Q\in\Q_{n}}\Big|\int_{Q} k^{\phi}_{i,n}(x,y)
b_{n}(y) \, dy \Big|\zeta(x)\\
&\triangleq \zeta(x)F_{2}(x)\zeta(x).
\end{align*}
Thus for all $j\in\Z$
\begin{align}\label{F451}
-\zeta(x)F_{2}(x)\zeta(x)\leq \zeta(x)T^{\phi}_{j}b_{\mathit{off}}(x)\zeta(x)\leq \zeta(x)F_{2}(x)\zeta(x).
\end{align}
Apparently, the proof of estimate (\ref{bd11}) would be completed if the following estimate were verified:
\begin{align}\label{F12111}
\|F_{2}\|_{1}\lesssim\|f\|_{1}.
\end{align}
Indeed,  set $e_{3,2}=\chi_{(0,\lambda]}(\zeta F_{2}\zeta)$. Then for any $j\in\Z$, estimates (\ref{F451}) and (\ref{F12111}) give
$$\|e_{3,2}\zeta T^{\phi}_{j}b_\mathit{off}\zeta e_{3,2}\|_{\infty}\leq\lambda\ \ \mbox{and}\ \
\varphi(e_{3,2}^{\perp})\leq\frac{\|F_{2}\|_{1}}{\lambda}\lesssim\frac{\| f\|_{1}}{\lambda},$$
which is the desired estimate (\ref{bd11}).

It remains to show (\ref{F12111}). By using the Fubini theorem, we clearly have
\begin{align*}
  \|F_{2}\|_{1}
  & =\tau \int_{\R^{d}}\sum_{n=1}^{\infty}\sum_{i:i<n-1}\sum_{Q\in\Q_{n}}\Big|\int_{Q} k^{\phi}_{i,n}(x,y)
b_{n}(y)dy \Big|dx\\
& = \sum_{n=1}^{\infty}\sum_{i:i<n-1}\sum_{Q\in\Q_{n}}\int_{\R^{d}}\Big\|\int_{Q} k^{\phi}_{i,n}(x,y)
b_{n}(y)dy\Big\|_{L_{1}(\M)}dx.
\end{align*}
The Minkowski inequality and the definition of $b_{n}$ imply that $\|F_{2}\|_{1}$ can be controlled by the sum of the following four terms
\begin{eqnarray*}
\|F_{2,1}\|_{1}& \triangleq &\sum_{n=1}^{\infty}\sum_{i:i<n-1}\sum_{Q\in\Q_{n}}\int_{\R^{d}}\Big\|\int_{Q} k^{\phi}_{i,n}(x,y)
p_{Q}f(y)q_{Q}dy\Big\|_{L_{1}(\M)}dx,\\
\|F_{2,2}\|_{1}& \triangleq &\sum_{n=1}^{\infty}\sum_{i:i<n-1}\sum_{Q\in\Q_{n}}\int_{\R^{d}}\Big\|\int_{Q} k^{\phi}_{i,n}(x,y)
q_{Q}f(y)p_{Q}dy\Big\|_{L_{1}(\M)}dx, \\
\|F_{2,3}\|_{1}& \triangleq &\sum_{n=1}^{\infty}\sum_{i:i<n-1}\sum_{Q\in\Q_{n}}\int_{\R^{d}}\Big\|\int_{Q} k^{\phi}_{i,n}(x,y)
p_{Q}f_{n}(y)q_{Q}dy\Big\|_{L_{1}(\M)}dx,\\
\|F_{2,4}\|_{1}& \triangleq &\sum_{n=1}^{\infty}\sum_{i:i<n-1}\sum_{Q\in\Q_{n}}\int_{\R^{d}}\Big\|\int_{Q} k^{\phi}_{i,n}(x,y)
q_{Q}f_{n}(y)p_{Q}dy\Big\|_{L_{1}(\M)}dx.
\end{eqnarray*}
Therefore, this leads to show
$$\max\Big\{\|F_{2,1}\|_{1},\|F_{2,2}\|_{1},\|F_{2,3}\|_{1},\|F_{2,4}\|_{1}\Big\}\lesssim\|f\|_{1}.$$

Consider $\|F_{2,1}\|_{1}$ firstly.  For convenience, we set
$$G_{i,n,Q}(x)\triangleq\int_{Q} k^{\phi}_{i,n}(x,y)
p_{Q}f(y)q_{Q}dy.$$
We now treat with $\|G_{i,n,Q}(x)\|_{L_{1}(\M)}$. 
We use Lemma \ref{mainlemma} to find that
\begin{align}\label{cz21412322}
\begin{split}
&\|G_{i,n,Q}(x)\|_{L_{1}(\M)}\\
&\leq\Big\|\Big(\int_{Q}|k^{\phi}_{i,n}(x,y)|^{2}p_{Q} f(y) p_{Q} dy\Big)^{\frac{1}{2}}\Big\|_{L_{1}(\M)}\Big\|\Big(\int_{Q}q_{Q} f(y) q_{Q} dy\Big)^{\frac{1}{2}}\Big\|_{L_{\infty}(\M)}\\
& = \Big\|\int_{Q}|k^{\phi}_{i,n}(x,y)|^{2}p_{Q} f(y) p_{Q} dy\Big\|^{\frac{1}{2}}_{L_{\frac{1}{2}}(\M)}\Big\|\Big(|Q|q_{Q} f_{Q} q_{Q} \Big)^{\frac{1}{2}}\Big\|_{L_{\infty}(\M)}.
\end{split}
\end{align}
Now applying the properties (\ref{czd5}) to the second term above, we get
\begin{align*}
\|G_{i,n,Q}(x)\|_{L_{1}(\M)}
\leq\Big\|\int_{Q}|k^{\phi}_{i,n}(x,y)|^{2}p_{Q} f(y) p_{Q} dy\Big\|^{\frac{1}{2}}_{L_{\frac{1}{2}}(\M)}(|Q|\lambda)^{\frac{1}{2}}.
\end{align*}
This provides us with the estimate
$$\|F_{2,1}\|_{1}\leq\sum_{n=1}^{\infty}\sum_{i:i<n-1}\sum_{
Q\in\Q_{n}}\int_{\R^{d}}\Big\|\int_{Q}|k^{\phi}_{i,n}(x,y)|^{2}p_{Q} f(y) p_{Q} dy\Big\|^{\frac{1}{2}}_{L_{\frac{1}{2}}(\M)}(|Q|\lambda)^{\frac{1}{2}}dx.$$
Now we apply (\ref{cz1009}) stated in  Lemma \ref{HLX13}, (\ref{6}) with $q=2$ and the Fubini theorem to deduce that
\begin{align*}
  \|F_{2,1}\|_{1}
& \lesssim\sum_{n=1}^{\infty}\sum_{i:i<n-1}\sum_{
Q\in\Q_{n}}(\delta_{2}(n-i)+\delta_{2}(n-i+1)+2^{i-n})\Big(\tau(p_{Q})
\varphi(fp_{Q} \chi_{Q})|Q|\lambda\Big)^{\frac{1}{2}}\\
& \leq\sum_{n=1}^{\infty}\sum_{
Q\in\Q_{n}}\sum_{m=1}^{\infty}(2\delta_{2}(m)+2^{-m})\Big(\tau(p_{Q})
\varphi(fp_{Q} \chi_{Q})|Q|\lambda\Big)^{\frac{1}{2}}\\
& \lesssim\sum_{n=1}^{\infty}\sum_{
Q\in\Q_{n}}\Big(\tau(p_{Q})
\varphi(fp_{Q} \chi_{Q})|Q|\lambda\Big)^{\frac{1}{2}}.
\end{align*}
Furthermore, the Cauchy-Schwarz inequality implies
\begin{align*}
  \sum_{
Q\in\Q_{n}}\Big(\tau(p_{Q})
\varphi(fp_{Q} \chi_{Q})|Q|\lambda\Big)^{\frac{1}{2}}
  & \leq\Big(\sum_{
Q\in\Q_{n}}\tau(p_{Q})
|Q|\lambda\Big)^{\frac{1}{2}}\Big(\sum_{
Q\in\Q_{n}}\varphi(fp_{Q} \chi_{Q})\Big)^{\frac{1}{2}}\\
& =\Big(\lambda\varphi(p_{n})\Big)^{\frac{1}{2}}
  \Big(\varphi(fp_{n})\Big)^{\frac{1}{2}}.
\end{align*}
Then applying the Cauchy-Schwarz inequality once more, we finally get
\begin{align*}
  \| F_{2,1}\|_{1}
  & \lesssim\sum_{n=1}^{\infty}\Big(\lambda\varphi(p_{n})\Big)
^{\frac{1}{2}}\Big(\varphi(fp_{n})\Big)^{\frac{1}{2}}\\
& \leq \Big(\sum_{n=1}^{\infty}\lambda\varphi(p_{n})\Big)^{\frac{1}{2}}
  \Big(\sum_{n=1}^{\infty}\varphi(fp_{n})\Big)^{\frac{1}{2}}\\
& =  \Big(\lambda\varphi(1-q)\Big)^{\frac{1}{2}}
  \Big(\varphi(f(1-q))\Big)^{\frac{1}{2}}\lesssim\|f\|_{1}.
\end{align*}
This finishes the estimate of $\| F_{2,1}\|_{1}$.

The same arguments work also for other three terms $\| F_{2,2}\|_{1}$, $\| F_{2,3}\|_{1}$ and $\| F_{2,4}\|_{1}$ by  \eqref{cz1009}, \eqref{cz10092} in Lemma \ref{HLX13} and noting $\varphi(f_{n}p_{n})=\varphi(fp_{n}).$ Thus we finish the proof.
\end{proof}
\begin{rk}\label{clever}\rm
We would like to point out that the estimate of $\|F_{2,1}\|_{1}$, in particular estimate (\ref{cz21412322}), is partially motivated by Cadilhac's note \cite{C2}.
\end{rk}
\begin{rk}\rm
It is worthy to point out that by showing the weak type $(1,1)$ of $(T_{j}^{\phi})_{j\in\Z}$, we see that Theorem \ref{p2} still holds true if we weaken the assumption of the strong type $(p_{0},p_{0})$ of $(T_{\varepsilon})_{\varepsilon>0}$ to the weak type $(p_{0},p_{0})$ of $(T_{\varepsilon})_{\varepsilon>0}$. The details are left to the interested readers.
\end{rk}


\section{Proof of Theorem \ref{t1}: maximal inequalities}

In this section, we prove the maximal inequalities announced in Theorem \ref{t1}. By Theorem \ref{p2}, the weak type $(1,1)$ result is a consequence of the strong type $(p,p)$ of $(T_\varepsilon )_{\varepsilon>0}$.  While to show the strong type $(p,p)$ of $(T_\varepsilon )_{\varepsilon>0}$, as mentioned in the introduction,
it suffices to show the following Cotlar inequality in terms of norm: for all $f\in L_p(\mathcal N)_+$ with $1<p<\infty$,
\begin{align}\label{cotlar norm}
\big\|{\sup_{\varepsilon>0}}^{+}T_{\varepsilon} f\big\|_p\lesssim  \big\|{\sup_{\varepsilon>0}}^{+}M_{\varepsilon} (Tf)\big\|_p+ \big\|{\sup_{\varepsilon>0}}^{+}M_{\varepsilon} f\big\|_p.
\end{align}
Indeed, by Mei's noncommutative Hardy-Littlewood maximal inequalities \cite{M} and the fact that $T$ is bounded on $L_{p}(\mathcal{N})$ under the kernel conditions (\ref{1}), (\ref{232}) and (\ref{con4}) (see e.g. \cite[Theorem A]{JP1}), \eqref{cotlar norm} implies
$$\big\|{\sup_{\varepsilon>0}}^{+}T_{\varepsilon} f\big\|_p\lesssim\|Tf\|_p+ \|f\|_p\lesssim\|f\|_p,\quad\forall\; f\in L_p(\mathcal N).$$


We need the following lemma, which should be well-known to experts, see e.g. \cite[Theorem 4.3]{CXY13}.
\begin{lem}\label{result of cxy}\rm
Let $\psi$ be a non-negative radial function on $\mathbb R^d$ such that $\psi(x)\lesssim\frac{1}{(1+|x|)^{d+\delta}}$ for $x\in\R^d$ with some $\delta>0$.
Let $\psi_\varepsilon(x)=\frac1{\varepsilon^d}\, \psi(\frac x\varepsilon)$ for $x\in\R^d$ and $\varepsilon>0$. Let $1<p\leq\infty$. Then
$$\big\|{\sup_{\varepsilon>0}}^+\psi_\varepsilon\ast f\big\|_p\lesssim\big\|{\sup_{\varepsilon>0}}^+M_\varepsilon f\big\|_p,\;\forall f\in L_p(\mathcal N).$$
\end{lem}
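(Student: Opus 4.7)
The plan is to dominate $\psi$ by its least radially decreasing majorant $\Psi$, expand $\Psi$ via the layer-cake formula as an integral of characteristic functions of balls centered at the origin, and use $\chi_{B(0,r)}\ast g=r^dM_rg$. This converts the convolution into a weighted integral of $M_\rho$--averages, which is the only form the noncommutative Hardy--Littlewood maximal norm knows how to handle. First I would reduce to $f\geq0$: writing $f=f_1-f_2+i(f_3-f_4)$ with $f_k\in L_p(\mathcal N)_+$ and $\|f_k\|_p\lesssim\|f\|_p$, the quasi-triangle inequality in $L_p(\mathcal N;\ell_\infty)$ together with Mei's noncommutative Hardy--Littlewood maximal inequality \cite{M} (which also yields $\|g\|_p\approx\|{\sup_{\rho>0}}^+M_\rho g\|_p$ for $g\in L_p(\mathcal N)_+$ and $1<p\leq\infty$) reduces the claim to the case $f\in L_p(\mathcal N)_+$.

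For positive $f$, let $\Psi(x)=\sup_{|z|\geq|x|}\psi(z)$. This $\Psi$ is radially decreasing and inherits the decay $\Psi(x)\lesssim(1+|x|)^{-(d+\delta)}$, hence lies in $L_1(\R^d)$. Its super-level sets are balls $\{\Psi>s\}=B(0,\rho(s))$, so the layer-cake formula reads $\Psi(y)=\int_0^\infty\chi_{B(0,\rho(s))}(y)\,ds$ with
\[
\int_0^\infty\rho(s)^d\,ds\;=\;c_d^{-1}\|\Psi\|_1\;=:\;C_\psi\;<\;\infty,
\]
where $c_d=|B(0,1)|$. Dilating by $\varepsilon$, applying the scalar identity $\chi_{B(0,r)}\ast g=r^dM_rg$, and making the change of variable $r=\varepsilon\rho(s)$, I obtain the Bochner-integral representation
\[
\Psi_\varepsilon\ast f(x)\;=\;\int_0^\infty\rho(s)^d\,M_{\varepsilon\rho(s)}f(x)\,ds,\qquad\varepsilon>0.
\]

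Now by Remark~\ref{rk:MaxFunct} choose $a\in L_p(\mathcal N)_+$ with $M_\rho f\leq a$ for every $\rho>0$ and $\|a\|_p\leq2\|{\sup_{\rho>0}}^+M_\rho f\|_p$. Integrating the operator inequality $\rho(s)^dM_{\varepsilon\rho(s)}f\leq\rho(s)^d\,a$ in $s$ gives $\Psi_\varepsilon\ast f\leq C_\psi\,a$ uniformly in $\varepsilon$. Since $0\leq\psi\leq\Psi$, the scalar kernel $\Psi_\varepsilon-\psi_\varepsilon$ is nonnegative, and as $f\geq0$ the operator-valued integrand $(\Psi_\varepsilon-\psi_\varepsilon)(y)f(x-y)$ is positive, so $\psi_\varepsilon\ast f\leq\Psi_\varepsilon\ast f\leq C_\psi\,a$ for every $\varepsilon>0$. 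A final application of Remark~\ref{rk:MaxFunct} then yields
\[
\big\|{\sup_{\varepsilon>0}}^+\psi_\varepsilon\ast f\big\|_p\;\leq\;C_\psi\|a\|_p\;\lesssim\;\big\|{\sup_{\rho>0}}^+M_\rho f\big\|_p,
\]
finishing the positive case, and by the reduction in the first paragraph, the lemma. The main (and only) obstacle is that the scalar device $|\psi_\varepsilon\ast f|\leq\psi_\varepsilon\ast|f|\lesssim M^{HL}f$ has no direct noncommutative counterpart; reducing to positive $f$ at the outset and then propagating the pointwise operator-order bound $M_\rho f\leq a$ through the layer-cake expansion of $\Psi$ is precisely the device that bypasses it.
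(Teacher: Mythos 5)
Your proof is correct and takes the standard layer-cake route that the cited reference \cite[Theorem 4.3]{CXY13} also relies on: dominate $\psi$ by its radially decreasing $L_1$-majorant $\Psi$, expand $\Psi_\varepsilon\ast f = \int_0^\infty \rho(s)^d\, M_{\varepsilon\rho(s)}f\,ds$, and push the operator-order bound $M_r f\leq a$ (from Remark~\ref{rk:MaxFunct}) through this integral. The positive-case argument is sound and handles the noncommutative order correctly.

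The one spot that needs tightening is the reduction to $f\geq0$. Your chain is
$\|{\sup_{\varepsilon>0}}^{+}\psi_\varepsilon\ast f\|_p \leq \sum_k \|{\sup_{\varepsilon>0}}^{+}\psi_\varepsilon\ast f_k\|_p \lesssim \sum_k\|{\sup_{\rho>0}}^{+}M_\rho f_k\|_p \lesssim \sum_k\|f_k\|_p\lesssim\|f\|_p$,
and to recover the stated right-hand side one still needs $\|f\|_p\lesssim\|{\sup_{\rho>0}}^{+}M_\rho f\|_p$ for the original, not necessarily positive, $f$; you only assert the equivalence $\|g\|_p\approx\|{\sup_{\rho>0}}^{+}M_\rho g\|_p$ for $g\in L_p(\mathcal N)_+$. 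The needed direction does hold in general: $M_\rho f\to c_d f$ in $L_p(\mathcal N)$ as $\rho\to0$ for $1<p<\infty$ (and in the weak-$*$ sense for $p=\infty$), whence $c_d\|f\|_p\leq \sup_{\rho>0}\|M_\rho f\|_p\leq\|{\sup_{\rho>0}}^{+}M_\rho f\|_p$ — but this should be said explicitly, since nothing about the decomposition gives a direct comparison between $\|{\sup_{\rho>0}}^{+}M_\rho f_k\|_p$ and $\|{\sup_{\rho>0}}^{+}M_\rho f\|_p$. (As a minor aside, $L_p(\mathcal N;\ell_\infty)$ is a Banach space for $p\geq1$, so it is the ordinary triangle inequality, not a quasi-triangle inequality.)
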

Now we are ready to prove the Cotlar inequality \eqref{cotlar norm}.

\begin{proof}[Proof of estimate \eqref{cotlar norm}.]
 Let $\varphi$ be a smooth, radial, radially decreasing non-negative
function which is supported in the ball $B(0,\frac{1}{2})$ and $\int_{\R^{d}}\varphi(x)dx=1$. We will use the notation $\varphi_\varepsilon(x)=\frac1{\varepsilon^d}\, \varphi(\frac x\varepsilon)$ for $x\in\R^d$. 
Fix $f\in L_p(\mathcal N)_+$, now we express $T_{\varepsilon}f$ as
\begin{align}\label{k1}
T_{\varepsilon}f=\varphi_{\varepsilon}\ast(Tf)+
(T_{\varepsilon}f-\varphi_{\varepsilon}\ast(Tf)).
\end{align}

By Lemma \ref{result of cxy}, we trivially have
\begin{align*}
\big\|{\sup_{\varepsilon>0}}^+\varphi_{\varepsilon}\ast(Tf)\big\|_p\lesssim \big\|{\sup_{\varepsilon>0}}^{+}M_{\varepsilon} (Tf)\big\|_p.
\end{align*}

On the other hand, we rewrite the difference in \eqref{k1} as
$$T_{\varepsilon}f(x)-\varphi_{\varepsilon}\ast(Tf)=(k\chi_{|\cdot|>\varepsilon}-\varphi_\varepsilon\ast k)\ast f.$$
It is easy to verify that (see e.g. \cite[Theorem 5.3.4]{Gra2008} )
\begin{align*}
|k\chi_{|\cdot|>\varepsilon}-\varphi_\varepsilon\ast k|\lesssim\psi_{\varepsilon},
\end{align*}
where
$$\psi(x)=\frac{1}{(1+|x|)^{d+\gamma}}$$
with $\gamma$ being the regularity index of the kernel.
By decomposing $k\chi_{|\cdot|>\varepsilon}-\varphi_\varepsilon\ast k$ into a linear combination of four positive functions, one gets
\begin{align*}
\big\|{\sup_{\varepsilon>0}}^+(T_{\varepsilon}f-\varphi_{\varepsilon}\ast(Tf))\big\|_p&\lesssim \big\|{\sup_{\varepsilon>0}}^+|k\chi_{|\cdot|>\varepsilon}-\varphi_\varepsilon\ast k|\ast f\big\|_p\\
&\lesssim \big\|{\sup_{\varepsilon>0}}^+\psi_{\varepsilon}\ast f\big\|_p\lesssim \big\|{\sup_{\varepsilon>0}}^+M_\varepsilon f\big\|_p.
\end{align*}

Combining the above estimates, we get the desired Cotlar inequality \eqref{cotlar norm}.
\end{proof}

\section{Proof of Theorem \ref{t3}: maximal inequalities}
In this section, we prove the maximal inequalities stated in Theorem \ref{t3}. We start with the strong type $(p,p)$ estimate of $(T_{\Omega,\varepsilon})_{\varepsilon>0}$.
\subsection{Strong type $(p,p)$ estimate of $(T_{\Omega,\varepsilon})_{\varepsilon>0}$}
For convenience, we denote $k_{\Omega}(x)\triangleq{\Omega(x)}/{|x|^{d}}$. Without loss of generality, we may assume that $\Omega$ (and thus  $k_{\Omega}$) is real-valued. Since $\Omega$ on $S^{d-1}$ can be decomposed into its even and odd parts,
\begin{equation}\label{rough54}
\Omega_{e}(x)=\frac{1}{2}(\Omega(x)+\Omega(-x)),\ \ \Omega_{o}(x)=\frac{1}{2}(\Omega(x)-\Omega(-x)),
\end{equation}
it suffices to consider $\Omega$ with the cases of being odd and even, respectively.
We begin with the case of being odd.
\subsubsection{Case: $\Omega$ is odd}
As in the commutative case, the method of rotation will play a crucial role in the study of $T_{\Omega}$ and $(T_{\Omega,\varepsilon})_{\varepsilon>0}$. We first study the directional Hilbert transforms. Given a unit vector $\theta$ in $\R^{d}$, the directional Hilbert transform in the direction $\theta$ is defined as: for $f\in C^\infty_c(\mathbb R^d)\otimes \mathcal S_\mathcal M$,
\begin{equation}\label{rough1}
H_{\theta}f(x)=p.v.\frac{1}{\pi}\int_{\R}f(x-t\theta)\frac{dt}{t}.
\end{equation}
Likewise, we define the associated directional truncated Hilbert transform for $\varepsilon>0$,
\begin{equation}\label{rough2}
H_{\theta,\varepsilon}f(x)=\frac{1}{\pi}\int_{|t|>\varepsilon}f(x-t\theta)\frac{dt}{t}.
\end{equation}
\begin{lem}\label{rough4}\rm
Let $H_{\theta}$ and $H_{\theta,\varepsilon}$ be defined as in (\ref{rough1}) and (\ref{rough2}), respectively. Then for all $f\in L_{p}(\mathcal{N})$ with $1<p<\infty$
$$\big\|H_{\theta}f\big\|_{p}\lesssim\|f\|_{p}\ \ \ \mbox{and}\ \ \ \|{\sup_{\varepsilon>0}}^{+}H_{\theta,\varepsilon}f\|_{p}\lesssim\|f\|_{p}.$$
\end{lem}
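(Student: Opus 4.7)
My plan is to reduce Lemma \ref{rough4} to the already-known one-dimensional noncommutative theory, using two standard moves: rotation invariance to fix the direction, and the tensor structure of $\mathcal{N}$ to reinterpret the directional transform as a genuinely one-dimensional operator with operator-valued coefficients.

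First I would exploit rotation invariance. For any orthogonal $R$ on $\mathbb{R}^d$, the change of variable $U_R f(x)=f(R^{-1}x)$ extends to a trace-preserving $\ast$-automorphism of $\mathcal{N}=L_\infty(\mathbb{R}^d)\,\overline{\otimes}\,\mathcal{M}$, hence to a complete isometry of every $L_p(\mathcal{N})$ and of $L_p(\mathcal{N};\ell_\infty)$. A straightforward change of variable in \eqref{rough1} and \eqref{rough2} gives $H_{R\theta,\varepsilon}=U_R H_{\theta,\varepsilon} U_R^{-1}$, and similarly for $H_\theta$. Combining these, the norms appearing in the lemma are unchanged if $\theta$ is replaced by $R\theta$, so I may assume $\theta=e_1$.

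Second, I would identify $\mathcal{N}$ with $L_\infty(\mathbb{R})\,\overline{\otimes}\,\mathcal{M}'$ where $\mathcal{M}':=L_\infty(\mathbb{R}^{d-1})\,\overline{\otimes}\,\mathcal{M}$ is semifinite with the obvious trace. Under this identification, $H_{e_1}$ and $H_{e_1,\varepsilon}$ become the canonical extensions, to $L_p$ with values in $\mathcal{M}'$, of the classical one-dimensional Hilbert transform $H$ and its truncation $H_\varepsilon$, since convolution in the first variable commutes with the $\mathcal{M}'$-factor. The kernel $k(t)=1/(\pi t)$ is a standard real convolution Calder\'on-Zygmund kernel on $\mathbb{R}$: the size condition \eqref{1} is obvious, the $1$-Lipschitz condition \eqref{232} follows from the mean value theorem, and the cancellation \eqref{con4} holds trivially by oddness. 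Hence the first inequality of the lemma is Parcet's semi-commutative $L_p$-boundedness of convolution CZOs \cite{JP1, M}, applied with base algebra $\mathcal{M}'$, and the second inequality is exactly Theorem \ref{t1}(i) applied to $k$ with the same base algebra $\mathcal{M}'$.

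No substantial obstacle is expected; the only care needed is bookkeeping the identification $L_p(\mathcal{N};\ell_\infty)=L_p(L_\infty(\mathbb{R})\,\overline{\otimes}\,\mathcal{M}';\ell_\infty)$, which is immediate from associativity of the von Neumann tensor product and the definition of the $\ell_\infty$-valued noncommutative $L_p$-space for an arbitrary semifinite algebra recalled in Section 2. The key point that makes this reduction valid is that Theorem \ref{t1}(i) has been proved for any semifinite base algebra $\mathcal{M}$, and is therefore available with $\mathcal{M}$ replaced by $\mathcal{M}'$.
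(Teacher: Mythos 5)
Your proof is correct, and the overall strategy matches the paper's: use rotation invariance to reduce to $\theta=e_1$, then invoke the one-dimensional case already established in Theorem \ref{t1}(i). The difference lies in how the reduction from $\mathbb{R}^d$ to $\mathbb{R}$ is implemented. The paper argues fiber by fiber: it fixes $(x_2,\dots,x_d)$, views $f(\cdot,x_2,\dots,x_d)$ as an element of $L_p(L_\infty(\mathbb{R})\overline{\otimes}\mathcal{M})_+$, applies Theorem \ref{t1}(i) with base algebra $\mathcal{M}$ to produce a majorant $F(\cdot,x_2,\dots,x_d)$ via Remark \ref{rk:MaxFunct}, and then integrates out the remaining variables. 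You instead re-associate the tensor product, identifying $\mathcal{N}$ with $L_\infty(\mathbb{R})\overline{\otimes}\mathcal{M}'$ where $\mathcal{M}'=L_\infty(\mathbb{R}^{d-1})\overline{\otimes}\mathcal{M}$, and apply Theorem \ref{t1}(i) once with the enlarged base algebra $\mathcal{M}'$. Both routes are legitimate, but yours is cleaner: it makes explicit the fact (used implicitly by the paper) that Theorem \ref{t1} holds for an arbitrary semifinite base algebra, and it sidesteps the measurable-selection issue lurking in the fiber approach --- namely whether the fiberwise majorants $F(\cdot,x_2,\dots,x_d)$, whose existence is only asserted pointwise in $(x_2,\dots,x_d)$, can be assembled into a single element of $L_p(\mathcal{N})$. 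Your approach produces a single majorant in $L_p(L_\infty(\mathbb{R})\overline{\otimes}\mathcal{M}')\cong L_p(\mathcal{N})$ in one stroke, so no such concern arises.
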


The above result should be known to experts, and for the sake of completeness, we give a sketch of the proof.

\begin{proof}
Let $e_{1}=(1,0,\dotsm,0)$ be the unit vector. Then it is easy to see that the operator $H_{e_{1}}$ can be written as $H\otimes id_{L_\infty(\mathbb R^{d-1})}$ the tensor product of the usual Hilbert transform with identity map on functions on $\mathbb R^{d-1}$. Thus, $H_{e_{1}}$ is bounded on $L_{p}(\mathcal{N})$ with norm equal to the completely bounded norm of the usual Hilbert transform on $L_p$ (see e.g. \cite[Theorem A]{JP1}). Observing that for any orthogonal matrix $A$, the following identity holds
\begin{equation}\label{rough}
H_{A(e_{1})}f(x)=H_{e_{1}}(f\circ A)(A^{-1}x).
\end{equation}
This implies that the $L_{p}$ boundedness of $H_{\theta}$ can be reduced to that of $H_{e_{1}}$, which proves the first inequality.

Next, we consider the second inequality. Clearly, identity (\ref{rough}) is also valid for $H_{\theta,\varepsilon}$. Consequently, it suffices to show that $(H_{e_1,\varepsilon})_{\varepsilon>0}$ is of strong type $(p,p)$.
Let $f\in L_p(\mathcal{N})$. Without loss of generality, we may assume that $f$ is positive. Fixing $x_2,...,x_d\in\R$, we consider $f(\cdot,x_2,...,x_d)$ as a function in $L_p(L_\infty(\R)\overline{\otimes}\M)_{+}$. By Theorem \ref{t1}, we know that for $1<p<\infty$
$$\big\|{\sup_{\varepsilon>0}}^+H_{\varepsilon}f(\cdot,x_{2},...,x_{d})\big\|_{L_{p}(L_\infty(\R)\overline{\otimes}\M)}
\lesssim\|f(\cdot,x_{2},...,x_{d})\|_{L_{p}(L_\infty(\R)\overline{\otimes}\M)}.$$
This implies that, by Remark \ref{rk:MaxFunct}, there exists a positive function $F(\cdot,x_2,\cdots,x_d)\in L_p(L_\infty(\R)\overline{\otimes}\M)$ such that for any $\varepsilon>0$
\begin{eqnarray*}
-F(x)\leq(H_\varepsilon\otimes id_{L_\infty(\mathbb R^{d-1})})f(x)= H_{e_1,\varepsilon}f(x)\leq F(x),
\end{eqnarray*}
$$\\
\tau\int_{\R}|F(x_1,x_2,\cdots,x_d)|^{p}dx_1  \lesssim \tau\int_{\R}|f(x_1,x_2,\cdots,x_d)|^{p}dx_1.$$
Then it is easy to see that
$$\|F\|_{L_p(\mathcal{N})}\lesssim\|f\|_{L_p(\mathcal{N})}.$$
Therefore, we conclude that $(H_{e_1,\varepsilon})_{\varepsilon>0}$ is of strong type $(p,p)$.
\end{proof}

\begin{prop}\label{rough18}\rm
If $\Omega$ is odd and integrable over $S^{d-1}$, then $(T_{\Omega,\varepsilon})_{\varepsilon>0}$ is of strong type $(p,p)$ for all $1<p<\infty$.
\end{prop}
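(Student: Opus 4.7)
The plan is to use the classical method of rotations, reducing the maximal operator $(T_{\Omega,\varepsilon})_{\varepsilon>0}$ to an average of directional truncated Hilbert transforms for which the uniform strong type $(p,p)$ estimate is already available through Lemma \ref{rough4}. By decomposing $\Omega$ into its real and imaginary parts we may assume $\Omega$ is real-valued, so that the kernel of $T_{\Omega,\varepsilon}$ is real.

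First I would pass to polar coordinates $y=x-r\theta$ with $r>0$ and $\theta\in S^{d-1}$ to rewrite, for reasonable $f$,
$$T_{\Omega,\varepsilon} f(x) = \int_{S^{d-1}}\Omega(\theta)\int_{\varepsilon}^{\infty}f(x-r\theta)\,\frac{dr}{r}\,d\sigma(\theta).$$
Performing the substitution $\theta\mapsto-\theta$ in half of this integral and using $\Omega(-\theta)=-\Omega(\theta)$, the two one-sided integrals combine into a principal value integral over $|t|>\varepsilon$, yielding the key identity
$$T_{\Omega,\varepsilon} f(x)=\frac{\pi}{2}\int_{S^{d-1}}\Omega(\theta)\,H_{\theta,\varepsilon}f(x)\,d\sigma(\theta),$$
where $H_{\theta,\varepsilon}$ is the directional truncated Hilbert transform defined in \eqref{rough2}. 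This identity is to be interpreted in $L_p(\mathcal N)$; since $f\in L_p(\mathcal N)$ and $\Omega\in L_1(S^{d-1})$, Lemma \ref{rough4} together with Fubini provides the necessary integrability.

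Next I would apply Minkowski's integral inequality in the Banach space $L_p(\mathcal N;\ell_\infty)$ (which is indeed a Banach space for $1\leq p<\infty$ as recalled in Section 2) to the sphere-valued integrand $\theta\mapsto(\Omega(\theta)H_{\theta,\varepsilon}f)_{\varepsilon>0}$. This gives
$$\bigl\|{\sup_{\varepsilon>0}}^{+}T_{\Omega,\varepsilon}f\bigr\|_{p}\leq\frac{\pi}{2}\int_{S^{d-1}}|\Omega(\theta)|\,\bigl\|{\sup_{\varepsilon>0}}^{+}H_{\theta,\varepsilon}f\bigr\|_{p}\,d\sigma(\theta).$$
Now invoke Lemma \ref{rough4}: the bound $\|{\sup_{\varepsilon>0}}^{+}H_{\theta,\varepsilon}f\|_{p}\lesssim\|f\|_{p}$ is uniform in $\theta$ because it is obtained by reduction to $H_{e_1,\varepsilon}$ via the rotation identity \eqref{rough}, which is isometric on $L_p(\mathcal N)$. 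Combining, we obtain
$$\bigl\|{\sup_{\varepsilon>0}}^{+}T_{\Omega,\varepsilon}f\bigr\|_{p}\lesssim\|\Omega\|_{L_1(S^{d-1})}\|f\|_{p},$$
which is the desired strong type $(p,p)$ estimate.

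The mildly delicate point is the rigorous justification of the vector-valued Minkowski step, i.e.\ measurability of $\theta\mapsto(H_{\theta,\varepsilon}f)_{\varepsilon>0}$ as an $L_p(\mathcal N;\ell_\infty)$-valued function on $S^{d-1}$ and interchange with the integral. For $f$ in a dense subspace (for instance $C_c^\infty(\R^d)\otimes\mathcal S_{\mathcal M}$) this can be established by hand via the factorization definition of $L_p(\mathcal N;\ell_\infty)$ and a standard approximation argument, after which one passes to the limit in $f\in L_p(\mathcal N)$ by density. Apart from this technical verification, the argument is a direct transcription of the classical rotation method to the semi-commutative setting, made possible by the already established maximal bound of Lemma \ref{rough4}.
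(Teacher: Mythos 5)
Your proposal is correct and follows essentially the same route as the paper: rewrite $T_{\Omega,\varepsilon}$ via polar coordinates and oddness as $\frac{\pi}{2}\int_{S^{d-1}}\Omega(\theta)H_{\theta,\varepsilon}f\,d\sigma(\theta)$, then apply the (integral) triangle inequality in $L_p(\mathcal N;\ell_\infty)$ together with the uniform-in-$\theta$ bound from Lemma~\ref{rough4}. The additional remarks you make about measurability of the $L_p(\mathcal N;\ell_\infty)$-valued integrand and passage to the limit are a worthwhile technical supplement, but the underlying argument is the one in the paper.
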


\begin{proof}
By switching to polar coordinates and the fact that $\Omega$ is odd, we get the following identities:
\begin{align*}
\int_{|y|>\varepsilon}k_{\Omega}(y)f(x-y)dy & =\int_{S^{d-1}}\Omega(\theta)\int_{\varepsilon}^{\infty}f(x-r\theta)\frac{dr}{r}d\theta\\& =-\int_{S^{d-1}}\Omega(\theta)\int_{\varepsilon}^{\infty}f(x+r\theta)\frac{dr}{r}d\theta,
\end{align*}
where the second equality is a consequence of the first one via the change of variables $\theta\rightarrow-\theta$ based on the fact that $\Omega$ is an odd function . Thus we obtain
\begin{align*}
\int_{|y|>\varepsilon}k_{\Omega}(y)f(x-y)dy & =\frac{1}{2}\int_{S^{d-1}}\Omega(\theta)\int_{\varepsilon}^{\infty}\frac{f(x-r\theta)-f(x+r\theta)}{r}
drd\theta\\& =\frac{\pi}{2}\int_{S^{d-1}}\Omega(\theta)H_{\theta,\varepsilon}f(x)d\theta.
\end{align*}
Now by the triangle inequality and  Lemma \ref{rough4}, we get that
$$\Big\|{\sup_{\varepsilon>0}}^{+}T_{\Omega,\varepsilon}f\Big\|_{p}\lesssim
\int_{S^{d-1}}|\Omega(\theta)|\Big\|{\sup_{\varepsilon>0}}^{+}H_{\theta,\varepsilon}f\Big\|_{p}d\theta
\lesssim\|f\|_{p}.$$
Therefore, we
obtain the announced result.
\end{proof}
\subsubsection{Case: $\Omega$ is even}
In this case, we need the directional Hardy-Littlewood maximal inequality.
\begin{lem}\label{rough13}\rm
Let $1<p\leq\infty$ and $\theta$ be a unit vector in $\mathbb R^d$. Then for any $f\in L_{p}(\mathcal{N})$,
$$\Big\|{\sup_{\varepsilon>0}}^{+}f_{\theta,\varepsilon}\Big\|_{p}\lesssim\|f\|_{p},$$
where
\begin{equation}\label{rough12}
f_{\theta,\varepsilon}(x)=\frac{1}{2\varepsilon}\int_{|r|\leq \varepsilon}f(x-r\theta)dr.
\end{equation}
\end{lem}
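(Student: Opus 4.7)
The plan is to mimic exactly the strategy used in the proof of Lemma \ref{rough4}: first reduce to the canonical direction $\theta=e_1$ via a rotation, then realize the one-dimensional directional average as a tensor of a $1$-dimensional Hardy--Littlewood average with the identity, and finally invoke Mei's noncommutative Hardy--Littlewood maximal inequality fibrewise to build the dominating operator required by Remark \ref{rk:MaxFunct}.

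First, for an arbitrary unit vector $\theta$, I pick any orthogonal matrix $A$ with $A(e_1)=\theta$. Since composition with $A$ is a trace-preserving $\ast$-automorphism of $\mathcal{N}=L_\infty(\mathbb R^d)\overline{\otimes}\mathcal M$, it is an isometry on every $L_p(\mathcal N)$ and on the maximal space $L_p(\mathcal N;\ell_\infty)$. A direct change of variables gives
\begin{align*}
f_{\theta,\varepsilon}(x)=\frac{1}{2\varepsilon}\int_{|r|\leq\varepsilon}f(x-rA(e_1))\,dr=(f\circ A)_{e_1,\varepsilon}(A^{-1}x),
\end{align*}
so it is enough to prove the inequality for $\theta=e_1$.

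For $\theta=e_1$, the operator $f\mapsto f_{e_1,\varepsilon}$ is precisely $M^{(1)}_\varepsilon\otimes\mathrm{id}_{L_\infty(\mathbb R^{d-1})\overline{\otimes}\mathcal M}$, where $M^{(1)}_\varepsilon$ denotes the $1$-dimensional Hardy--Littlewood averaging operator (cf.\ \eqref{e:20Mr}). The strategy is now to freeze the last $d-1$ coordinates and apply Mei's noncommutative Hardy--Littlewood maximal inequality \cite{M} in dimension one to the $L_\infty(\mathbb R)\overline{\otimes}\mathcal M$-valued slice $f(\cdot,x_2,\dots,x_d)$. We may assume $f$ is self-adjoint (splitting into real and imaginary parts). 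Mei's result combined with Remark \ref{rk:MaxFunct} produces, for almost every $(x_2,\dots,x_d)$, a positive operator $F(\cdot,x_2,\dots,x_d)\in L_p(L_\infty(\mathbb R)\overline{\otimes}\mathcal M)$ such that for all $\varepsilon>0$,
\begin{align*}
-F(\cdot,x_2,\dots,x_d)\leq (M^{(1)}_\varepsilon\otimes\mathrm{id})f(\cdot,x_2,\dots,x_d)\leq F(\cdot,x_2,\dots,x_d),
\end{align*}
together with the slicewise estimate
\begin{align*}
\tau\int_{\mathbb R}|F(x_1,x_2,\dots,x_d)|^p\,dx_1\lesssim \tau\int_{\mathbb R}|f(x_1,x_2,\dots,x_d)|^p\,dx_1.
\end{align*}
Integrating in the remaining variables via Fubini yields $\|F\|_{L_p(\mathcal N)}\lesssim \|f\|_{L_p(\mathcal N)}$, and the two-sided bound on $f_{e_1,\varepsilon}$ together with Remark \ref{rk:MaxFunct} gives the desired $L_p(\mathcal N;\ell_\infty)$ estimate. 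The case $p=\infty$ is immediate from Jensen's inequality on positive parts and splitting into self-adjoint pieces.

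The only mildly delicate point, rather than a true obstacle, is the measurable selection of the dominating operator $F(\cdot,x_2,\dots,x_d)$ so that Fubini applies; this is a routine measurability issue already implicitly used in the proof of Lemma \ref{rough4}, and can be handled exactly as there, so no new technique is required.
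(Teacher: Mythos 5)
Your proof is correct and is precisely the argument the paper has in mind: the paper omits the proof of Lemma \ref{rough13} with the remark that it follows "by the same argument as that for the directional Hilbert transform," i.e.\ by rotation to the $e_1$ direction, the tensor identification $f_{e_1,\varepsilon}=M^{(1)}_\varepsilon\otimes\mathrm{id}$, and a slicewise application of Mei's noncommutative Hardy--Littlewood maximal inequality followed by Fubini, which is exactly what you do. The only cosmetic remark is that one may reduce to $f\geq 0$ (rather than merely self-adjoint) as in the proof of Lemma \ref{rough4}, since the directional averaging operator is positivity-preserving; and for $p=\infty$ the bound is immediate since $-\|f\|_\infty\mathbf{1}_{\mathcal N}\leq f_{\theta,\varepsilon}\leq\|f\|_\infty\mathbf{1}_{\mathcal N}$.
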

This result can be obtained by the same argument as that for directional Hilbert transform, which should be also known to experts. We omit the details.

\begin{prop}\label{rough89}\rm
Let $\Omega$ be an even function on $S^{d-1}$ with mean value zero and $\Omega\in L\log^{+}L(S^{d-1})$. Then for any $f\in L_{p}(\mathcal{N})$ with $1<p<\infty$,
$$\Big\|{\sup_{\varepsilon>0}}^{+}T_{\Omega,\varepsilon}f\Big\|_{p}\lesssim\|f\|_{p}.$$
\end{prop}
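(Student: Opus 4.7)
The plan is to reduce the even case to the odd case (handled by Proposition \ref{rough18}) via a Riesz-transform decomposition of $T_\Omega$, and then to pass from the operator $T_\Omega$ to its maximal truncations by the mollification trick already employed in the proof of Theorem \ref{t1}(i).

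First I would establish $L_p(\mathcal N)$ boundedness of $T_\Omega$ itself. Since $\Omega$ is even with mean zero and in $L\log^+L(S^{d-1})$, one decomposes $\Omega$ dyadically according to the size of $|\Omega|$ as $\Omega=\sum_k \Omega_k$, with the Orlicz norm yielding $\sum_k k\|\Omega_k\|_{L_1}\lesssim \|\Omega\|_{L\log^+L}$. Each piece is then expressed (by spherical harmonic expansion or by the identity $\Omega_k(\theta)=\sum_{j=1}^d \theta_j\widetilde\Omega_{j,k}(\theta)$ with $\widetilde\Omega_{j,k}=\theta_j\Omega_k$ odd) as a finite linear combination of compositions of Riesz transforms with singular integrals whose kernels are odd. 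The Riesz transforms are convolution CZOs with Lipschitz kernels satisfying \eqref{1}, \eqref{232} and \eqref{con4}, hence bounded on $L_p(\mathcal N)$ by Theorem \ref{t1}(i); the odd-kernel operators are bounded on $L_p(\mathcal N)$ by the $L_p$-estimate implicit in Proposition \ref{rough18}, with constant controlled by the $L_1(S^{d-1})$-norm of the kernel. Summing these pieces, weighted by the $L\log^+L$ decomposition of $\Omega$, yields $\|T_\Omega f\|_p\lesssim \|f\|_p$.

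Next, to recover the maximal estimate, I would imitate the Cotlar-norm argument in the proof of Theorem \ref{t1}(i). Taking a smooth radial nonnegative bump $\varphi$ of unit mass supported in $B(0,1/2)$, write
\[
T_{\Omega,\varepsilon} f = \varphi_\varepsilon * T_\Omega f + \bigl(k_\Omega\chi_{|\cdot|>\varepsilon} - \varphi_\varepsilon * k_\Omega\bigr) * f.
\]
For the first summand, Lemma \ref{result of cxy} applied with $\psi=\varphi$ combined with Mei's noncommutative Hardy--Littlewood maximal inequality and the just-established $L_p$-boundedness of $T_\Omega$ gives
$$\bigl\|{\textstyle\sup_{\varepsilon>0}}^+ \varphi_\varepsilon * T_\Omega f\bigr\|_p \lesssim \|T_\Omega f\|_p \lesssim \|f\|_p.$$
For the error term, the kernel $k_\Omega\chi_{|\cdot|>\varepsilon}-\varphi_\varepsilon*k_\Omega$ is supported essentially in $\{|x|\lesssim\varepsilon\}$; after decomposition into positive pieces and passage to polar coordinates $y=r\theta$, it is pointwise dominated in $\theta$ by $|\Omega(\theta)|$ times the characteristic function of a radial band of thickness comparable to $\varepsilon$. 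Convolving with $f$ and applying Minkowski's inequality in the $\theta$-integration, the resulting bound is $\lesssim \int_{S^{d-1}}|\Omega(\theta)|\,{\sup_{\varepsilon>0}}^+ f_{\theta,\varepsilon}\,d\theta$ in the sense of \eqref{rough12}, whose $L_p(\mathcal N)$-norm is controlled by $\|\Omega\|_{L_1(S^{d-1})}\|f\|_p$ using Lemma \ref{rough13} (which requires that $\|{\sup_\varepsilon}^+f_{\theta,\varepsilon}\|_p$ be uniform in $\theta$).

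The main obstacle lies in the first step: realizing the Riesz-transform decomposition of the even kernel in the noncommutative setting with norms summable against $\|\Omega\|_{L\log^+L}$. In the classical setting this is due to Calder\'on--Zygmund, but transferring it to $L_p(\mathcal N)$ requires that one tracks all constants through the complete (not merely bounded) $L_p(\mathcal N)$-estimates of the iterated Riesz transforms and of the odd-kernel pieces; equivalently, one must verify the sharp growth $O(k^\alpha)$ of the $L_p$ operator norms for spherical-harmonic pieces of degree $k$. Once this quantitative decomposition is in place, everything else is a direct transcription of the commutative argument, using the noncommutative tools (Lemma \ref{result of cxy}, Lemma \ref{rough13}, Mei's maximal inequality, and Proposition \ref{rough18}) already at our disposal.
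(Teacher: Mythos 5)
Your second step contains a genuine gap, and it is the one that actually drives the argument. You want to imitate the Cotlar trick from the proof of Theorem~\ref{t1}(i): write $T_{\Omega,\varepsilon}f = \varphi_\varepsilon * T_\Omega f + E_\varepsilon * f$ with $E_\varepsilon = k_\Omega\chi_{|\cdot|>\varepsilon} - \varphi_\varepsilon * k_\Omega$, and claim that $E_\varepsilon$ is ``supported essentially in $\{|x|\lesssim\varepsilon\}$'' (a thin radial band). That claim is false for rough kernels. For $|x| > \tfrac{3}{2}\varepsilon$ one has $E_\varepsilon(x) = \int \varphi_\varepsilon(z)\bigl(k_\Omega(x) - k_\Omega(x-z)\bigr)\,dz$; when $\Omega$ is $\gamma$-Lipschitz this is $O(\varepsilon^\gamma/|x|^{d+\gamma})$ and the argument of Theorem~\ref{t1}(i) goes through, but for $\Omega\in L\log^+L(S^{d-1})$ there is no pointwise gain over $|k_\Omega(x)|\sim|\Omega(x/|x|)|/|x|^d$ in the far field. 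So $E_\varepsilon$ has the same tail as $k_\Omega$ itself, the maximal estimate for $E_\varepsilon * f$ is essentially the statement you are trying to prove, and the mollification trick is circular. Your step 1 (proving $L_p$-boundedness of $T_\Omega$ alone, by a quantitative spherical-harmonic decomposition) is plausible but incomplete as you acknowledge; yet even if it were carried out, step 2 would not recover the maximal estimate from it.

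The paper avoids this by \emph{not} mollifying $T_\Omega f$. It instead smooths the \emph{truncation}: with $\varphi$ vanishing near $0$ and equal to $1$ outside $B(0,3/4)$, it sets $\widetilde T_{\Omega,\varepsilon}f = (k_\Omega\,\varphi(\cdot/\varepsilon))*f$. The difference $\widetilde T_{\Omega,\varepsilon} - T_{\Omega,\varepsilon}$ then really is convolution against a kernel supported in the annulus $\{\varepsilon/4 \le |y| \le \varepsilon\}$, which in polar coordinates is dominated by $|\Omega(\theta)|$ times a directional average of thickness $\sim\varepsilon$, and Lemma~\ref{rough13} handles it. The remaining maximal operator $\sup_\varepsilon{}^+\widetilde T_{\Omega,\varepsilon}$ is treated directly at the truncated level, not by first proving $L_p$-boundedness of $T_\Omega$: using $-\sum_j R_j^2 = I$ and homogeneity, $\widetilde T_{\Omega,\varepsilon}f = -\sum_j \varepsilon^{-d}s_j(\cdot/\varepsilon) * R_j f$ where $s_j$ is the $j$-th Riesz transform of $k_\Omega\varphi$. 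This is then split into $N_{1,\varepsilon}+N_{2,\varepsilon}+N_{3,\varepsilon}$ according to $|x-y|\le\varepsilon$ (handled by the directional maximal Lemma~\ref{rough13} and Grafakos's pointwise bound $|s_j|\le g_j$ near the origin with $\|g_j\|_{L_1(S^{d-1})}\lesssim \|\Omega\|_{L\log^+L}+1$), the difference $s_j-k_j$ on $|x-y|>\varepsilon$ (which decays like $|z|^{-d-1}$ and is handled by Lemma~\ref{result of cxy}), and the odd-kernel piece $k_j$ on $|x-y|>\varepsilon$ (handled by Proposition~\ref{rough18}). The $L\log^+L$ hypothesis enters precisely through the Grafakos estimates on $\Omega_j$ and $g_j$. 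You should replace the operator-mollification step with this kernel-truncation device, and carry out the Riesz decomposition at the level of $\widetilde T_{\Omega,\varepsilon}$ rather than of $T_\Omega$.
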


\begin{proof}
Let $\varphi$ a smooth radial function such that $\varphi(x)=0$ for $|x|\leq\frac{1}{4}$, $\varphi(x)=1$ for $|x|\geq\frac{3}{4}$, and $0\leq \varphi(x)\leq1$ for all $x\in\R^{d}$. Fix $f\in L_{p}(\mathcal{N})$; without loss of generality, we may assume that $f$ is positive. We define the smoothly truncated singular integral as
$$\widetilde{T}_{\Omega,\varepsilon}f(x)=\int_{\R^{d}}k_{\Omega}(y)\varphi\Big(\frac{y}{\varepsilon}\Big)
f(x-y)dy.$$
By the support of $\varphi$, we deduce that
\begin{align*}
\widetilde{T}_{\Omega,\varepsilon}f(x)-T_{\Omega,\varepsilon}f(x) & =\int_{\frac{\varepsilon}{4}\leq|y|\leq\frac{3\varepsilon}{4}}k_{\Omega}(y)\varphi\Big(\frac{y}{\varepsilon}\Big)
f(x-y)dy\\& \leq\int_{S^{d-1}}|\Omega(\theta)|\big[\frac{4}{\varepsilon}
\int_{\frac{\varepsilon}{4}}^{\frac{3}{4}\varepsilon}f(x-r\theta)dr\big]d\theta\\
&\lesssim \int_{S^{d-1}}|\Omega(\theta)|f_{\theta,\varepsilon}(x)d\theta.
\end{align*}
On the other hand, we have
$$-\int_{S^{d-1}}|\Omega(\theta)|f_{\theta,\varepsilon}(x)d\theta\lesssim
\widetilde{T}_{\Omega,\varepsilon}f(x)-T_{\Omega,\varepsilon}f(x).$$
Thus by Lemma \ref{rough13}, one gets
$$\Big\|{\sup_{\varepsilon>0}}^{+}(\widetilde{T}_{\Omega,\varepsilon}f-
T_{\Omega,\varepsilon}f)\Big\|_{p}\lesssim\|\Omega\|_{1}\big\|{\sup_{\varepsilon>0}}^{+}
f_{\theta,\varepsilon}\big\|_{p}\lesssim\|f\|_{p}.$$
Thus, to finish the proof, by rewriting
$$T_{\Omega,\varepsilon}f(x)=T_{\Omega,\varepsilon}f(x)-\widetilde{T}_{\Omega,\varepsilon}f(x)+\widetilde{T}_{\Omega,\varepsilon}f(x),$$
it suffices to consider the required strong type $(p,p)$ estimate for the smoothly truncated singular operator $(\widetilde{T}_{\Omega,\varepsilon})_{\varepsilon>0}$.

The identity property of the Riesz transforms $-\sum_{j=1}^dR_j^2=I$ will play a key role. Let $s_{j}$ (resp. $k_{j}$) be the $j$-th Riesz transform of $k_{\Omega}\varphi$ (resp. $k_{\Omega}$).
The identity property implies
$$\widetilde{T}_{\Omega,\varepsilon}f(x)=\int_{\R^{d}}\frac{1}{\varepsilon^{d}}
k_{\Omega}\Big(\frac{y}{\varepsilon}\Big)\varphi\Big(\frac{y}{\varepsilon}\Big)f(x-y)dy=-\Big(\sum_{j=1}^{d}
\frac{1}{\varepsilon^{d}}s_{j}\Big(\frac{\cdot}{\varepsilon}\Big)\ast R_{j}f\Big)(x),$$
where in the second equality we also used 
the homogeneity of $R_j$.
Therefore, we are able to decompose $\widetilde{T}_{\Omega,\varepsilon}f(x)$ as
$$-\widetilde{T}_{\Omega,\varepsilon}f(x)=\sum_{j=1}^{d}\int_{\R^{d}}
\frac{1}{\varepsilon^{d}}s_{j}\Big(\frac{x-y}{\varepsilon}\Big)R_{j}f(y)dy\triangleq N_{1,\varepsilon}f(x)+N_{2,\varepsilon}f(x)+N_{3,\varepsilon}f(x),$$
where
\begin{eqnarray*}
N_{1,\varepsilon}f(x)& = &\sum_{j=1}^{d}\frac{1}{\varepsilon^{d}}\int_{|x-y|\leq\varepsilon}s_{j}\Big(\frac{x-y}{\varepsilon}\Big) R_{j}f(y)dy,\\
N_{2,\varepsilon}f(x) & = & \sum_{j=1}^{d}\frac{1}{\varepsilon^{d}}\int_{|x-y|>\varepsilon}\Big[
s_{j}\Big(\frac{x-y}{\varepsilon}\Big)-k_{j}\Big(\frac{x-y}{\varepsilon}\Big)\Big]R_{j}f(y)dy, \\
N_{3,\varepsilon}f(x) & = & \sum_{j=1}^{d}\frac{1}{\varepsilon^{d}}\int_{|x-y|>\varepsilon}k_{j}\Big(\frac{x-y}{\varepsilon}\Big) R_{j}f(y)dy.
\end{eqnarray*}

We first deal with $N_{2,\varepsilon}f$. It follows from the definitions of $s_{j}$ and $k_{j}$ that
\begin{align*}
s_{j}(z)-k_{j}(z) & =c_{d}\lim_{\varepsilon\rightarrow0}\int_{|y|\geq\varepsilon}k_{\Omega}(y)\big(\varphi(y)-1\big)
\frac{z_{j}-y_{j}}{|z-y|^{d+1}}dy\\
&= c_{d}\int_{|y|\leq\frac{3}{4}}k_{\Omega}(y)\big(\varphi(y)-1\big)
\Big\{\frac{z_{j}-y_{j}}{|z-y|^{d+1}}-\frac{z_{j}}{|z|^{d+1}}\Big\}dy
\end{align*}
whenever $|z|\geq1$. By using the mean value theorem, we see that the last expression above can be controlled by
$$c_{d}\int_{|y|\leq\frac{3}{4}}|k_{\Omega}(y)|\frac{|y|}{|z|^{d+1}}dy=
c'_{d}\|\Omega\|_{1}|z|^{-(d+1)},$$
whenever $|z|\geq1$. By applying this estimate, we get that the j-th term in $N_{2,\varepsilon}f(x)$ is bounded by
$$c'_{d}\frac{\|\Omega\|_{1}}{\varepsilon^{d}}\int_{|x-y|>\varepsilon}\frac{|R_{j}f(y)|dy}
{\big(\frac{|x-y|}{\varepsilon}\big)^{d+1}}\leq c'_{d}\frac{2\|\Omega\|_{1}}{2^{-d}\varepsilon^{d}}\int_{\R^{d}}\frac{|R_{j}f(y)|dy}
{\big(1+\frac{|x-y|}{\varepsilon}\big)^{d+1}}.$$
Clearly, the j-th term in $N_{2,\varepsilon}f(x)$ also has the lower bound
$$-c'_{d}\frac{2\|\Omega\|_{1}}{2^{-d}\varepsilon^{d}}\int_{\R^{d}}\frac{|R_{j}f(y)|dy}
{\big(1+\frac{|x-y|}{\varepsilon}\big)^{d+1}}.$$
Applying Lemma \ref{result of cxy} and the fact that the Riesz transform $R_j$ is bounded on $L_p(\mathcal{N})$, we get
\begin{equation}\label{rough16}
\|{\sup_{\varepsilon>0}}^{+}N_{2,\varepsilon}f\|_{p}\lesssim
\sum_{j=1}^d\||R_{j}f|\|_{p}=\sum_{j=1}^d\|R_{j}f\|_{p}\lesssim\|f\|_{p}.
\end{equation}

Next we consider the term $N_{3,\varepsilon}f$. It is already shown in \cite[Theorem 5.2.10]{Gra2008} that for any $1\leq j\leq d$
$$k_{j}(x)={\Omega_{j}(x)}/{|x|^{d}},$$
where $\Omega_{j}$ is some homogeneous integrable odd function on $S^{d-1}$ satisfying
$$\|\Omega_{j}\|_{1}\leq c_{d}\Big(\int_{S^{d-1}}|\Omega(\theta)|\log^{+}|\Omega(\theta)|d\theta+1\Big),$$
where $\log^{+}s=\log s$ if $s\geq1$ and $\log^{+}s=0$ if $0\leq s<1$. Consequently,
$$N_{3,\varepsilon}f(x)= \sum_{j=1}^{d}\int_{|x-y|>\varepsilon}\frac{\Omega_{j}(x-y)}{|x-y|^{d}}R_{j}f(y)dy.$$
Now we use Theorem \ref{rough18} and the fact that the Riesz transform is bounded on $L_p(\mathcal{N})$ to conclude that
\begin{equation}\label{rough17}
\Big\|{\sup_{\varepsilon>0}}^{+}N_{3,\varepsilon}f\Big\|_{p}\lesssim
\sum_{j=1}^d\|R_{j}f\|_{p}\lesssim\|f\|_{p}.
\end{equation}

Finally, we turn our attention to the term $N_{1,\varepsilon}f$. By \cite[Theorem 5.2.10]{Gra2008}, for each $1\leq j\leq d$, there exists a non-negative homogeneous of degree zero function $g_{j}$ on $\R^{d}$ satisfying
\begin{equation}\label{rough168977}
|s_{j}(x)|\leq g_{j}(x)\ \ \ \mbox{when}\ |x|\leq1
\end{equation}
and
\begin{equation}\label{rough1677}
\int_{S^{d-1}}|g_{j}(\theta)|d\theta\leq c_{d}\Big(\int_{S^{d-1}}|\Omega(\theta)|\log^{+}|\Omega(\theta)|d\theta+1\Big).
\end{equation}
Therefore, we get
\begin{align*}
N_{1,\varepsilon}f(x) & \lesssim \sum_{j=1}^{d}\frac{1}{\varepsilon^{d}}\int_{|z|\leq\varepsilon}|s_{j}(z)||R_{j}f(x-z)|dz\\
&=\sum_{j=1}^{d}\frac{1}{\varepsilon^{d}}\int_{0}^{\varepsilon}\int_{S^{d-1}}|s_{j}(r\theta)|
|R_{j}f(x-r\theta)|r^{d-1}d\theta dr\\
&\leq \sum_{j=1}^{d}\int_{S^{d-1}}|g_{j}(\theta)|\frac{1}{\varepsilon^{d}}
\int_{0}^{\varepsilon}|R_{j}f(x-r\theta)|r^{d-1}drd\theta\\
&\leq \sum_{j=1}^{d}\int_{S^{d-1}}|g_{j}(\theta)|\frac{1}{\varepsilon}
\int_{0}^{\varepsilon}|R_{j}f(x-r\theta)|drd\theta.
\end{align*}
where the second inequality follows from the homogeneity of $s_{j}$ and (\ref{rough168977}). On the other hand, it is easy to verify that
$$-\sum_{j=1}^{d}\int_{S^{d-1}}|g_{j}(\theta)|\frac{1}{\varepsilon}
\int_{0}^{\varepsilon}|R_{j}f(x-r\theta)|drd\theta\lesssim N_{1,\varepsilon}f(x).$$
Hence by Lemma \ref{rough13}, (\ref{rough1677}) and the $L_p$ boundedness of $R_j$, we arrive at
\begin{equation}\label{rough19}
\|{\sup_{\varepsilon>0}}^{+}N_{1,\varepsilon}f\|_{p}\lesssim
\sum_{j=1}^d\||R_{j}f|\|_{p}=\sum_{j=1}^d\|R_{j}f\|_{p}\lesssim\|f\|_{p}.
\end{equation}
Finally, combining (\ref{rough16}), (\ref{rough17}) and (\ref{rough19}), we get the desired strong type $(p,p)$ estimate of $(\widetilde{T}_{\Omega,\varepsilon})_{\varepsilon>0}$. This completes the proof.
\end{proof}

Combining Proposition \ref{rough18} and Proposition \ref{rough89}, we get Theorem \ref{t3} (i).

\subsection{Weak type $(1,1)$ estimate of $(T_{\Omega,\varepsilon})_{\varepsilon>0}$}
\begin{lem}\label{rough8998}\rm
The kernel $k_\Omega$ given in Theorem \ref{t3} satisfies the smoothness condition (\ref{6}) with $q=2$. More precisely,
$$\sum_{m=1}^\infty\delta_2(m)\lc\int_0^1\frac{\omega_2(s)}{s}ds+\|\Omega\|_{L_2(S^{d-1})}.$$
\end{lem}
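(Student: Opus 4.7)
Since $k_\Omega$ is of convolution type, after changing variables the quantity in \eqref{5} with $q=2$ becomes
$$\delta_2(m)^2 = \sup_{R>0,\,|v|\le R}\,(2^mR)^d\int_{2^mR\le |x|\le 2^{m+1}R}|k_\Omega(x-v)-k_\Omega(x)|^2\,dx.$$
Fix such $R$ and $v$, and write $r=2^mR$. The plan is to split the kernel difference into a ``radial'' piece and an ``angular'' piece via
$$k_\Omega(x-v)-k_\Omega(x)=\Omega(x-v)\Bigl(\tfrac{1}{|x-v|^d}-\tfrac{1}{|x|^d}\Bigr)+\tfrac{1}{|x|^d}\bigl(\Omega(x-v)-\Omega(x)\bigr),$$
and to estimate each summand separately on the annulus $r\le|x|\le 2r$, using that $|v|/r\le 2^{-m}\le 1/2$ so $|x-v|\approx|x|$.

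For the radial piece, the mean value theorem gives $\bigl|\,|x-v|^{-d}-|x|^{-d}\bigr|\lesssim |v|/|x|^{d+1}$. Then, after the change of variables $u=x-v$ and a polar decomposition,
$$\int_{|x|\approx r}|\Omega(x-v)|^2\Big(\tfrac{|v|}{|x|^{d+1}}\Big)^2dx\lesssim \frac{|v|^2}{r^{2d+2}}\int_{|u|\approx r}|\Omega(u)|^2 du\lesssim \frac{|v|^2}{r^{d+2}}\|\Omega\|_{L_2(S^{d-1})}^2,$$
which after multiplication by $r^d$ contributes $\lesssim 2^{-2m}\|\Omega\|_{L_2(S^{d-1})}^2$.

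For the angular piece, the key observation, which should be the heart of the argument, is that homogeneity converts translation of the argument of $\Omega$ into a translation on the sphere. Writing $x=\rho\theta$ in polar coordinates and using that $\Omega$ is homogeneous of degree zero yields $\Omega(\rho\theta-v)-\Omega(\rho\theta)=\Omega(\theta-v/\rho)-\Omega(\theta)$, so that
$$\int_{|x|\approx r}\tfrac{|\Omega(x-v)-\Omega(x)|^2}{|x|^{2d}}\,dx\lesssim \frac{1}{r^{2d}}\int_{r}^{2r}\rho^{d-1}\int_{S^{d-1}}|\Omega(\theta-v/\rho)-\Omega(\theta)|^2 d\sigma(\theta)\,d\rho.$$
Since $|v/\rho|\le 2|v|/r\le 2^{-m+1}$, the definition of $\omega_2$ applied with $\alpha=-v/\rho$ bounds the inner integral by $\omega_2(2^{-m+1})^2$, so the whole expression is $\lesssim r^{-d}\omega_2(2^{-m+1})^2$, contributing $\lesssim \omega_2(2^{-m+1})^2$ after multiplication by $r^d$.

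Combining the two pieces gives $\delta_2(m)\lesssim \omega_2(2^{-m+1})+2^{-m}\|\Omega\|_{L_2(S^{d-1})}$. Summing in $m$, the geometric series handles the second term, while for the first, the monotonicity of $\omega_2$ together with $\int_{2^{-m}}^{2^{-m+1}}s^{-1}ds=\log 2$ implies $\omega_2(2^{-m+1})\lesssim \int_{2^{-m}}^{2^{-m+1}}\omega_2(2s)/s\,ds$, which telescopes to $\int_0^1\omega_2(s)/s\,ds$ up to a multiplicative constant. This yields the desired bound. The only delicate step is the angular estimate above, where one must correctly identify the parameter $\alpha=v/\rho$ as precisely the vector displacement controlled by $\omega_2$; the remaining computations are routine.
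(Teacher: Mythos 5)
Your proof is correct and follows essentially the same route as the paper: split $k_\Omega(x-v)-k_\Omega(x)$ into an angular piece (controlled via homogeneity by $\omega_2(2^{-m})$) and a radial piece (controlled via the mean value theorem by $2^{-m}\|\Omega\|_{L_2(S^{d-1})}$), then sum dyadically using monotonicity of $\omega_2$. The only cosmetic differences are which factor of $\Omega$ or $|x|^{-d}$ you keep in each piece and that you apply the mean value theorem to $t^{-d}$ before squaring (giving $2^{-m}$) whereas the paper applies $(a-b)^2\le|a^2-b^2|$ and the mean value theorem to $t^{-2d}$ (giving $2^{-m/2}$); both are summable, so this is immaterial.
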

\begin{proof}
Let $R>0$, $|u|\leq R$ and $m\in\mathbb N_{+}$. Then
\begin{equation}
\begin{split}
&\int_{2^mR\leq|x|\leq2^{m+1}R}|k_\Omega(x+u)-k_{\Omega}(x)|^2dx\\
&\lc\int_{2^mR\leq|x|\leq2^{m+1}R}|\Omega(x+u)-\Omega(x)|^2|x+u|^{-2d}dx\\
&\quad+\int_{2^mR\leq|x|\leq2^{m+1}R}|\Omega(x)|^2||x+u|^{-2d}-|x|^{-2d}|dx\\
&\triangleq J_1(m,R)+J_2(m,R).
\end{split}\end{equation}

We first consider $J_1(m,R)$. Since $|u|\leq R$ and $2^mR\leq|x|\leq2^{m+1}R$, we see that $2^{m-1}R\leq |x+u|\leq 2^{m+2}R$. By a change of variable $x=r\tet$, then using the fact that the kernel $\Omega$ is of zero homogeneity, we get
\begin{equation*}
J_1(m,R)\lesssim(2^mR)^{-2d}\int_{2^mR}^{2^{m+1}R}r^{d-1}\int_{S^{d-1}}|\Omega(\tet+u/r)-\Omega(\tet)|^2d\tet dr\lc \omega_{2}^2(2^{-m})(2^mR)^{-d}.
\end{equation*}

For the second term $J_2(m,R)$, using the mean value theorem, it is not difficult to get that
\begin{equation*}
J_2(m,R)\lc \|\Omega\|^2_{L_2(S^{d-1})}2^{-m}(2^mR)^{-d}.
\end{equation*}

Hence combining the estimates of $J_1(m,R)$ and $J_2(m,R)$, we see that
$
\delta_2(m)\lc\omega_{2}(2^{-m})+2^{-m/2}\|\Omega\|_{L_2(S^{d-1})}.
$
Therefore we obtain that
\begin{equation*}
\sum_{m=1}^\infty\delta_2(m)\lc\int_0^1\frac{\omega_2(s)}{s}ds+\|\Omega\|_{L_2(S^{d-1})},
\end{equation*}
where we used the fact that $\omega_2(s)$ is monotone increasing.
\end{proof}

Finally, we are ready to show the weak type $(1,1)$ estimate of $(T_{\Omega,\varepsilon})_{\varepsilon>0}$.

\begin{proof}[Proof of Theorem \ref{t3} (ii).]
Lemma \ref{rough8998} ensures that $k_{\Omega}$ satisfies the required regularity condition needed in Theorem \ref{p2}. As a consequence, we can finish the proof of the weak type $(1,1)$ estimate in the similar way as in the proof of Theorem \ref{p2}. The only difference is that when reducing to the lacunary subsequence in \eqref{max191}, Mei's Hardy-Littlewood maximal inequalities should be replaced by the generalized ones for rough kernels
\begin{align}\label{Lai}
\|(M_{\Omega,r}f)_{r>0}\|_{\Lambda_{1,\infty}(\mathcal N;\ell_\infty)}\lesssim \|f\|_1,\;\|(M_{\Omega,r}f)_{r>0}\|_{L_{p}(\mathcal N;\ell_\infty)}\lesssim \|f\|_p
\end{align}
for $1<p\leq\infty$, where
$$M_{\Omega,r}f(x)=\frac{1}{r^d}\int_{|x-y|\leq r}\Omega(x-y)f(y)dy.$$
These maximal inequalities \eqref{Lai} have been established by the second author in \cite{Lai} for the rough kernels which include the $L_{2}$-Dini assumption in Theorem \ref{t3} (ii). Thus using the same argument as for Theorem \ref{p2}, we complete the proof of Theorem \ref{t3} (ii).
\end{proof}

\section{Proof of pointwise convergence results}

In this section, we deal with the noncommutative pointwise convergence results stated in
Theorem \ref{t1} and Theorem \ref{t3}. To this end, we recall first the definition of bilaterally almost uniform convergence which can be viewed as the noncommutative analogue of almost everywhere convergence. The following definition of almost uniform convergence was introduced by Lance \cite{Lan76}.
\begin{defi}\label{3456}\rm
Let $\mathcal {M}$ be a von Neumann algebra equipped with a \emph{n.s.f} trace $\tau$. Let $x_k,x\in L_0(\M)$.
\begin{itemize}
\item[(i)]  $(x_k)$ is said to converge bilaterally almost uniformly (b.a.u. in short) to $x$ if for any $\delta>0$, there is a
projection $e\in \M$ such that
$$\tau(e^\bot)<\delta \ \ \  \mbox{and} \ \ \ \lim _{k\rightarrow\infty}\|e(x_k-x)e\|_\infty=0.$$

\item[(ii)]  $(x_k)$ is said to converge almost uniformly (a.u. in short) to $x$ if for any $\delta>0$, there is a projection $e\in \M$
such that
$$\tau(e^\bot)<\delta \ \ \  \mbox{and} \ \ \ \lim _{k\rightarrow\infty}\|(x_k-x)e\|_\infty=0.$$
\end{itemize}
\end{defi}
Obviously, $x_{k}\xrightarrow{\rm a.u}x$ implies $x_{k}\xrightarrow{\rm b.a.u}x$. Note that in the commutative case on probability space, both convergences in Definition \ref{3456}
are equivalent to the usual almost everywhere convergence in terms of Egorov's theorem.

In the following, we prove conclusion (iii) in Theorem \ref{t1}, while the treatment of conclusion (iii) stated in Theorem \ref{t3} is similar.


\begin{proof}[Proof of Theorem \ref{t1} (iii).]
Let $(\varepsilon_j)_{j\in\mathbb N}$ be the subsequence appeared in \eqref{can}. Let $f\in L_{p}(\mathcal{N})$. It is already known from the uniform boundedness principle that $T_{\varepsilon_j}f\rightarrow Tf$ in $L_p(\mathcal N)$ (in $L_{1,\infty}(\mathcal N)$ if $p=1$). Thus we only need to show the b.a.u convergence of $(T_{\varepsilon_j}f)_j$ as $j\rightarrow\infty$, since then the limit must be $Tf$ (see e.g. \cite{CXY13}). Then it is known that equivalently (see e.g. \cite[Proposition 1.3]{CL1}) it suffices to show that for any $\delta>0$, there exists a projection $e\in\mathcal N$ such that
\begin{align}\label{key est}
\varphi(e^{\perp})<\delta\ \mbox{and}\ \|e(T_{\varepsilon_k}f-T_{\varepsilon_\ell}f)e\|_{\infty}\rightarrow0\ \mbox{as}\ k,\ell\rightarrow\infty.
\end{align}

By the density of $C^\infty_c(\mathbb R^d)\otimes S_{\mathcal M}$ in $L_p(\mathcal N)$, for each $\varepsilon>0$, there exists $g$ of the form
$\sum_i\varphi_{i}\otimes m_{i}$ with finite sum of $i$, $\varphi_{i}\in C^\infty_c(\mathbb R^d)$ and $m_{i}\in S_{\mathcal M}$ such that
\begin{equation}\label{aucon3456}
\|f-g\|_{p}\leq\varepsilon.
\end{equation}
Fix $\delta>0$. For each $n\geq1$, by \eqref{aucon3456}, there exists $g_n$ such that $\|f-g_n\|_p^p\leq\frac{\delta}{2^n n^p}$. Next applying that $(T_{\varepsilon_{j}})_{j\in\N}$ is of weak type $(p,p)$ ($1\leq p<\infty$), there exists a projection $e_{n}\in \mathcal{N}$ such that
\begin{equation*}\sup_{j}\|e_{n}T_{\varepsilon_{j}} (f-g_{n})e_{n}\|_{\infty}<\frac1n\ \mbox{and}\
\varphi(e^\perp_{n})<n^p\|f-g_n\|_p^p\leq\frac{\delta}{2^{n}}.
\end{equation*}
Let $e=\wedge_{n}e_{n}$. Then we have
\begin{align}\label{err}\varphi(e^\perp)<\delta\ \mbox{and}\ \sup_{j}\|eT_{\varepsilon_{j}} (f-g_{n})e\|_{\infty}<\frac{1}{n},\ \forall\ n\geq1.\end{align}

On the other hand, we claim that
\begin{align}\label{uni}
\lim_{\ell,k\rightarrow+\infty}\|T_{\varepsilon_{\ell}}g_{n}-T_{\varepsilon_{k}}g_{n}\|_{\infty}=0,\ \forall\ n\geq1.
\end{align}
Indeed, for $g_n=\sum_i\varphi_{i}\otimes m_{i}$ with finite sum of $i$ and $\varepsilon_\ell<\varepsilon_k$, one has
\begin{align*}
T_{\varepsilon_{\ell}}g_{n}(x)-T_{\varepsilon_{k}}g_{n}(x) & = \sum_{i}\int_{\varepsilon_{\ell}<|y|\leq\varepsilon_{k}}k(y)\varphi_{i}(x-y)dy\otimes m_{i}\\
&=\sum_{i}\int_{\varepsilon_{\ell}<|y|\leq\varepsilon_{k}}k(y)\big(\varphi_{i}(x-y)-\varphi_{i}(x)\big)dy\otimes m_{i}\\
&\ \ \ + \sum_{i}\int_{\varepsilon_{\ell}<|y|\leq\varepsilon_{k}}k(y)\varphi_{i}(x)dy\otimes m_{i},
\end{align*}
which tends to 0 as $\ell,k\rightarrow+\infty$ by simple calculation using the size/cancellation condition (\ref{1})/ (\ref{can}) of the kernel.

The uniform convergence \eqref{uni} implies that for any $n\geq1$, there exists a positive constant $N_{n}$ such that for any $\ell,k>N_{n}$
$$\|T_{\varepsilon_{\ell}}g_{n}-T_{\varepsilon_{k}}g_{n}\|_{\infty}<\frac{1}{n}.$$
Then together with \eqref{err}, for any $\ell,k>N_{n}$,
\begin{align*}
\|e(T_{\varepsilon_{\ell}}f-T_{\varepsilon_{k}}f)e\|_{\infty} &\leq \|T_{\varepsilon_{\ell}}g_{n}-T_{\varepsilon_{k}}g_{n}\|_{\infty}+\|e
T_{\varepsilon_{\ell}}(f-g_{n})e\|_{\infty}
+\|eT_{\varepsilon_{k}}(f-g_n)e\|_{\infty}\\
&<\frac{3}{n},
\end{align*}
which yields
$$\lim_{\ell,k\rightarrow\infty}\|e(T_{\varepsilon_{\ell}}f-T_{\varepsilon_{k}}f)e\|_{\infty}=0.$$
This completes the proof.
\end{proof}

\begin{rk}\rm
It is worthy to point out that at the moment of writing we have no idea how to strengthen the b.a.u. convergence in Theorem \ref{t1} and Theorem \ref{t3} to a.u. convergence. One reason is that Calder\'on-Zygmund operators are not positive.
\end{rk}

\noindent {\bf Acknowledgements} \
We are very grateful to L\'{e}onard Cadilhac and \'{E}ric Ricard  for communicating to us the present version of Calder\'on-Zygmund decomposition as in Theorem \ref{czdecom}. In particular, we thank L\'{e}onard Cadilhac for providing us with the note with a proof of the weak type $(1,1)$ estimate of standard Calder\'on-Zygmund operator \cite{C2}. The first author would also like to thank Javier Parcet for the discussion around (\ref{cotlar norm}) when he worked in Madrid as a post-doctor.


\end{document}